\date{}
\title{
Factorization Norms and Hereditary Discrepancy
}
\newif\ifcmts
\cmtstrue
\newif\ifbigpic
\bigpictrue

\author{
{\sc Ji\v{r}\'{\i} Matou\v{s}ek}\thanks{Research 
supported by the  ERC Advanced Grant No.~267165.}
\\
   {\footnotesize Department of Applied Mathematics}\\[-1.5mm]
   {\footnotesize  Charles University, Malostransk\'{e} n\'{a}m. 25}\\[-1.5mm]
{\footnotesize  118~00~~Praha~1,
   Czech Republic, and}\\
{\footnotesize    Department of Computer Science}\\[-1.5mm]
{\footnotesize    ETH Zurich,
      8092 Zurich, Switzerland}
\and 
{\sc Aleksandar Nikolov}
\\
   {\footnotesize  Microsoft Research}\\[-1.5mm]
   {\footnotesize Redmond, WA, USA}
\and
{\sc Kunal Talwar}
\\
   {\footnotesize Google}\\
   {\footnotesize Mountain View, CA, USA}
}

\documentclass[11pt]{article}
\usepackage{a4}

\usepackage{latexsym}
\usepackage{amssymb,amsmath,amsthm}
\usepackage{graphicx}
\usepackage{url}
\usepackage{color}

\newtheorem{theorem}{Theorem}[section]

\newtheorem{prop}[theorem]{Proposition}

\newtheorem{lemma}[theorem]{Lemma}

\newcommand{\heading}[1]{\vspace{1ex}\par\noindent{\bf\boldmath #1}}

\newcommand{\R}{{\mathbb{R}}}

\newcommand{\N}{{\mathbb{N}}}

\newcommand\eps{\varepsilon}

\newcommand\FF{\mathcal{F}}
\newcommand\GG{\mathcal{G}}

\newcommand\PP{\mathcal{P}}
\newcommand\RR{\mathcal{R}}
\newcommand\CC{\mathcal{C}}
\newcommand\BB{\mathcal{B}}
\newcommand\II{\mathcal{I}}
\renewcommand\AA{\mathcal{A}}
\newcommand{\MM}{\mathcal{M}}
\newcommand\AP{\mathcal{AP}}

\def\:{\colon}

\long\def\onefigure#1#2{
\begin{figure*}[tbp]
\begin{center}
#1
\end{center}
\caption{#2}
\end{figure*}
}

\def\immediateFigure#1{%
\smallskip\begin{center}#1\end{center}\smallskip }

\newcommand{\labfig}[2]  
{\onefigure{\mbox{\includegraphics{#1}}}{\label{f:#1} #2} }

\newcommand{\labfigw}[3]  
{\onefigure{\mbox{\includegraphics[width=#2]{#1}}}{\label{f:#1} #3}}

\newcommand{\immfig}[1]  
{\immediateFigure{\mbox{\includegraphics{#1}}}}

\newcommand{\immfigw}[2] 
{\immediateFigure{\mbox{\includegraphics[width=#2]{Figures/#1}}}}

\DeclareMathOperator{\herdisc}{herdisc}
\DeclareMathOperator{\disc}{disc}

\DeclareMathOperator{\rdisc}{rdisc}

\DeclareMathOperator{\rank}{rank}
\DeclareMathOperator{\Tr}{Tr}
\DeclareMathOperator{\detlb}{detlb}
\DeclareMathOperator{\diag}{diag}
\newcommand\dd[1]{\,\mathrm{d}#1}

\newcommand{\enorm}[1]{\gamma_2(#1)}

\newcommand{\conv}[1]{C_{#1}} 

\newcommand{\cut}[1]{}

\ifcmts
\newcommand{\marrow}{\marginpar{\boldmath$\longleftarrow$}}
\newcommand{\jirka}[1]{\ifhmode\newline\fi\marrow \textsf{*** (Jirka: ) #1\newline}}
\newcommand{\sasho}[1]{\ifhmode\newline\fi\marrow \textsf{*** (Sasho:
    ) #1\newline}}
\newcommand{\kunal}[1]{\ifhmode\newline\fi\marrow \textsf{*** (Kunal: ) #1\newline}}

\else
\newcommand{\marrow}{}
\newcommand{\jirka}[1]{}

\fi

\begin{document}
\maketitle

\begin{abstract} 
  The $\gamma_2$ norm of a real $m\times n$ matrix $A$ is the minimum
  number $t$ such that the column vectors of $A$ are contained in a
  $0$-centered ellipsoid $E\subseteq\R^m$ which in turn is contained
  in the hypercube $[-t, t]^m$. We prove that this classical quantity
  approximates the \emph{hereditary discrepancy} $\herdisc A$ as
  follows: $\gamma_2(A) = {O(\log m)}\cdot \herdisc A$ and $\herdisc A =
  O(\sqrt{\log m}\,)\cdot\gamma_2(A) $. Since $\gamma_2$ is
  polynomial-time computable, this gives a polynomial-time
  approximation algorithm for hereditary discrepancy. Both
  inequalities are shown to be asymptotically tight.

  We then demonstrate on several examples the power of the $\gamma_2$
  norm as a tool for proving lower and upper bounds in discrepancy
  theory. Most notably, we prove a new lower bound of
  $\Omega(\log^{d-1} n)$ for the \emph{$d$-dimensional Tusn\'ady
    problem}, asking for the combinatorial discrepancy of an $n$-point
  set in $\R^d$ with respect to axis-parallel boxes. For $d>2$, this
  improves the previous best lower bound, which was of order
  approximately $\log^{(d-1)/2}n$, and it comes close to the best
  known upper bound of $O(\log^{d+1/2}n)$, for which we also obtain a
  new, very simple proof. 
\end{abstract}

\section{Introduction}

\heading{Discrepancy and hereditary discrepancy. }
Let $V=[n]:=\{1,2,\ldots,n\}$ be a ground set and 
$\FF=\{F_1,F_2,\ldots,F_m\}$ be a system of subsets of~$V$.
The \emph{discrepancy} of $\FF$ is
\[\disc\FF:=\min_{x\in\{-1,1\}^n} \disc(\FF,x),\]
where the minimum is over all choices
of a vector $x\in\{-1,+1\}^n$ of signs for the points,
and
$\disc(\FF,x):=\max_{i=1,2,\ldots,m}\bigl|\sum_{j\in F_i}x_j\bigr|$.
(A vector $x\in\{-1,1\}^n$ is usually called a \emph{coloring} in this
context.)

This combinatorial notion of discrepancy originated in the classical theory
of \emph{irregularities of distribution}, as treated, e.g.,
in \cite{bc-id-87,drm-ti,abc-gd-97}, and more recently it has found
remarkable applications in computer science and elsewhere
(see \cite{s-tlpm-87,Chazelle-book,m-gd} for general introductions
and, e.g., \cite{larsen14} for a recent use).

For the subsequent discussion, we also need the
notion of discrepancy for matrices: for an $m\times n$
real matrix $A$ we set $\disc A :=\min_{x\in\{-1,1\}^n}\|Ax\|_\infty$,
where $\|\cdot\|_\infty$ is the usual $\ell_\infty$ norm on $\R^m$.
If $A$ is the incidence matrix of the set system $\FF$ as above
(with $a_{ij}=1$ if $j\in F_i$ and $a_{ij}=0$ otherwise),
then the matrix definition coincides with the one for set systems.

We can add elements to the sets in any set system with arbitrarily
large discrepancy to get a new set system with small, even zero,
discrepancy. This phenomenon was
exploited 
in \cite{charikar-al-disc-inapprox} for showing that, assuming
P$\,\ne\,$NP, no polynomial-time algorithm can distinguish systems
$\FF$ with zero discrepancy from those with discrepancy of order
$\sqrt n$ in the regime $m = O(n)$, which practically means that
$\disc\FF$ cannot be approximated at all in polynomial time.

A better behaved notion is the \emph{hereditary discrepancy} of $\FF$,
given by
\[
\herdisc \FF:=\max_{J\subseteq V}\disc(\FF|_J),
\]
were $\FF|_J$ denotes the \emph{restriction} of the set system $\FF$
to the ground set $J$, i.e., $\{F\cap J:F\in\FF\}$. Similarly,
for a matrix $A$, $\herdisc A := \max_{J\subseteq[n]}{\disc
  A_J}$ where $A_J$ is the submatrix of $A$ consisting of the
columns indexed by the set~$J$.
  
At first sight, hereditary discrepancy may seem harder to deal with
than discrepancy. For example, while $\disc\FF\le k$
has an obvious polynomial-time verifiable certificate, namely, a suitable
coloring $x\in\{-1,1\}^n$, it is not at all clear how one could certify
either $\herdisc\FF\le k$ or $\herdisc\FF>k$ in polynomial time.

Nevertheless, hereditary discrepancy has turned out to have
significant advantages over discrepancy.  Most of the classical upper
bounds for discrepancy of various set systems actually apply to
hereditary discrepancy as well.  The \emph{determinant lower bound}, a
powerful tool introduced by Lov\'asz, Spencer and Vesztergombi
\cite{lsv-dssm-86}, works for hereditary discrepancy and \emph{not}
for discrepancy.  The determinant lower bound for a matrix $A$ is the
following algebraically defined quantity:
\[\detlb A=
\max_k\max_B |\det B\,|^{1/k},
\]
 where $B$ ranges over all $k\times k$
submatrices of~$A$. Lov\'asz et al.\ proved 
that $\herdisc A\ge\frac12\detlb A$ for all
$A$. Later it was shown in  \cite{almosttight} that $\detlb A$
also bounds $\herdisc A$ from above up to a polylogarithmic factor;
namely, $\herdisc A=O(\log(mn)\sqrt{\log n}\,)\cdot\detlb(A)$.

While the quantity $\detlb A$ enjoys some pleasant properties,
there is no known polynomial-time algorithm for computing it.
Bansal \cite{bansal-di} provided a polynomial-time algorithm that, given
a system $\FF$ with $\herdisc \FF\le D$, computes a coloring $x$
witnessing $\disc\FF=O(D\log(mn))$. However, this is not an approximation
algorithm for the hereditary discrepancy in the usual sense,
since it may find a low-discrepancy coloring even for $\FF$ with large
hereditary discrepancy.

\heading{The $\gamma_2$ factorization norm. } The first
polynomial-time approximation algorithm with a polylogarithmic
approximation factor for hereditary discrepancy was found by the last
two authors and Zhang \cite{NTZ-diffprivacy}. Here we strengthen and
streamline this result, and show that hereditary discrepancy is
approximated by the $\gamma_2$ factorization norm from Banach space
theory. (This connection was implicit in~\cite{NTZ-diffprivacy}.)  A
preliminary version of our results, which we have simplified and
extended, appeared in the conference
publications~\cite{NT,e8-tusnady}. For some of the simplifications we
are indebted to Noga Alon and Assaf Naor, who pointed out that the
geometric quantity used in~\cite{NT,e8-tusnady} is in fact equivalent
to the $\gamma_2$ norm.

The
$\gamma_2$ norm of an $m\times n$ matrix $A$, taken as a linear operator from $\ell_1^n$ to
$\ell_\infty^m$, is defined as
\begin{equation*}
\gamma_2(A) := \min\{\|B\|_{2\to\infty}\|C\|_{1\to 2}: A = BC\}.
\end{equation*}
Above, $\|\cdot\|_{p\to q}$ stands for the $\ell_p \to \ell_q$
operator norm, and $B:\ell_1^n \to \ell_2$, $C:\ell_2\to
\ell_\infty^m$ range over linear operators. Without loss of
generality, we can assume that the rank of $B$ and $C$ is at most
 the rank of $A$. Treating $B$ and $C$ as matrices, it is easy
to see that $\|B\|_{2\to\infty}$ is equal to the largest Euclidean
norm of row vectors of $B$, and $\|C\|_{1\to 2}$ is equal to the
largest Euclidean norm of column vectors of $C$. Moreover, by a
standard compactness argument, the minimum is achieved in this
finite-dimensional case.

We will also make use of an equivalent geometric definition of
$\enorm{A}$. Let the $\ell_\infty$ norm $\|E\|_\infty$ of an ellipsoid
$E$ be defined as the largest $\ell_\infty$ norm of any point in
$E$. Then $\enorm{A}$ is equal to the minimum $\ell_\infty$ norm of a
$0$-centered ellipsoid $E$ that contains all column vectors of $A$, as
is illustrated in the next picture (for $m=2$): 
\immfig{einf}

Given a factorization $A=BC$ witnessing $\gamma_2(A)$, an optimal
ellipsoid can be defined as $\{Bx: \|x\|_2 \leq \|C\|_{1\to 2}\}$. In
the reverse direction, given a 0-centered ellipsoid $E = \{x: x^T M x
\leq 1\}$, defined by a positive definite matrix $M$ with positive
square root $M^{1/2}$, and containing the columns of $A$, the
factorization $A = M^{-1/2} (M^{1/2}A)$ satisfies
$\|M^{-1/2}\|_{2\to\infty} = \|E\|_\infty$ and $\|M^{1/2}A\|_{1\to 2}
\leq 1$.


We use the notation $\enorm \FF$ for a set system $\FF$ to mean the
$\gamma_2$ norm of the incidence matrix of $\FF$.

\heading{Results on the $\gamma_2$ norm. }  A number of useful
properties of $\gamma_2$ are known, such as the non-obvious fact that
it is indeed a norm~\cite{Tomczak-J-book} (we give an example of how
the triangle inequality fails for $\detlb$), and the fact that it is
is multiplicative under the \emph{Kronecker product} (or tensor
product) of matrices~\cite{LeeSS08}. We further prove a stronger form
of the triangle inequality for matrices supported on disjoint subsets
of the columns. 

\heading{Relationship between $\gamma_2$ and $\herdisc$. } 
Next we prove the following two inequalities relating $\enorm A$ and
$\herdisc A$, which are central to our work: there exists a constant
$C$ such that for every matrix $A$ with $m$ rows,
\begin{align}
  &\herdisc A\ge \frac {\enorm A} {C\log m}, \mbox{
    and} \label{e:NTineq1}\\
  &\herdisc A \le \enorm A \cdot C\sqrt{\log m}\label{e:NTineq2}
\end{align}
(As we will see in Section~\ref{s:simplified-ineq1} below,
\eqref{e:NTineq1} is actually valid with $\log\rank A$ instead of
$\log m$.)  Moreover, $\enorm A$ can be approximated to any desired
accuracy in polynomial time using semidefinite
programming~\cite{LinialMSS07-signmatrices}.  These results together
provide an $O(\log^{3/2}m)$-approximation algorithm for $\herdisc A$,
improving on the  $O(\log^3 m)$-approximation from~\cite{NTZ-diffprivacy}.

The lower bound~\eqref{e:NTineq1} is proved using a dual
characterization of $\gamma_2(A)$ in terms of the trace
norm~\cite{LeeSS08}, which we relate to $\detlb A$. Our proof also
implies that $\enorm A$ is between $\detlb A$ and $O(\log
m) \cdot\detlb(A)$.  The upper bound~\eqref{e:NTineq2} is proved using a result of
Banaszczyk~\cite{banasz98}. It is not constructive, in the sense that
we do not know of a polynomial-time algorithm that computes a coloring
achieving the upper bound. Nevertheless, the algorithms of
Bansal~\cite{bansal-di} or Rothvoss~\cite{Rothvoss14-convex} can be
used to find colorings with discrepancy $O(\log m)\cdot \enorm A$ in
polynomial time.

We show that both inequalities \eqref{e:NTineq1} and \eqref{e:NTineq2}
are asymptotically tight in the worst case.
For \eqref{e:NTineq1}, the asymptotic tightness is demonstrated
on the following simple example: 
for the system $\II_n$ of initial segments of $\{1,2,\ldots,n\}$,
whose incidence matrix is the lower triangular matrix $T_n$ with
$1$s on the main diagonal and below it, we prove that 
the $\gamma_2$ norm is of order $\log n$,
while the hereditary discrepancy is well known to be~$1$. It is
interesting to compare our bounds on $\gamma_2(T_n)$ with a related
but incomparable result of Fredman~\cite{Fredman82}, who showed that
in any factorization $T_n = AB$, with $A$ and $B$ matrices over the integers, the smallest achievable total number of non-zero entries in $A$
and $B$ is $2n\log_\lambda n + O(n)$ for $\lambda=3+2\sqrt{2}$.

We have computed optimal ellipsoids witnessing $\enorm{T_n}$
numerically for moderate values of $n$, and they display a remarkable
and aesthetically pleasing ``limit shape''. It would be interesting
to understand these optimal ellipsoids theoretically---we leave this
as an open problem.

\heading{Applications in discrepancy theory. } In the second part of
the paper we apply the $\gamma_2$ norm to prove new results
on combinatorial discrepancy, as well as to give simple new proofs of known
results.

The most significant result is a new lower bound for the
$d$-dimensional Tusn\'ady's problem; before stating it,
let us give some background.

\heading{The ``great open problem.'' }
Discrepancy theory started with a result conjectured by van der Corput
\cite{c-vI-35,c-vII-35} and first proved by van Aardenne-Ehrenfest 
\cite{a-pijd-45,a-ijd-49}, stating that every infinite sequence
$(u_1,u_2,\ldots)$ of real numbers in $[0,1]$ must have a significant
deviation from a ``perfectly uniform'' distribution. Roth \cite{r-id-54}
found a simpler proof of a stronger bound, and he re-cast the
problem in the following setting, dealing with finite point
sets in the unit square $[0,1]^2$ instead of infinite sequences in $[0,1]$:

Given an $n$-point set $P\subset [0,1]^2$,
the \emph{discrepancy} of $P$ is defined as
\[
D(P,\RR_2):=\sup\Bigl \{\Bigl||P\cap R|-n\lambda^2(R\cap[0,1]^d) \Bigr|: R\in\RR_2\Bigr\},
\]
where $\RR_2$ denotes the set of all $2$-dimensional axis-parallel
rectangles (or $2$-dimensional
intervals), of the form $R=[a_1,b_1]\times [a_2,b_2]$, 
and $\lambda^2$ is the area (2-dimensional Lebesgue measure).
More precisely, $D(P,\RR_2)$ is the \emph{Lebesgue-measure discrepancy} of $P$
w.r.t.\ axis-parallel rectangles. Further let $D(n,\RR_2)=\inf_{P:|P|=n} D(P,\RR_2)$
be the best possible discrepancy of an $n$-point set.

Roth proved that $D(n,\RR_2)=\Omega(\sqrt{\log n})$, while earlier work
of van der Corput yields $D(n,\RR_2)=O(\log n)$. Later Schmidt 
\cite{s-idVII-72} improved the lower bound to $\Omega(\log n)$.

Roth's setting immediately raises the question about a higher-dimensional
analog of the problem: letting $\RR_d$ stand for the system of all
axis-parallel boxes (or $d$-dimensional intervals) in $[0,1]^d$, 
what is the order
of magnitude of $D(n,\RR_d)$? There are many ways of showing an upper
bound of $O(\log^{d-1}n)$, the first one being the Halton--Hammersley
construction \cite{h-mcmsm-60,h-ecqsp-60}, while Roth's lower bound
method yields $D(n,\RR_d)=\Omega(\log^{(d-1)/2}n)$.
In these bounds, $d$ is considered
fixed and the implicit constants in the $O(.)$ and $\Omega(.)$ notation
may depend on it.

Now, over 50 years later, the upper bound is still the best known,
and Roth's lower bound has been improved only a little: first for $d=3$
by Beck \cite{b-tdaet-89} and by Bilyk and Lacey \cite{bl-sbi3D},
and then for all $d$ by Bilyk, Lacey, and Vagharshakyan \cite{blv-sbi}.
The lower bound from \cite{blv-sbi} has the form $\Omega((\log n)^{(d-1)/2+\eta(d)})$, where $\eta(d)>0$ is a constant depending on $d$, with
$\eta(d)\ge c/d^2$ for an absolute constant $c>0$.
Thus, the upper bound for $d\ge 3$ is still about the square of the lower
bound, and closing this significant gap  is called
the ``great open problem'' in the book \cite{bc-id-87}.

\heading{Tusn\'ady's problem. } 
Here we essentially solve a combinatorial
analog of this problem.
In the 1980s Tusn\'ady raised a question
which, in our terminology, can be stated as follows. Let $P\subset\R^2$
be an $n$-point set, and let $\RR_2(P):=\{R\cap P:
R\in\RR_2\}$ be the system of all subsets of $P$ induced by
axis-parallel rectangles $R\in\RR_2$. What can be said about
the discrepancy of such a set system for the worst possible $n$-point $P$?
In other words, what is \[
\disc(n,\RR_2)=\max\{\disc\RR_2(P):
|P|=n\}?
\]

 We stress that for the Lebesgue-measure discrepancy $D(n,\RR_d)$
 we ask for the best placement of $n$ points so that each rectangle
 contains approximately the right number of points, while for 
 $\disc(n,\RR_2)$ the point set $P$ is given by an adversary,
 and we seek a $\pm1$ coloring so that the points in each rectangle
 are approximately balanced.

 Tusn\'ady actually asked if $\disc(n,\RR_2)$ could be bounded by a
 constant independent of~$n$.  This was answered negatively by Beck
 \cite{b-btcfs-81}, who also proved an upper bound of $O(\log^4 n)$.
 His lower bound argument uses a ``transference principle,'' showing
 that the function $\disc(n,\RR_2)$ in Tusn\'ady's problem cannot be
 asymptotically smaller than the smallest achievable Lebesgue-measure
 discrepancy of $n$ points with respect to axis-aligned boxes.  (This
 principle is actually simple to prove and quite general; Simonovits
 attributes the idea to V.\,T.\,S\'os. The main observation is that
 for any coloring with discrepancy $D_1$ of an $n$-point set with
 Lebesgue measure discrepancy $D_2$, the smaller of the two color
 classes has Lebesgue measure discrepancy at most $\frac12 (D_1 +
 D_2)$.) The upper bound was improved to $O((\log n)^{3.5+\eps})$ by
 Beck \cite{b-btcfs-89}, to $O(\log^3 n)$ by Bohus \cite{b-odtp-90},
 and to the current best bound of $O(\log^{2.5} n)$ by Srinivasan
 \cite{s-idbsm-96}.

The obvious $d$-dimensional generalization of Tusn\'ady's problem was
attacked by similar methods. All known lower bounds so far relied on
the transference principle mentioned above. The current best upper
bound for $d\ge 3$ is $O(\log^{d+1/2}n)$ due to Larsen
\cite{larsen14}, which is a a slight strengthening of a previous bound
of $O(\log^{d+1/2} n\sqrt{\log\log n}\,)$ from~\cite{m-dipol-97}.

Here we improve on the lower bound for the $d$-dimensional Tusn\'ady's problem
significantly; while up until now the uncertainty in the exponent of
$\log n$ was roughly between $(d-1)/2$ and $d+1/2$, we reduce it to
$d-1$ versus $d+1/2$. 

\begin{theorem}\label{t:tusnady}
For every fixed $d\ge 2$ and for infinitely many values of $n$,
there exists an $n$-point set $P\subset\R^d$ with
\[
\disc \RR_d(P)=\Omega(\log^{d-1}n),
\]
where the constant of proportionality depends only on~$d$.
\end{theorem}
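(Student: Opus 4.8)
The plan is to derive Theorem~\ref{t:tusnady} by combining three facts already at our disposal: the lower bound \eqref{e:NTineq1} relating $\herdisc$ to $\enorm{\cdot}$, the multiplicativity of $\enorm{\cdot}$ under the Kronecker product~\cite{LeeSS08}, and the lower bound $\enorm{T_n}=\Omega(\log n)$ for the lower-triangular all-ones matrix $T_n$ (which we establish in connection with the tightness of \eqref{e:NTineq1}). The idea is to locate a copy of the $d$-fold Kronecker power $T_n^{\otimes d}$ inside the box incidence matrix of the grid $\{1,\dots,n\}^d$, conclude that this incidence matrix has $\enorm{\cdot}=\Omega(\log^d n)$, lose exactly one logarithmic factor through \eqref{e:NTineq1} to get the same bound on its hereditary discrepancy, and finally extract the desired point set from the definition of $\herdisc$.

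Concretely, I would take $P=\{1,\dots,n\}^d\subset\R^d$ and let $A$ be the incidence matrix of $\RR_d(P)$. Keeping only the rows of $A$ that correspond to the ``anchored'' boxes $[1,b_1]\times\dots\times[1,b_d]$ with $(b_1,\dots,b_d)\in\{1,\dots,n\}^d$, and all columns, the entry indexed by $(b_1,\dots,b_d)$ and a point $(j_1,\dots,j_d)\in\{1,\dots,n\}^d$ is $\prod_{k=1}^d\mathbf 1[j_k\le b_k]$; after reordering index tuples, this submatrix is exactly $T_n^{\otimes d}$. Since $\enorm{\cdot}$ cannot increase under passing to a submatrix (a factorization of $A$ restricts to one of the submatrix with no larger operator norms) and is multiplicative under Kronecker products, and since $\enorm{T_n}\ge c\log n$ for an absolute constant $c>0$, we get $\enorm A\ge\enorm{T_n^{\otimes d}}=\enorm{T_n}^d\ge (c\log n)^d$. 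Because $\rank A\le|P|=n^d$, applying \eqref{e:NTineq1} in the $\log\rank$ form gives $\herdisc\RR_d(P)=\herdisc A\ge\enorm A/(C\log\rank A)\ge (c\log n)^d/(Cd\log n)$, which is $\Omega(\log^{d-1}n)$ with a constant depending only on $d$. By the definition of hereditary discrepancy there is a subset $J\subseteq P$ with $\disc\RR_d(J)=\herdisc\RR_d(P)$; writing $n'=|J|\le n^d$, so that $\log n\ge\tfrac1d\log n'$, this reads $\disc\RR_d(J)=\Omega(\log^{d-1}n')$, again with a $d$-dependent constant. Since $\disc\RR_d(J)\to\infty$ as $n\to\infty$, the sizes $n'$ are unbounded and hence take infinitely many values, which yields the theorem.

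The genuine difficulty, in my view, lies entirely in the two imported ingredients --- the logarithmic lower bound $\enorm{T_n}=\Omega(\log n)$, proved through the trace-norm dual characterization of $\enorm{\cdot}$ and its relation to $\detlb$, and the Kronecker multiplicativity of $\enorm{\cdot}$ from~\cite{LeeSS08} --- rather than in the assembly above. Within this argument the only points needing care are: verifying that the anchored boxes really yield $T_n^{\otimes d}$ (so that only initial segments, not arbitrary intervals, are used in each coordinate); the submatrix monotonicity of $\enorm{\cdot}$ and of $\herdisc$; and, above all, the logarithmic bookkeeping, since it is precisely the single factor $\log\rank A=\Theta(d\log n)$ lost in \eqref{e:NTineq1} that makes the final exponent $d-1$ rather than $d$. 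All constants generated along the way depend only on $d$, as the theorem requires.
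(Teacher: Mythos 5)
Your proposal is correct and follows essentially the same route as the paper: identify the anchored-box incidence matrix on the grid $[n]^d$ with $T_n^{\otimes d}$, invoke Kronecker multiplicativity of $\gamma_2$ and the bound $\enorm{T_n}=\Omega(\log n)$ to get $\enorm{\cdot}=\Omega(\log^d n)$, and lose one logarithmic factor through inequality \eqref{e:NTineq1}. The only (harmless) difference is that you carry out the bookkeeping for the number of points and the ``infinitely many $n$'' step more explicitly than the paper does.
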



From the point of view of the ``great open problem,'' this result is
perhaps somewhat disappointing, since it shows that, in order to
determine the asymptotics of the Lebesgue-measure discrepancy
$D(n,\RR_d)$, one has to use some special properties of the Lebesgue
measure---combinatorial discrepancy cannot help, at least for
improving the upper bound.
 In Section~\ref{s:Lpbound} we will discuss
 a bound on \emph{average} discrepancy, which in a sense separates the
 combinatorial discrepancy (as in Tusn\'ady's problem) from the
 Lebesgue-measure discrepancy.

Using the $\gamma_2$ norm as the main tool, our proof
of Theorem~\ref{t:tusnady} is surprisingly simple. In a nutshell,
first we observe that, since the target bound is polylogarithmic in $n$,
instead of estimating the discrepancy for some cleverly constructed
 $n$-point set $P$,  we can bound from below the hereditary discrepancy 
of the regular $d$-dimensional grid $[n]^d$, where $[n]=\{1,2,\ldots,n\}$.
By a standard and well known reduction, instead of all $d$-dimensional
intervals in $\RR_d$, it suffices to consider only ``anchored''
intervals, of the form $[0,b_1]\times\cdots\times[0,b_d]$.
Now the main observation is that the set system $\GG_{d,n}$
induced on $[n]^d$ by anchored intervals is a $d$-fold product of the system 
$\II_n$ of one-dimensional
intervals mentioned earlier, and its incidence matrix is
the $d$-fold Kronecker product of the matrix~$T_n$.
Thus, by the properties of the $\gamma_2$ norm, we
get that $\enorm{\GG_{d,n}}$ is of order $\log^d n$, and
inequality \eqref{e:NTineq1} finishes the proof of Theorem~\ref{t:tusnady}.

At the same time, using the other inequality \eqref{e:NTineq2},
we obtain a new proof of the best known
upper bound $\disc(n,\RR_d)=O(\log^{d+1/2}n)$, with no extra effort.
This proof is very different from the previously known ones and
relatively simple.

The same method also gives a surprisingly precise upper bound
on the discrepancy of the set system of all subcubes of the
$d$-dimensional cube $\{0,1\}^d$, where this time $d$ is a variable
parameter, not a constant as before. This discrepancy has
previously been studied in \cite{chl-tbhd,chaz-lvov-boxes,NT-HAP},
and it was known that it is between $2^{c_1d}$ and $2^{c_2d}$ for
some constants $c_2>c_1>0$. In Section~\ref{s:hibox} we
show that it is $2^{(c_0+o(1))d}$, for $c_0=\log_2(2/\sqrt 3)\approx 0.2075$.

\heading{General theorems on discrepancy. } Transferring the various
properties of the $\gamma_2$ norm into the setting of
hereditary discrepancy via inequalities \eqref{e:NTineq1},
\eqref{e:NTineq2}, we obtain general results about the behavior of
discrepancy under operations on set systems.
In particular, we get a sharper version of a result of
\cite{almosttight} concerning the discrepancy of the union
of several set systems, and a new bound on the discrepancy of
a set system $\FF$ in which every set $F\in\FF$ is a disjoint
union $F_1\cup\cdots\cup F_t$, where $\FF_1,\ldots,\FF_t$
are given set systems and $F_i\in\FF_i$, $i=1,2,\ldots,t$.
These consequences are presented in Section~\ref{s:disc-thms},
together with some examples showing them to be quantitatively near-tight.

\heading{Other problems in combinatorial discrepancy: new simple
  proofs.}  In Section~\ref{s:sbounds} we revisit two set systems
for which discrepancy has been studied extensively: arithmetic
progressions in $[n]$ and intervals in $k$ permutations of $[n]$. In
both of these cases, asymptotically tight bounds have been known.
Using the $\gamma_2$ norm we recover almost tight upper bounds, up to
a factor of $\sqrt{\log n}$, with very short proofs.

\heading{Immediate applications in computer science.}  Our lower bound
for Tusn\'ady's problem implies a lower bound of $\sqrt{t_ut_q} =
\Omega(\log^{d}n)$ on the update time $t_u$ and query time $t_q$ of
constant multiplicity oblivious data structures for orthogonal range
searching in $\R^d$ in the group model.  This is tight up to a
constant, and strengthens a prior result of Larsen, who showed
$\sqrt{t_ut_q} \geq \log^{(d-1)/2} n$~\cite{larsen14}. Our lower bound
is incomparable with the results of Fredman~\cite{Fredman82}, who
proved the lower bound $(t_u + t_q)/2 = \Omega(\log n)$ only for $d=1$
but in a stronger model that makes no assumption on multiplicity. The
relationship between hereditary discrepancy and differential privacy
from~\cite{MN-stoc12} and the lower bound for Tusn\'ady's problem
imply that the necessary error for computing orthogonal range counting
queries under differential privacy is $\Omega(\log^{d-1}n)$, which is
best possible up to a factor of $\log n$.

Our lower and upper bounds on the discrepancy of
subcubes of the Boolean cube $\{0,1\}^d$ and the results
from~\cite{NTZ-diffprivacy} imply that the necessary and sufficient
error for computing marginal queries on $d$-attribute databases under
differential privacy is $(2/\sqrt{3})^{d+o(d)}$.

\heading{Discrepancy in communication complexity.} A notion that is also known
as discrepancy, but distinct from combinatorial or hereditary
discrepancy, is a standard tool for proving lower bounds in communication complexity. It is commonly
defined for an $m\times n$ matrix $A$ with entries in $\{-1, 1\}$ as
\[
\rdisc A := \min_P \max_{I,J} \Bigl|\sum_{i \in I, j \in J}{p_{ij}a_{ij}}\Bigr|,
\]
where $P$ ranges over $m\times n$ matrices with non-negative entries
such that $\sum{p_{ij}} = 1$, $I$ ranges over subsets of the rows of
$A$, and $J$ ranges over subsets of the columns. To distinguish this
notion from $\disc$, we call it ``rectangle discrepancy''. Linial and
Shraibman~\cite{LinialS09-learning} related $\rdisc$ to $\gamma_2$:
they proved that $(\rdisc A)^{-1}$ is equal, up to constant factors,
to $\min_B \gamma_2(B)$, where $B$ ranges $m\times n$ real matrices
satisfying $a_{ij}b_{ij} \geq 1$ for all $i$ and $j$. Together with
our results, this implies that there exists an absolute constant $C$
so that for any $m\times n$ matrix $A$ with entries in $\{-1, 1\}$
\[
C\sqrt{\log m} \geq (\rdisc A)(\min_B\herdisc B) \geq \frac{1}{C\log
  m},
\]
with the minimum taken over matrices $B$ as above. This is the first formal
connection between hereditary discrepancy and rectangle discrepancy
that we are aware of. 

The
papers~\cite{LinialMSS07-signmatrices,LinialS09-CC,LinialS09-learning}
further connect the $\gamma_2$ norm to various other complexity
measures of sign matrices, in particular the margin and dimension
complexity from learning theory, and randomized and quantum
communication complexity. Using \eqref{e:NTineq1} and \eqref{e:NTineq2},
we can replace $\gamma_2$ with $\herdisc$ in each of these results, at
the cost of losing polylogarithmic factors in the bounds.

\section{Properties of the $\gamma_2$  norm}\label{s:props}

The $\gamma_2$ norm has various favorable properties, which make it a
very convenient and powerful tool in studying hereditary discrepancy,
as we will illustrate later on. We begin by recalling some classical
facts.

\subsection{Known properties of $\gamma_2$}

Observe that the norm $\|B\|_{ 2\to\infty}$ is monotone non-increasing
under removing rows of $B$. Similarly, $\|C\|_{1\to 2}$ is monotone
non-increasing under removing columns of $C$. It then follows  that $\gamma_2(A)$ is monotone
non-increasing under taking an arbitrary submatrix of $A$: for any
subset $I$ of the rows and any subset $J$ of the columns we have
\begin{equation}\label{e:monotone}
\gamma_2(A_{I,J}) \leq \gamma_2(A),
\end{equation}
where $A_{I,J}$ is the submatrix of $A$ induced by $I$ and $J$. This
trivial observation turns out to be crucial for relating $\gamma_2$ and
hereditary discrepancy.

Next we observe that $\gamma_2$ is invariant under transposition. This
is surely a well-known fact, but we give the short proof for
completeness.
\begin{lemma}\label{l:transp} $\enorm A=\enorm{A^T}$.
\end{lemma}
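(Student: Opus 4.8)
The plan is to exploit the factorization definition directly: a factorization $A = BC$ of $A$ transposes to a factorization $A^T = C^T B^T$ of $A^T$, and the two factor norms simply exchange roles. First I would recall the concrete description of the factor norms already noted above: $\|B\|_{2\to\infty}$ is the largest Euclidean norm of a row of $B$, while $\|C\|_{1\to 2}$ is the largest Euclidean norm of a column of $C$. Consequently $\|C^T\|_{2\to\infty}$, being the largest Euclidean norm of a row of $C^T$, equals the largest Euclidean norm of a column of $C$, i.e.\ $\|C^T\|_{2\to\infty} = \|C\|_{1\to 2}$; and symmetrically $\|B^T\|_{1\to 2} = \|B\|_{2\to\infty}$.

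Next, given an optimal factorization $A = BC$ realizing $\gamma_2(A) = \|B\|_{2\to\infty}\|C\|_{1\to 2}$ (which exists by the compactness remark above), I would check the dimensions. Writing $A$ as an $m\times n$ matrix and letting $r$ be the common inner dimension, $B$ is an $m\times r$ matrix viewed as an operator $\ell_2^r\to\ell_\infty^m$ and $C$ is an $r\times n$ matrix viewed as an operator $\ell_1^n\to\ell_2^r$. Then $A^T = C^T B^T$, where $C^T$ is an $n\times r$ matrix, a legitimate operator $\ell_2^r\to\ell_\infty^n$, and $B^T$ is an $r\times m$ matrix, a legitimate operator $\ell_1^m\to\ell_2^r$. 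Hence this is an admissible factorization of $A^T$, and
\[
\gamma_2(A^T)\ \le\ \|C^T\|_{2\to\infty}\,\|B^T\|_{1\to 2}\ =\ \|C\|_{1\to 2}\,\|B\|_{2\to\infty}\ =\ \gamma_2(A).
\]

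Finally, applying the inequality just established to the matrix $A^T$ in place of $A$ gives $\gamma_2(A) = \gamma_2\bigl((A^T)^T\bigr) \le \gamma_2(A^T)$, and combining the two directions yields the claimed equality. There is essentially no real obstacle here; the only point requiring care is the bookkeeping of which factor plays the role of the $\ell_1\to\ell_2$ operator and which plays the role of the $\ell_2\to\ell_\infty$ operator after transposition, together with the observation that the row-norm and column-norm descriptions of the two operator norms are exactly swapped under taking transposes. One could alternatively argue via the ellipsoid description of $\gamma_2$, but the factorization argument above is the shortest.
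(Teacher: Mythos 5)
Your argument is exactly the paper's proof: take an optimal factorization $A=BC$, transpose to get $A^T=C^TB^T$, observe that the two factor norms swap under transposition, and then apply the resulting inequality to $A^T$ to obtain the reverse direction. The extra dimension bookkeeping you include is correct but not needed beyond what the paper states.
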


\begin{proof} 
  Let $A = B_0C_0$ be a factorization that achieves $\gamma_2(A)$,
  i.e.~$\gamma_2(A) = \|B_0\|_{2\to\infty}\|C_0\|_{1\to 2}$.  Since
  $A^T = C_0^T B_0^T$, $\|C_0^T\|_{2\to \infty} = \|C_0\|_{1\to 2}$
  and $\|B_0^T\|_{1 \to 2} = \|B_0\|_{2\to \infty}$, we have
  \[
  \gamma_2(A^T) \leq \|C_0^T\|_{2\to \infty} \|B_0^T\|_{1\to 2} 
  = \|C_0\|_{1\to 2} \|B_0\|_{2\to\infty}= \gamma_2(A).
  \] 
  The reverse inequality $\gamma_2(A) \leq \gamma_2(A^T)$ follows by
  symmetry.
\end{proof}

The next (non-obvious) fact implies that $\gamma_2$ is indeed a
norm. For a proof see e.g.~\cite{Tomczak-J-book}.
\begin{prop}[Triangle inequality]\label{p:mnorm}
We have $\enorm{A+B}\le\enorm A+\enorm B$ for every two $m\times n$
real matrices $A,B$.
\end{prop}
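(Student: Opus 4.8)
The plan is to build a good factorization of $A+B$ out of optimal factorizations of $A$ and of $B$ by concatenating them blockwise. First I would fix factorizations $A=B_1C_1$ and $B=B_2C_2$ attaining $\gamma_2(A)$ and $\gamma_2(B)$. Since replacing $(B_i,C_i)$ by $(\lambda B_i,\lambda^{-1}C_i)$ for a suitable $\lambda>0$ leaves the product $\|B_i\|_{2\to\infty}\|C_i\|_{1\to 2}$ unchanged while rescaling the two factors against each other, I may assume both factorizations are \emph{balanced}:
\[
\|B_1\|_{2\to\infty}^2=\|C_1\|_{1\to 2}^2=\gamma_2(A),\qquad
\|B_2\|_{2\to\infty}^2=\|C_2\|_{1\to 2}^2=\gamma_2(B).
\]
(The case where $A$ or $B$ is the zero matrix is trivial.)

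Next, let $B'$ be the matrix obtained by placing $B_1$ and $B_2$ side by side, so that $B'$ has $m$ rows, the $i$-th one being the concatenation of the $i$-th rows of $B_1$ and $B_2$; and let $C'$ be obtained by stacking $C_1$ on top of $C_2$, so that $C'$ has $n$ columns, the $j$-th one being the concatenation of the $j$-th columns of $C_1$ and $C_2$. Expanding the block product gives $B'C'=B_1C_1+B_2C_2=A+B$, whence $\gamma_2(A+B)\le\|B'\|_{2\to\infty}\,\|C'\|_{1\to 2}$.

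It then remains to bound the two factor norms, using the description recalled in the introduction that $\|\cdot\|_{2\to\infty}$ is the largest Euclidean norm of a row and $\|\cdot\|_{1\to 2}$ the largest Euclidean norm of a column. Every row of $B'$ has squared Euclidean norm equal to the sum of the squared Euclidean norms of the corresponding rows of $B_1$ and $B_2$, hence at most $\|B_1\|_{2\to\infty}^2+\|B_2\|_{2\to\infty}^2=\gamma_2(A)+\gamma_2(B)$; symmetrically $\|C'\|_{1\to 2}^2\le\gamma_2(A)+\gamma_2(B)$. Multiplying the two bounds $\sqrt{\gamma_2(A)+\gamma_2(B)}$ gives $\gamma_2(A+B)\le\gamma_2(A)+\gamma_2(B)$. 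There is no genuine obstacle in this argument; the one point that needs care is the balancing step, which is precisely what makes the two square roots multiply out to $\gamma_2(A)+\gamma_2(B)$ rather than to something larger. (With unbalanced factorizations the crude bound $\sqrt{(\|B_1\|_{2\to\infty}^2+\|B_2\|_{2\to\infty}^2)(\|C_1\|_{1\to 2}^2+\|C_2\|_{1\to 2}^2)}$ is, by Cauchy--Schwarz, at least $\gamma_2(A)+\gamma_2(B)$, so it points the wrong way; equivalently, one can keep an arbitrary factorization and optimize over one extra scaling parameter inserted into the block construction. The whole argument can also be phrased in the language of the ellipsoid picture from the introduction, but the block-factorization form is the most transparent.)
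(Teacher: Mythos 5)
Correct. The paper does not give its own proof of this proposition; it simply cites the book of Tomczak--Jaegermann. Your argument is the standard, self-contained proof: concatenate the inner spaces of the two factorizations to get a factorization of $A+B$, and balance each factorization beforehand so that the resulting product of norms is $\gamma_2(A)+\gamma_2(B)$ rather than the weaker $\sqrt{2}\max\{\gamma_2(A),\gamma_2(B)\}$ or worse. The balancing step (or, equivalently, optimizing over a scaling parameter in the block construction, as you note) is exactly the point that needs care, and you handle it correctly; the degenerate case $A=0$ or $B=0$ is also correctly set aside. It is worth observing that your block construction is the same one used in Lemma~\ref{l:union} of the paper, except that there the ellipsoid (dual) picture is used and the disjointness of supports lets the argument avoid balancing and yields the stronger conclusion $\gamma_2(C)^2\le\gamma_2(A)^2+\gamma_2(B)^2$; so your proof and Lemma~\ref{l:union} are two faces of the same construction.
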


\heading{Remark on the determinant lower bound. }
Here is an example showing that the determinant lower
bound of Lov\'asz et al.\ does not satisfy the (exact)
triangle inequality:
for
\[
A=\begin{pmatrix}1&1\\0&1\end{pmatrix}, \ \ 
B=\begin{pmatrix}1&0\\-1&1\end{pmatrix},
\]
we have $\detlb A=\detlb B=1$, but $\detlb (A+B)=\sqrt 5$.

It may still be that the determinant lower bound satisfies
an approximate triangle inequality, say in the following sense:
$\detlb (A_1+\cdots+A_t)\stackrel{?}{\le} O(t)\cdot \max_i \detlb A_i$.
However, at present we can only prove this kind of inequality
with $O(t^{3/2})$ instead of~$O(t)$. 

\heading{Kronecker Product.}
Let $A$ be an $m\times n$ matrix and $B$ a $p\times q$ matrix.
We recall that the \emph{Kronecker product} $A\otimes B$ is 
the following $mp\times nq$ matrix, consisting of $m\times n$
blocks of size $p\times q$ each:
\[
\begin{pmatrix} a_{11}B& a_{12}B&\ldots& a_{1n}B\\ 
                 \vdots &\vdots&\vdots&\vdots\\
                a_{m1}B& a_{m2}B&\ldots&a_{mn}B
\end{pmatrix}
\]

In~\cite{LeeSS08} it was shown that $\gamma_2$ is multiplicative with
respect to the Kronecker product:

\begin{theorem}[{\cite[Thm.~17]{LeeSS08}}]\label{t:krone}
 For every two matrices $A,B$ we have
\[
\enorm {A\otimes B}=\enorm A\cdot\enorm B.
\]
\end{theorem}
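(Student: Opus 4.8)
The plan is to prove the two inequalities $\enorm{A\otimes B}\le\enorm A\cdot\enorm B$ and $\enorm{A\otimes B}\ge\enorm A\cdot\enorm B$ separately. The first drops straight out of the factorization definition of $\gamma_2$; the second is where the content lies, and it uses the dual description of $\gamma_2$ through the trace norm --- the same dual description that powers the proof of \eqref{e:NTineq1} and is attributed to \cite{LeeSS08}.

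\emph{Upper bound.} Take factorizations $A=B_1C_1$ and $B=B_2C_2$ achieving $\enorm A=\|B_1\|_{2\to\infty}\|C_1\|_{1\to 2}$ and $\enorm B=\|B_2\|_{2\to\infty}\|C_2\|_{1\to 2}$. By the mixed-product property of the Kronecker product, $(B_1C_1)\otimes(B_2C_2)=(B_1\otimes B_2)(C_1\otimes C_2)$, which is therefore a valid factorization of $A\otimes B$. Every row of $B_1\otimes B_2$ has the form $u\otimes v$ with $u$ a row of $B_1$ and $v$ a row of $B_2$, and $\|u\otimes v\|_2=\|u\|_2\|v\|_2$, so $\|B_1\otimes B_2\|_{2\to\infty}=\|B_1\|_{2\to\infty}\|B_2\|_{2\to\infty}$; using the column description of $\|\cdot\|_{1\to 2}$ in the same way, $\|C_1\otimes C_2\|_{1\to 2}=\|C_1\|_{1\to 2}\|C_2\|_{1\to 2}$. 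Multiplying the two identities gives $\enorm{A\otimes B}\le\enorm A\cdot\enorm B$.

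\emph{Lower bound.} Here I would invoke the dual formula
\[
\enorm M=\max\Bigl\{\,\bigl\|\diag(p)^{1/2}\,M\,\diag(q)^{1/2}\bigr\|_{S_1}\;:\;p,q\ge 0,\ \textstyle\sum_i p_i=\sum_j q_j=1\,\Bigr\},
\]
where $\|\cdot\|_{S_1}$ is the trace (nuclear) norm and $\diag(p)^{1/2}$ is the diagonal matrix with entries $\sqrt{p_i}$. Let $p_A,q_A$ and $p_B,q_B$ be the maximizers for $A$ and for $B$, and set $p=p_A\otimes p_B$, $q=q_A\otimes q_B$ on the product index sets; these are again nonnegative with unit sum, and $\diag(p)^{1/2}=\diag(p_A)^{1/2}\otimes\diag(p_B)^{1/2}$, likewise for $q$. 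Using the mixed-product property once more,
\[
\diag(p)^{1/2}(A\otimes B)\diag(q)^{1/2}=\bigl(\diag(p_A)^{1/2}A\,\diag(q_A)^{1/2}\bigr)\otimes\bigl(\diag(p_B)^{1/2}B\,\diag(q_B)^{1/2}\bigr),
\]
and since the singular values of $X\otimes Y$ are exactly the products $\sigma_i(X)\sigma_j(Y)$, the trace norm is multiplicative under $\otimes$. Feeding this particular pair $(p,q)$, which is feasible for the dual of $\enorm{A\otimes B}$, into the formula above gives $\enorm{A\otimes B}\ge\enorm A\cdot\enorm B$, and combined with the upper bound this proves the theorem.

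\emph{The crux.} The only ingredient that is not routine Kronecker-product algebra is the dual trace-norm formula, which I would cite rather than reprove: after normalizing a factorization to $\|C\|_{1\to 2}\le 1$ one has
\[
\enorm M^2=\min\Bigl\{\max_i(W_1)_{ii}:\left(\begin{smallmatrix}W_1&M\\ M^T&W_2\end{smallmatrix}\right)\succeq 0,\ (W_2)_{jj}\le 1\ \forall j\Bigr\},
\]
an SDP whose dual is precisely the maximization displayed above, with strong duality holding by Slater's condition. Granting this, everything else --- the identity $(PQ)\otimes(RS)=(P\otimes R)(Q\otimes S)$ and the multiplicativity of the Euclidean norm and of the trace norm under tensor products --- is elementary, so the main (and fairly modest) obstacle is just to carry out the two factorization/diagonalization computations cleanly without dimension bookkeeping errors.
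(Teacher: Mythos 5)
Your proof is correct. The lower bound is essentially the same argument the paper itself uses: start from maximizers $P_1,Q_1$ for $A$ and $P_2,Q_2$ for $B$ in the dual trace-norm characterization (Theorem~\ref{t:e8dual}), form the Kronecker products $P_1\otimes P_2$ and $Q_1\otimes Q_2$ (still nonnegative, diagonal, unit-trace), apply the mixed-product identity, and invoke multiplicativity of the nuclear norm under $\otimes$ (singular values of $X\otimes Y$ are the products $\sigma_i(X)\sigma_j(Y)$). For the upper bound the paper instead works with the equivalent geometric formulation: it picks ellipsoids $E(D_1)\supseteq\{\text{columns of }A\}$ and $E(D_2)\supseteq\{\text{columns of }B\}$, forms $D_1\otimes D_2$, and verifies that $E(D_1\otimes D_2)$ contains the columns of $A\otimes B$ with $\ell_\infty$-norm at most $(\alpha+\eps)(\beta+\eps)$. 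Your version goes straight through the factorization definition, multiplying $\|B_1\otimes B_2\|_{2\to\infty}$ and $\|C_1\otimes C_2\|_{1\to 2}$ via $\|u\otimes v\|_2=\|u\|_2\|v\|_2$. The two upper-bound arguments encode the same computation, but yours is a bit cleaner: it avoids the $\eps$-perturbation the paper introduces to ensure the witness ellipsoids are full-dimensional, and it sidesteps the $D^{-1}$ manipulations. One minor caveat: the SDP you quote in your ``crux'' remark (minimizing $\max_i (W_1)_{ii}$ under $(W_2)_{jj}\le 1$) is the asymmetric normalization, whereas the paper's displayed SDP uses the symmetric form $\min t$ with $X_{ii}\le t$ for all $i\in[m+n]$; these are equivalent by rescaling the block structure, but it is worth noting they are not literally the same program.
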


\heading{Semidefinite and dual formulations.}
We recall a formulation of $\enorm A$ as a semidefinite program. For
matrices with entries in $\{-1, 1\}$ this program was given
in~\cite{LinialMSS07-signmatrices}; the (easy) generalization to
general matrices can be found in~\cite{LeeSS08}. We have
\[
\begin{array}{rll}
  \enorm{A} = \min t&\mbox{s.t. }\\
  &X_{ii} \leq t& i=1, \ldots, m+n,\\
  &X_{i,m+j} = a_{ij}& i= 1, \ldots, m, j = 1, \ldots, n,\\
  &X \succeq 0
\end{array}
\]

Using standard techniques in convex optimization, e.g.~the ellipsoid
algorithm~\cite{gls-gaco-88}, the program above can be solved to any
given degree of accuracy in time polynomial in $m$, $n$, and the bit
representation of $A$. This gives a polynomial time algorithm to
approximate $\gamma_2(A)$ arbitrarily well.

Using the semidefinite formulation, and the duality theory for
semidefinite programming, Lee, Shraibman and \v{S}palek~\cite{LeeSS08}
derived a dual characterization of the $\gamma_2$ norm as a
maximization problem. This  characterization is a basic tool for
bounding $\gamma_2$ from below.  Let $\|A\|_*$ denote the \emph{nuclear
  norm} of a matrix $A$, which is the sum of the singular values of
$A$ (other names for $\|A\|_*$ are \emph{Schatten $1$-norm},
\emph{trace norm}, or \emph{Ky Fan $n$-norm}; see the text by
Bhatia~\cite{Bhatia-MA} for general background on symmetric matrix
norms).

\begin{theorem}[{\cite[Thm.~9]{LeeSS08}}]\label{t:e8dual}
We have
\[
\enorm A=\max\{\|P^{1/2}AQ^{1/2}\|_*: P,Q \mbox{ diagonal},
\mbox{nonnegative}, \Tr P=\Tr Q=1\}.
\]
\end{theorem}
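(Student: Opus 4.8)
The plan is to recognize this as the semidefinite-programming dual of the SDP formulation of $\enorm{A}$ displayed above, and to massage that dual into the stated shape. I would start with the easy inequality (``weak duality''): for \emph{every} feasible pair $P,Q$ one has $\enorm{A}\ge\|P^{1/2}AQ^{1/2}\|_*$. Indeed, take an optimal factorization $A=BC$ with $\|B\|_{2\to\infty}\|C\|_{1\to 2}=\enorm{A}$ and write $P^{1/2}AQ^{1/2}=(P^{1/2}B)(CQ^{1/2})$. By the Cauchy--Schwarz inequality for the trace, $\|XY\|_*\le\|X\|_{\mathrm{F}}\|Y\|_{\mathrm{F}}$, while $\|P^{1/2}B\|_{\mathrm{F}}^2=\sum_i P_{ii}\|b_i\|_2^2\le\|B\|_{2\to\infty}^2$ because $\|b_i\|_2\le\|B\|_{2\to\infty}$ for every row $b_i$ of $B$ and $\Tr P=1$, and similarly $\|CQ^{1/2}\|_{\mathrm{F}}^2\le\|C\|_{1\to 2}^2$; multiplying gives $\|P^{1/2}AQ^{1/2}\|_*\le\enorm{A}$.

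For the reverse inequality I would apply semidefinite duality to the primal program above. Attaching a nonnegative multiplier to each constraint $X_{ii}\le t$ produces a nonnegative diagonal matrix which, in the natural $m+n$ block structure, splits into two diagonal blocks $P$ (size $m$) and $Q$ (size $n$); forcing the coefficient of the free variable $t$ in the Lagrangian to vanish gives $\Tr P+\Tr Q=1$. The (free) multipliers of the off-diagonal equality constraints $X_{i,m+j}=a_{ij}$ form an $m\times n$ matrix $W$, and eliminating the positive-semidefinite slack attached to $X\succeq 0$ leaves exactly the requirement that $\left(\begin{smallmatrix}P & W\\ W^T & Q\end{smallmatrix}\right)\succeq 0$, with dual objective $2\langle W,A\rangle$. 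The primal is strictly feasible --- $X=\left(\begin{smallmatrix}cI & A\\ A^T & cI\end{smallmatrix}\right)$ is positive definite whenever $c>\|A\|_{2\to 2}$, and then any $t>c$ is a Slater point --- so strong duality holds and $\enorm{A}$ equals the dual optimum, which is attained since the dual feasible set is compact.

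It then remains to simplify the dual. Fix $P,Q$, assuming first that both are nonsingular (if $P_{ii}=0$ or $Q_{jj}=0$ then positive semidefiniteness of the block matrix forces the corresponding row or column of $W$ to vanish, so one may restrict to the supports). By the Schur-complement criterion, $\left(\begin{smallmatrix}P & W\\ W^T & Q\end{smallmatrix}\right)\succeq 0$ is equivalent to $\|P^{-1/2}WQ^{-1/2}\|_{2\to 2}\le 1$; writing $W=P^{1/2}RQ^{1/2}$ with $\|R\|_{2\to 2}\le 1$ we get $\langle W,A\rangle=\langle R,P^{1/2}AQ^{1/2}\rangle$, and maximizing over the spectral-norm unit ball yields $\max_W 2\langle W,A\rangle=2\|P^{1/2}AQ^{1/2}\|_*$ by the duality between the spectral norm and the nuclear norm (the maximum attained at $R=UV^T$ for a singular value decomposition $P^{1/2}AQ^{1/2}=U\Sigma V^T$). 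Finally I would pass from the constraint $\Tr P+\Tr Q=1$ with objective $2\|P^{1/2}AQ^{1/2}\|_*$ to the constraint $\Tr P=\Tr Q=1$ with objective $\|P^{1/2}AQ^{1/2}\|_*$: the quantity $\|P^{1/2}AQ^{1/2}\|_*$ is invariant under $(P,Q)\mapsto(cP,c^{-1}Q)$, so given feasible $P,Q$ with $\Tr P=a$, $\Tr Q=b$, $a+b=1$, and normalized $P'=P/a$, $Q'=Q/b$, we have $2\|P^{1/2}AQ^{1/2}\|_*=2\sqrt{ab}\,\|(P')^{1/2}A(Q')^{1/2}\|_*\le\|(P')^{1/2}A(Q')^{1/2}\|_*$ by AM--GM, while the substitution $(P',Q')\mapsto(P'/2,Q'/2)$ shows the two maxima coincide.

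\emph{Main obstacle.} The mathematical ingredients are all standard (Lagrangian SDP duality with Slater's condition, Schur complements, and duality of the Schatten $1$- and $\infty$-norms); the only points needing genuine care are the bookkeeping --- in particular the factor $2$ arising when the bilinear term $\sum_{ij}z_{ij}X_{i,m+j}$ is rewritten as a symmetric matrix inner product, which is precisely what makes the final normalization come out as $\Tr P=\Tr Q=1$ rather than $\Tr P+\Tr Q=1$ --- together with the degenerate cases where $P$ or $Q$ is singular, for which the Schur-complement step must be carried out on the supports.
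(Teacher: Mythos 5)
Your proof is correct. Note, however, that the paper does not prove this theorem at all: it is quoted verbatim from Lee, Shraibman, and \v{S}palek \cite{LeeSS08} (their Theorem~9), so there is no in-paper argument to compare against. What you have reconstructed is essentially the standard SDP-duality derivation that the cited reference carries out: the weak-duality half via $\|P^{1/2}AQ^{1/2}\|_*\le\|P^{1/2}B\|_{\mathrm F}\|CQ^{1/2}\|_{\mathrm F}$ is clean and exactly the easy direction one wants; Slater's condition holds with the point $X=\bigl(\begin{smallmatrix}cI&A\\A^T&cI\end{smallmatrix}\bigr)$ as you say; the Schur-complement reduction $\bigl(\begin{smallmatrix}P&W\\W^T&Q\end{smallmatrix}\bigr)\succeq 0 \Leftrightarrow \|P^{-1/2}WQ^{-1/2}\|_{2\to 2}\le 1$ (on the supports) and the $\|\cdot\|_*$--$\|\cdot\|_{2\to 2}$ duality give $\max_W 2\langle W,A\rangle = 2\|P^{1/2}AQ^{1/2}\|_*$; and the passage from the constraint $\Tr P+\Tr Q=1$ to $\Tr P=\Tr Q=1$ via the $(P,Q)\mapsto(cP,c^{-1}Q)$ invariance and AM--GM is exactly the right bookkeeping. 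So this is a complete and faithful reproof of the cited result rather than a different route.
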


In particular,
several times we will use this theorem with $A$ a square matrix and
$P=Q=\frac 1n I_n$, in which case it gives $\enorm A\ge
\frac1n\|A\|_*$.

\subsection{Putting matrices side-by-side}

We can strengthen the triangle inequality for $\gamma_2$ when the
matrices have disjoint supports. 

\heading{On ellipsoids. }
An ellipsoid $E$ in $\R^m$
is often defined as $\{x\in\R^m: x^TMx\le 1\}$, where
$M$ is a positive definite matrix. Here we will mostly work with
the \emph{dual matrix} $D=M^{-1}$. Using this dual matrix
we have (see, e.g., \cite{combinElli})
\begin{equation}\label{e:duel} 
E =E(D)= \{ z\in \R^m : z^Tx \leq \sqrt{x^TDx} \mbox{ for all } x\in \R^m\}.
\end{equation}
This definition can also be used for $D$ only positive semidefinite;
if $D$ is singular, then $E(D)$ is a flat (lower-dimensional)
ellipsoid.

We will use the following formula for $\|E(D)\|_\infty$:
\begin{lemma}\label{l:dual-e8}
  For any $m\times m$ positive semidefinite matrix $D$,
  $\|E(D)\|_\infty = \max_i \sqrt{d_{ii}}$.
\end{lemma}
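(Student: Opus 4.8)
The plan is to prove the two inequalities $\|E(D)\|_\infty \ge \max_i \sqrt{d_{ii}}$ and $\|E(D)\|_\infty \le \max_i \sqrt{d_{ii}}$ separately, working directly from the characterization \eqref{e:duel} of $E(D)$. For the lower bound, I would exhibit, for each index $i$, a point of $E(D)$ whose $i$th coordinate has absolute value $\sqrt{d_{ii}}$. The natural candidate is $z = D e_i / \sqrt{d_{ii}}$ (assuming $d_{ii}>0$; if $d_{ii}=0$ there is nothing to check for that index since then $D e_i = 0$ by positive semidefiniteness, so that coordinate vanishes on all of $E(D)$). To see $z \in E(D)$, I must verify $z^T x \le \sqrt{x^T D x}$ for every $x$; since $z^T x = (e_i^T D x)/\sqrt{d_{ii}}$, this is exactly the Cauchy–Schwarz inequality for the positive semidefinite bilinear form $\langle u, v\rangle_D := u^T D v$, namely $(e_i^T D x)^2 \le (e_i^T D e_i)(x^T D x) = d_{ii}\, x^T D x$. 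Then the $i$th coordinate of $z$ is $e_i^T D e_i/\sqrt{d_{ii}} = \sqrt{d_{ii}}$, so $\|E(D)\|_\infty \ge \sqrt{d_{ii}}$, and taking the max over $i$ gives one direction.

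For the upper bound, I would take an arbitrary $z \in E(D)$ and bound $|z_i| = |e_i^T z|$ for each $i$. By definition \eqref{e:duel} applied with $x = e_i$, we get $e_i^T z \le \sqrt{e_i^T D e_i} = \sqrt{d_{ii}}$, and applying it with $x = -e_i$ gives $-e_i^T z \le \sqrt{d_{ii}}$; hence $|z_i| \le \sqrt{d_{ii}} \le \max_j \sqrt{d_{jj}}$. Since $z$ and $i$ were arbitrary, $\|E(D)\|_\infty = \sup_{z \in E(D)} \max_i |z_i| \le \max_i \sqrt{d_{ii}}$. Combining the two bounds yields the claimed equality.

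I do not expect any serious obstacle here; the only points needing minor care are the degenerate case $d_{ii}=0$ in the lower-bound argument (handled by noting $D\succeq 0$ forces the $i$th row and column of $D$ to vanish, so the $i$th coordinate is identically zero on $E(D)$, consistent with $\sqrt{d_{ii}}=0$), and making sure the supremum defining $\|E(D)\|_\infty$ is actually attained — which it is, by the explicit witness $De_i/\sqrt{d_{ii}}$ constructed above, so no compactness argument is even needed. The whole proof is essentially two applications of Cauchy–Schwarz for the form $x^T D y$, once in each direction.
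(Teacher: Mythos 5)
Your proof is correct and follows essentially the same route as the paper's: the upper bound by testing the defining inequality of $E(D)$ at $x=\pm e_i$, and the lower bound by the explicit witness $De_{i}/\sqrt{d_{ii}}$ whose membership in $E(D)$ is Cauchy--Schwarz for the form $u^TDv$. The paper just picks the single maximizing index $i_0$ rather than doing all $i$, so it never needs the degenerate $d_{ii}=0$ case you carefully handle; otherwise the arguments coincide.
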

\begin{proof}
  Let $t:= \max_i \sqrt{d_{ii}}$, and let $e_i$ be the $i$-th standard
  basis vector of $\R^m$. By the definition of $E(D)$, we have that
  $\forall z \in E(D): z_i = z^T e_i \leq \sqrt{e_i^T De_i} =
  \sqrt{d_{ii}}$, and, similarly, $-z_i \leq \sqrt{d_{ii}}$. This
  implies that for any $z \in E(D)$, $\|z\|_\infty \leq t$, and,
  therefore, $\|E(D)\|_\infty \leq t$. Next we show that there exists
  a point $z \in E(D)$ such that $\|z\|_\infty \geq t$, which implies
  $\|E(D)\|_\infty \geq t$ as well. Let $i_0$ be such that
  $\sqrt{d_{i_0, i_0}} = t$, and define $z:= De_{i_0}/t$. Then, by the
  Cauchy-Schwarz inequality
  \[
  \forall x \in \R^m: z^T x = \frac{1}{t} e_{i_0}^T D x \leq
\frac1t \sqrt{(e_{i_0}^TDe_{i_0})(x^TDx)} = \sqrt{x^TDx},
  \]
  so $z \in E(D)$. Moreover, $\|z\|_\infty \geq z_{i_0} = d_{i_0,i_0}/t = t$.
\end{proof}

\begin{lemma}\label{l:union}
 Let $A,B$ be matrices, each with $m$ rows, and let
$C$ be a matrix in which each column is a column of $A$ or of $B$.
Then
\[
\enorm C^2\le\enorm A^2+\enorm B^2.
\]
\end{lemma}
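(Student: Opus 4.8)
The plan is to prove this via the geometric (ellipsoid) formulation of $\gamma_2$. The key idea is that an ellipsoid witnessing $\enorm{A}$ and an ellipsoid witnessing $\enorm{B}$ can be combined into a single ellipsoid whose $\ell_\infty$ norm is controlled by the \emph{sum of squares} of the two individual $\ell_\infty$ norms, and which contains all the columns of both $A$ and $B$ (hence in particular all columns of $C$, each of which is a column of $A$ or of $B$). Since, by monotonicity \eqref{e:monotone}, $\enorm{C}\le \enorm{[A\mid B]}$ where $[A\mid B]$ is the matrix obtained by concatenating the columns of $A$ and $B$, it suffices to bound $\enorm{[A\mid B]}^2\le \enorm{A}^2+\enorm{B}^2$.

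First I would fix optimal factorizations. Let $E(D_A)$ be a $0$-centered ellipsoid of minimal $\ell_\infty$ norm containing the columns of $A$, so that by the geometric definition of $\enorm{\cdot}$ and Lemma~\ref{l:dual-e8}, $\enorm{A}=\|E(D_A)\|_\infty=\max_i\sqrt{(D_A)_{ii}}$; similarly take $D_B$ with $\enorm{B}=\max_i\sqrt{(D_B)_{ii}}$. The natural candidate for the combined ellipsoid is $E(D_A+D_B)$: since $D_A,D_B$ are positive semidefinite, so is $D_A+D_B$, and from the dual description \eqref{e:duel} one checks directly that $E(D_A)\cup E(D_B)\subseteq E(D_A+D_B)$ (if $z^Tx\le\sqrt{x^TD_Ax}$ for all $x$, then a fortiori $z^Tx\le\sqrt{x^T(D_A+D_B)x}$). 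Hence $E(D_A+D_B)$ contains every column of $A$ and every column of $B$, so it contains every column of $C$. Applying Lemma~\ref{l:dual-e8} to $D_A+D_B$ gives
\[
\enorm{C}^2\le \|E(D_A+D_B)\|_\infty^2=\max_i\big((D_A)_{ii}+(D_B)_{ii}\big)\le \max_i(D_A)_{ii}+\max_i(D_B)_{ii}=\enorm{A}^2+\enorm{B}^2,
\]
which is exactly the claimed bound.

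The only point that needs a little care is the passage from "minimal-$\ell_\infty$-norm ellipsoid containing the columns" back to $\enorm{\cdot}$, including the degenerate case where $A$ (or $B$) has rank less than $m$ and the optimal ellipsoid is flat; but Lemma~\ref{l:dual-e8} is stated for positive semidefinite $D$ precisely to cover this, and the equivalence between the operator and geometric definitions of $\gamma_2$ was recorded in the excerpt, so no genuine obstacle arises there. An alternative, purely algebraic route would be to take factorizations $A=B_AC_A$ and $B=B_BC_B$ and form the block factorization with $B_A,B_B$ side by side and $C_A,C_B$ stacked block-diagonally; the $1\to 2$ norm of the stacked $C$-matrix is the max of the two column norms, while the $2\to\infty$ norm of $[B_A\mid B_B]$ is governed by $\sqrt{\|B_A\|_{2\to\infty}^2+\|B_B\|_{2\to\infty}^2}$ since each row of the combined matrix is a concatenation of a row of $B_A$ and a row of $B_B$. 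This gives the same estimate and the Euclidean "$\sqrt{a^2+b^2}$" behavior is the heart of the matter either way; I expect the ellipsoid argument to be the cleanest to write up.
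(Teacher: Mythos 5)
Your proof is correct and follows essentially the same route as the paper: combine the dual matrices $D_A+D_B$, observe $E(D_A)\cup E(D_B)\subseteq E(D_A+D_B)$ by positive semidefiniteness, and apply Lemma~\ref{l:dual-e8}. The only (cosmetic) difference is that the paper first replaces $A,B$ by their zero-padded submatrices $\tilde A,\tilde B$ whose nonzero columns are exactly those of $C$, which is unnecessary for the stated bound.
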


\begin{proof} After possibly reordering
the columns of $C$, we can write $C=\tilde A+\tilde B$, where the first $k$ columns
of $\tilde A$ are among the columns of $A$ and the remaining $\ell$ columns are
zeros, and the last $\ell$ columns of $\tilde B$ are among the columns of $B$
and the first $k$ are zeros.
By~\eqref{e:monotone}, $a:=\enorm {\tilde A}\le \enorm A$,
$b:=\enorm{\tilde B}\le\enorm B$.

We will work with the geometric definition of $\gamma_2$. Let
$E_1=E(D_1)$ and $E_2=E(D_2)$ be ellipsoids witnessing $\enorm{\tilde
  A}$ and $\enorm{\tilde B}$, respectively.  We claim that the
ellipsoid $E(D_1+D_2)$ contains all columns of $A$ and also all
columns of $B$. This is clear from the definition of the ellipsoid
$E(D)=\{z:z^Tx\le\sqrt{x^TDx}\mbox{ for all }x\}$, since for every
$x$, we have \[x^T(D_1+D_2)x=x^TD_1x+x^TD_2x\ge \max\{ x^TD_1x, x^TD_2x\}\] by the positive
semidefiniteness of $D_1$ and $D_2$. All the diagonal entries of $D_1$ are
bounded above by $a^2$, those of $D_2$ are at most $b^2$, and hence
$\|E\|_\infty\le \sqrt{a^2+b^2}$ by Lemma~\ref{l:dual-e8}. 
\end{proof}

\begin{lemma}\label{l:disjsupp}
If $C$ is a block-diagonal matrix with blocks $A$ and $B$ on the diagonal,
then $\enorm C=\max\{\enorm A,\enorm B\}$.
\end{lemma}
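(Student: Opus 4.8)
The statement is an equality, so the plan is to prove the two inequalities $\enorm C\ge\max\{\enorm A,\enorm B\}$ and $\enorm C\le\max\{\enorm A,\enorm B\}$ separately. The first comes for free: $A$ and $B$ are both submatrices of the block-diagonal matrix $C$, so the monotonicity property~\eqref{e:monotone} immediately gives $\enorm A\le\enorm C$ and $\enorm B\le\enorm C$. Hence all the content lies in the upper bound, and I would prove it with the geometric (ellipsoid) description of $\gamma_2$, mimicking the proof of Lemma~\ref{l:union} but taking advantage of the fact that $A$ and $B$ are supported on disjoint sets of coordinates.

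For the upper bound, let $A$ have $m_A$ rows, $B$ have $m_B$ rows, and $C$ have $m=m_A+m_B$ rows. I would pick positive semidefinite matrices $D_A$ (of size $m_A$) and $D_B$ (of size $m_B$) for which $E(D_A)$ contains all columns of $A$ with $\|E(D_A)\|_\infty=\enorm A$, and $E(D_B)$ contains all columns of $B$ with $\|E(D_B)\|_\infty=\enorm B$; such matrices exist by the geometric definition of $\gamma_2$. Then I would form the block-diagonal matrix $D=\diag(D_A,D_B)$, which is again positive semidefinite, and consider the ellipsoid $E(D)\subseteq\R^m$.

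The key step is to check that $E(D)$ contains every column of $C$. A column of $C$ inherited from the $A$-block has the form $z=\binom{a}{0}$ with $a$ a column of $A$; for an arbitrary $x=\binom{x_A}{x_B}\in\R^m$, using $a\in E(D_A)$, the definition~\eqref{e:duel}, and the positive semidefiniteness of $D_B$,
\[
z^Tx=a^Tx_A\le\sqrt{x_A^TD_Ax_A}\le\sqrt{x_A^TD_Ax_A+x_B^TD_Bx_B}=\sqrt{x^TDx},
\]
so $z\in E(D)$, and the columns inherited from the $B$-block are handled symmetrically. Since the diagonal of $D$ is the concatenation of the diagonals of $D_A$ and $D_B$, Lemma~\ref{l:dual-e8} yields $\|E(D)\|_\infty=\max_i\sqrt{d_{ii}}=\max\{\|E(D_A)\|_\infty,\|E(D_B)\|_\infty\}=\max\{\enorm A,\enorm B\}$, and therefore $\enorm C\le\|E(D)\|_\infty=\max\{\enorm A,\enorm B\}$.

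I do not expect a real obstacle; the only step needing a little care is the containment check, which is precisely where the vanishing off-diagonal blocks of $D$ are used. One could also argue purely algebraically: from factorizations $A=B_AC_A$, $B=B_BC_B$ rescaled so that $\|B_A\|_{2\to\infty}=\|C_A\|_{1\to2}=\sqrt{\enorm A}$ and $\|B_B\|_{2\to\infty}=\|C_B\|_{1\to2}=\sqrt{\enorm B}$, the factorization $C=\diag(B_A,B_B)\cdot\diag(C_A,C_B)$ has $\|\diag(B_A,B_B)\|_{2\to\infty}\,\|\diag(C_A,C_B)\|_{1\to2}=\max\{\enorm A,\enorm B\}$, giving the same bound.
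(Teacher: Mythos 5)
Your proof is correct and takes essentially the same route as the paper: monotonicity for the lower bound, and for the upper bound forming the block-diagonal dual matrix $D=\diag(D_A,D_B)$, verifying the containment via \eqref{e:duel}, and reading off $\|E(D)\|_\infty$ from Lemma~\ref{l:dual-e8}. Your containment check is spelled out a bit more explicitly than the paper's (which just remarks it is "easy to check"), and the alternative factorization-based argument you sketch at the end is a nice bonus, but there is no substantive difference.
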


\begin{proof}
  The inequality $\enorm C \geq \max\{\enorm A,\enorm B\}$ is a direct
  consequence of \eqref{e:monotone}. Next we prove the reverse direction.
If $D_1$ is the dual matrix of the ellipsoid witnessing $\enorm A$ and
similarly for $D_2$ and $B$, then the block-diagonal matrix $D$ with
blocks $D_1$ and $D_2$ on the diagonal defines an ellipsoid containing
all columns of~$C$. This is easy to check using
the formula \eqref{e:duel} defining $E(D)$ and the fact
that a sum of positive definite matrices is positive definite. The
inequality $\enorm C \leq \max\{\enorm A,\enorm B\}$ then follows from
Lemma~\ref{l:dual-e8}. 
\end{proof}

\cut{
\subsection{Kronecker product}

Let $A$ be an $m\times n$ matrix and $B$ a $p\times q$ matrix.
We recall that the \emph{Kronecker product} $A\otimes B$ is 
the following $mp\times nq$ matrix, consisting of $m\times n$
blocks of size $p\times q$ each:
\[
\begin{pmatrix} a_{11}B& a_{12}B&\ldots& a_{1n}B\\ 
                 \vdots &\vdots&\vdots&\vdots\\
                a_{m1}B& a_{m2}B&\ldots&a_{mn}B
\end{pmatrix}
\]

In the proof, we will use several well-known (and easy) properties of the
Kronecker product (see, e.g., \cite{LaubMatAn}---for the
properties not listed there explicitly we give a short argument):   
\begin{enumerate}
\item[(KP1)] $(A\otimes C)(B\otimes D)=AB\otimes CD$ whenever
the dimensions of $A,B,C,D$ match appropriately.
\item[(KP2)]  $(A\otimes B)^{T}=A^{T}\otimes B^{T}$.
\item[(KP3)]  $(A\otimes B)^{-1}=A^{-1}\otimes B^{-1}$.
\item[(KP4)] If $A,B$ are positive (semi)definite, then
so is $A\otimes B$.
\item[(KP5)] If $\sigma_1,\ldots,\sigma_n$ are the singular
values of $A$ and $\tau_1,\ldots,\tau_q$ are the singular values
of $B$, then the singular values of $A\otimes B$
are $\sigma_i\tau_j$, $i=1,2\ldots,n$, $j=1,2\ldots,q$.
Consequently, $\|A\otimes B\|_*=\|A\|_*\cdot\|B\|_*$.
\end{enumerate}

We will also use the Kronecker product $x\otimes y$
of vectors $x\in\R^m$ and $y\in\R^n$, which is a vector in $\R^{mn}$
(we regard $x$ and $y$ as one-column matrices and use the matrix 
definition).

\begin{proof}[Proof of Theorem~\ref{t:krone}]
Let $A$ be an $m\times n$ matrix and let $B$ be a $p\times q$ matrix.
Let us write $\alpha=\enorm A$ and $\beta=\enorm B$.

First we show the inequality $\enorm {A\otimes B}\le\alpha\beta$.
Let $\eps>0$ be arbitrarily small, let
$E_1=E(D_1)$ be a full-dimensional ellipsoid containing all the columns
of $A$ with $\|E_1\|_\infty\le \alpha+\eps$, 
and similarly for $E_2=E(D_2)$ and $B$.

We set $D=D_1\otimes D_2$; by (KP4), this is a positive definite matrix defining
an ellipsoid $E=E(D)$. Since the diagonal of $D$ contains the products
$d^{(1)}_{ii}d^{(2)}_{jj}$, and $\|E\|_{\infty}$ is the maximum of the
square roots of the diagonal elements, we have $\|E\|_{\infty}
\le (\alpha+\eps)(\beta+\eps)$. 

It remains to check that $E$ contains all columns of $A\otimes B$.
The column of $A\otimes B$ with index $q(j-1)+\ell$ 
is $a_j\otimes b_\ell$, where $a_j$ is the $j$th column of $A$
and $b_\ell$ is the $\ell$th column of $B$.
Every  $a_j$ lies in $E_1=E(D_1)$ and so it satisfies $a_j^TD_1^{-1}a_j\le 1$,
and similarly $b_\ell^TD_2^{-1}b_\ell\le 1$.

Then we have, using (KP1)--(KP3),
 $(a_j\otimes b_\ell)^TD^{-1}(a_j\otimes b_\ell)=
(a_j^T\otimes b_\ell^T)(D_1^{-1}\otimes D_2^{-1})(a_j\otimes b_\ell)=
(a_j^TD_1^{-1}\otimes b_\ell^TD_2^{-1})(a_j\otimes b_\ell)=
a_j^TD_1^{-1}a_j\cdot b_\ell^TD_2^{-1}b_\ell\le 1$, and
so $a_j\otimes b_\ell\in E$ as needed. Since $\eps>0$ was arbitrary,
we arrive at $\enorm {A\otimes B}\le\alpha\beta$

It remains to prove the opposite inequality.
One way is via Theorem~\ref{t:e8dual}. Let $P_1,Q_1$ be nonnegative
diagonal unit-trace matrices with $\|P_1^{1/2}AQ_1^{1/2}\|_*=\alpha$,
and similarly for $P_2$, $Q_2$, and $B$. Then 
$P=P_1\otimes P_2$ is nonnegative, diagonal, and unit-trace, and
similarly for $Q=Q_1\otimes Q_2$. We also have $P^{1/2}=
P_1^{1/2}\otimes P_2^{1/2}$ and $Q^{1/2}=Q_1^{1/2}\otimes Q_2^{1/2}$. Hence
\begin{eqnarray*}
\enorm{A\otimes B}&\ge &\|P^{1/2}(A\otimes B)Q^{1/2}\|_*\\
&=& \|(P_1^{1/2}AQ_1^{1/2})\otimes (P_2^{1/2}BQ_2^{1/2})\|_*\\
&=& \|P_1^{1/2}AQ_1^{1/2}\|_*\cdot \|P_2^{1/2}BQ_2^{1/2}\|_*=\alpha\beta,
\end{eqnarray*}
where we used (KP5) in the penultimate equality.

An alternative proof of the inequality $\enorm {A\otimes B}\ge\alpha\beta$ 
can be given using the dual semidefinite characterization of $\enorm A$
in Section~\ref{s:semidef}. Starting with optimal solutions for the
dual semidefinite programs for $\enorm A$ and $\enorm B$,
one obtains a solution for $A\otimes B$ with value $\alpha\beta$,
by taking tensor products; we omit the details.
%
%
%
\end{proof}
}

\section{Relating the $\gamma_2$ norm and hereditary
  discrepancy}\label{s:devi}

Here we prove the inequalities \eqref{e:NTineq1} and \eqref{e:NTineq2}
relating $\enorm.$ and $\herdisc(.)$. We also argue that the
\eqref{e:NTineq2} is asymptotically tight. In Section~\ref{s:ints} we
will give an example on which \eqref{e:NTineq1} is asymptotically
tight as well.

\cut{We have already seen in Section~\ref{s:ints} that 
\eqref{e:NTineq1} is asymptotically tight. 
Let us first mention a simple but perhaps useful observation, which
gives a somewhat weaker result.

There are examples of set systems $\FF_1,\FF_2$ on an $n$-point
set $X$ such that $|\FF_1|,|\FF_2|=O(n)$,
$\herdisc\FF_1$ and $\herdisc \FF_2$ are
bounded by a constant (actually by $1$), and $\herdisc(\FF_1\cup\FF_2)=
\Omega(\log n)$  \cite{palvo-wedges,newman2012beck}.
Therefore, no
quantity obeying the triangle inequality (possibly up to a constant),
such as the $\gamma_2$ norm, 
can approximate $\herdisc$ with a factor better than~$\log n$.}

\subsection{The $\gamma_2$ norm is at most \boldmath$\log m$ times
herdisc}\label{s:simplified-ineq1}


We will actually
establish the following inequalities relating the $\gamma_2$
norm to the determinant lower bound.

\begin{theorem}\label{t:detlb-E8}
  For any $m \times n$ matrix $A$ of rank $r$,
  \begin{equation*}
    \detlb A \leq \enorm A \leq O(\log r)\cdot \detlb A.
  \end{equation*}
\end{theorem}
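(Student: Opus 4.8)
The plan is to establish the two inequalities separately. The left inequality $\detlb A \le \enorm A$ is the easy direction: it should follow from the dual characterization in Theorem~\ref{t:e8dual} together with the monotonicity \eqref{e:monotone} of $\gamma_2$ under passing to submatrices. Indeed, let $B$ be a $k\times k$ submatrix of $A$ with $|\det B|^{1/k} = \detlb A$. By \eqref{e:monotone}, $\enorm A \ge \enorm B$, so it suffices to show $\enorm B \ge |\det B|^{1/k}$ for a square matrix $B$. Applying Theorem~\ref{t:e8dual} with $P = Q = \frac1k I_k$ gives $\enorm B \ge \frac1k\|B\|_*$. Since $\|B\|_* = \sum_i \sigma_i(B)$ is the sum of the $k$ singular values of $B$, the AM--GM inequality yields $\frac1k\|B\|_* \ge \bigl(\prod_i \sigma_i(B)\bigr)^{1/k} = |\det B|^{1/k}$, which is exactly what we need.

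For the right inequality $\enorm A \le O(\log r)\cdot \detlb A$, the natural route is to construct a good ellipsoid containing the columns of $A$ whose $\ell_\infty$ norm is controlled by $\detlb A$, using the determinant as the key invariant. The standard tool here is the minimum-volume (John/L\"owner) ellipsoid of the column set, or more precisely an ellipsoid obtained by iteratively picking a maximal-volume simplex (or parallelepiped) spanned by columns of $A$ — this is precisely the kind of argument that connects the determinant lower bound to discrepancy upper bounds. The idea: restrict to the column space of $A$ (dimension $r$), let $E$ be a minimum-volume $0$-symmetric ellipsoid containing all columns of $A$; then every column lies in $E$, and one bounds the "extent" of $E$ in each coordinate direction in terms of volumes of simplices spanned by columns, which are in turn bounded by $(\detlb A)^k$ for appropriate $k$. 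A clean way to package this is: choose columns $a_{j_1},\dots,a_{j_r}$ of $A$ maximizing $|\det[a_{j_1}\cdots a_{j_r}]|$ (a maximal-volume basis of the column space); this determinant is at most $(\detlb A)^r$. Writing every other column in this basis, the maximality forces all coordinates to lie in $[-1,1]$, so $A$ factors through a well-conditioned basis; a John-type argument then shows the John ellipsoid of the columns has $\ell_\infty$ norm $O(\sqrt r)\cdot(\text{something})$ — but $\sqrt r$ is too weak, so one must do better.

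The sharper bound with only a $\log r$ factor should come from the recursive/peeling argument of \cite{almosttight}: after extracting a maximal-volume basis (contributing a bounded-norm "level"), the residual columns, suitably rescaled, again have small $\detlb$, and one recurses on a subspace, accumulating $O(\log r)$ levels each contributing $O(1)$ to the $\ell_\infty$ norm, with the ellipsoids at different levels combined via Lemma~\ref{l:union} (the $\gamma_2^2 \le \gamma_2^2 + \gamma_2^2$ inequality for side-by-side column blocks). Concretely: repeatedly select a maximal-volume set of columns, which after a change of basis have all entries in $[-1,1]$ and hence $\gamma_2 = O(\sqrt{\text{dim}})$ — no wait, one wants these blocks to have bounded $\gamma_2$; this is where the argument needs care, and this is the main obstacle. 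The cleanest implementation is likely: show $\enorm A \le O(\log r)\cdot \max_k \max_B |\det B|^{1/k}$ by a dyadic decomposition of singular values combined with the volumetric bound, i.e. split the columns of $A$ according to the scale of the maximal sub-determinant they participate in, handle each scale with an ellipsoid of controlled $\ell_\infty$ norm, and combine the $O(\log r)$ scales via Lemma~\ref{l:union}. I expect the bookkeeping in this decomposition — ensuring each of the $O(\log r)$ pieces contributes only $O(\detlb A)$ to the combined ellipsoid's norm, and that their supports interact correctly with Lemma~\ref{l:union} — to be the delicate part; the volumetric estimates themselves (maximal-volume basis, AM--GM on singular values) are routine.
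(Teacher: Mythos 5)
Your proof of the easy inequality $\detlb A \le \enorm A$ is correct and is exactly what the paper does: select a $k\times k$ submatrix $B$ achieving $\detlb A$, apply Theorem~\ref{t:e8dual} with $P=Q=\frac1kI_k$ to get $\enorm B \ge \frac1k\|B\|_*$, apply AM--GM on the singular values to get $\frac1k\|B\|_* \ge |\det B|^{1/k}$, and conclude via monotonicity~\eqref{e:monotone}.

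For the right inequality $\enorm A \le O(\log r)\cdot\detlb A$, your plan has a genuine gap, and the paper in fact goes by a completely different route. The most concrete problem: you propose splitting the columns into $O(\log r)$ classes each contributing an ellipsoid with $\ell_\infty$ norm $O(\detlb A)$ and combining via Lemma~\ref{l:union}. But Lemma~\ref{l:union} combines \emph{quadratically}, so $O(\log r)$ classes each with $\gamma_2 \le O(\detlb A)$ would give $\enorm A \le O(\sqrt{\log r})\cdot\detlb A$. That bound is provably \emph{false}: for the lower-triangular matrix $T_n$, the paper shows $\enorm{T_n}=\Theta(\log n)$ while $\detlb T_n=1$, so the $O(\log r)$ factor is tight. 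Any decomposition argument must therefore either combine additively (via the triangle inequality, Proposition~\ref{p:mnorm}) or allow individual pieces whose $\gamma_2$ grows; the column-bucketing-plus-Lemma~\ref{l:union} scheme cannot be patched. A secondary issue is that your ``maximal-volume basis of columns'' step conflates two different quantities: the columns of $A$ live in $\R^m$ and the volume of a parallelepiped they span is a Gram determinant, whereas $\detlb A$ is defined via determinants of square submatrices (which requires choosing rows as well); bridging this gap is exactly nontrivial and you do not address it.

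The paper's actual argument is \emph{dual} rather than primal, and avoids ellipsoid constructions entirely. Starting from Theorem~\ref{t:e8dual}, one writes $\enorm A = \|\tilde A\|_*$ with $\tilde A = P_0^{1/2}AQ_0^{1/2}$, and the bucketing is performed on the \emph{singular values} of $\tilde A$, not on columns. A single bucket $K$ where the singular values are within a factor of $2$ of each other, and which carries an $\Omega(1/\log r)$ fraction of the nuclear norm, is extracted. On that bucket AM--GM is essentially tight, giving $|\det BB^T|^{1/2k} \gtrsim \frac{1}{k\log r}\enorm A$ for the corresponding $k\times m$ ``slice'' $B = U_K^T\tilde A$ of the SVD. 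The remaining work is to pass from $\det BB^T$ to the determinant of an actual $k\times k$ submatrix of $A$; this is done by a weighted Binet--Cauchy lemma (Lemma~\ref{l:binetcauchy}) applied twice, once to choose columns (using $Q_0$ as the weights) and once to choose rows (using $P_0$), each application gaining a factor $\sqrt{k/e}$. These two $\sqrt{k/e}$ factors exactly cancel the $1/k$ loss from AM--GM, and the $O(\log r)$ from the bucketing is the only loss that survives. You would be well served to internalize why the primal ``peel off a John ellipsoid'' approach hits a wall here (and gives the weaker bounds of~\cite{almosttight}), while the dual characterization turns the problem into one about singular values that a straight bucketing argument handles cleanly.
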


Inequality \eqref{e:NTineq1} is an immediate consequence of
the second inequality in the theorem (and of $r \leq \min\{m,n\}$):
\[
\enorm A\le O(\log \min\{m,n\})\cdot \detlb A\le O(\log
\min\{m,n\})\herdisc A,
\]
where the last inequality uses the Lov\'asz--Spencer--Vesztergombi
bound $\herdisc A\ge\frac12\detlb A$. 

First we prepare a lemma for the proof of Theorem~\ref{t:detlb-E8};
it is similar to an argument in~\cite{almosttight}.
As a motivation, we recall the Binet--Cauchy formula:
if $A$ is a $k\times n$ matrix, $k\le n$, then
$\det AA^T=\sum_{J} (\det A_J)^2$, where the sum is over
all $k$-element subsets $J\subseteq[n]$, and $A_J$ denotes the
submatrix of $A$ consisting of the columns indexed by $J$.
Consequently, for at least one of the $J$'s we have
$(\det A_J)^2\ge {n\choose k}^{-1}\det AA^T$. The next lemma
is a weighted version of this argument, where the columns of $A$
are given nonnegative real weights.

\begin{lemma}\label{l:binetcauchy}
  Let $A$ be an $k\times n$ matrix, and let $W$ be a
nonnegative diagonal unit-trace $n\times n$ matrix.
  Then there exists a $k$-element set $J \subseteq [n]$ such that
\begin{equation*}
   |\det A_J|^{1/k} \geq \sqrt{k/e}\cdot|\det AWA^T|^{1/2k}.    
\end{equation*}
\end{lemma}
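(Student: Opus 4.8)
The plan is to prove Lemma~\ref{l:binetcauchy} as a weighted generalization of the Binet--Cauchy argument sketched just before it, where the weights come from the diagonal matrix $W$. The key identity is that, writing $W^{1/2}$ for the nonnegative square root of $W$, we have
\[
\det(AWA^T) = \det\bigl((AW^{1/2})(AW^{1/2})^T\bigr) = \sum_{J} \bigl(\det (AW^{1/2})_J\bigr)^2,
\]
by the ordinary Binet--Cauchy formula applied to the $k\times n$ matrix $AW^{1/2}$, with the sum over all $k$-element subsets $J\subseteq[n]$. Now $(AW^{1/2})_J = A_J \cdot (W^{1/2})_{J,J} = A_J \cdot \diag(\sqrt{w_j}: j\in J)$, so $\det (AW^{1/2})_J = \det A_J \cdot \prod_{j\in J}\sqrt{w_j}$, and hence
\[
\det(AWA^T) = \sum_{J} (\det A_J)^2 \prod_{j\in J} w_j .
\]

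The next step is a weighted averaging/pigeonhole argument to extract a good $J$. Think of $\mu(J) := \prod_{j\in J} w_j$ as (unnormalized) weights on $k$-subsets; then $\det(AWA^T) = \sum_J (\det A_J)^2 \mu(J) \le \bigl(\max_J (\det A_J)^2\bigr)\cdot \sum_J \mu(J)$. By the elementary symmetric polynomial identity, $\sum_{|J|=k}\prod_{j\in J} w_j = e_k(w_1,\dots,w_n)$, the $k$-th elementary symmetric function of the diagonal entries. Since $W$ has unit trace, $w_1+\cdots+w_n = 1$, and by Maclaurin's inequality (or directly by AM--GM on the $\binom nk$ terms, or by comparing $e_k$ to $\frac1{k!}(\sum w_j)^k$), we get $e_k(w_1,\dots,w_n) \le \frac{1}{k!}$. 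Combining,
\[
\max_J (\det A_J)^2 \ge k!\cdot \det(AWA^T) \ge (k/e)^k \cdot \det(AWA^T),
\]
using $k! \ge (k/e)^k$; taking $2k$-th roots gives $|\det A_J|^{1/k} \ge \sqrt{k/e}\cdot |\det(AWA^T)|^{1/2k}$ for the maximizing $J$, which is exactly the claim.

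I expect the only real point requiring care is the bound $e_k(w_1,\dots,w_n)\le 1/k!$ under $\sum w_j = 1$, $w_j\ge 0$. This follows because $e_k(w) \le \binom nk \bigl(\tfrac1n\sum w_j\bigr)^k = \binom nk n^{-k} \le \tfrac1{k!}$ by Maclaurin's inequality $e_k/\binom nk \le (e_1/\binom n1)^k$, and $\binom nk / n^k \le 1/k!$. A reader could also see it from the generating function $\prod_j(1+w_j t)\le e^{t\sum w_j} = e^t$ and extracting the coefficient of $t^k$, since $[t^k]e^t = 1/k!$ while $[t^k]\prod(1+w_jt) = e_k(w) \ge 0$ with all partial sums dominated. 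Everything else — the Binet--Cauchy expansion, the factoring of the determinant of a column-scaled matrix, and $k!\ge(k/e)^k$ — is routine, so no genuine obstacle arises; the lemma is essentially an exercise once the weighted Binet--Cauchy identity is written down.
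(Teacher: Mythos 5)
Your proof is correct and follows the paper's argument exactly: apply Binet--Cauchy to $AW^{1/2}$, bound $\sum_J\prod_{j\in J}w_j\le \tfrac1{k!}$ by observing that each $k$-subset product appears $k!$ times in the multinomial expansion of $\bigl(\sum_j w_j\bigr)^k=1$, and finish with $k!\ge(k/e)^k$. (One small caveat: the generating-function aside is not rigorous as phrased, since $\prod_j(1+w_jt)\le e^t$ for $t\ge 0$ does not by itself imply coefficientwise domination, even for series with nonnegative coefficients; fortunately your primary argument does not rely on it.)
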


\begin{proof} Applying the Binet--Cauchy formula to the matrix $AW^{1/2}$
and slightly simplifying, we have
  \begin{equation*}
    \det AWA^T = \sum_{J}{(\det A_J)^2\prod_{j \in J}{w_{jj}}}.
  \end{equation*}
Now $\sum_J \prod_{j \in J}{w_{jj}}\le  \frac{1}{k!} \bigl(\sum_{j=1}^n{w_{jj}}\bigr)^k =  \frac{1}{k!}$, because each term of the left-hand side appears
$k!$-times on the right-hand side (and the weights $w_{jj}$ are nonnegative
and sum to~1). Therefore
  \begin{eqnarray*}
    \det AWA^T &\leq &
    \Bigl(\max_J (\det A_J)^2\Bigr)
   \sum_J\prod_{j \in J}w_{jj}\\
    &\leq &\frac{1}{k!} \max_J (\det A_J)^2. 
  \end{eqnarray*}
 So there exists a $k$-element $J$ with
  \begin{equation*}
   |\det A_J|^{1/k} \geq (k!)^{1/2k}|\det AWA^T|^{1/2k} \geq 
\sqrt{k/e}\cdot |\det AWA^T|^{1/2k},
  \end{equation*}
where the last inequality follows from the estimate 
$k! \geq (k/e)^k$.
\end{proof}

\begin{proof}[Proof of Theorem~\ref{t:detlb-E8}]
For the inequality $\detlb A \leq \enorm A$, we first observe that
if $B$ is a $k\times k$ matrix, then 
\begin{equation}\label{e:AGsing}
|\det B|^{1/k}\le \frac 1k\|B\|_*
\end{equation}
Indeed, the left-hand side is the geometric mean of the singular values
of $B$, while the right-hand side is the arithmetic mean.

Now let $B$ be a $k\times k$ submatrix of $A$ with
$\detlb A=|\det B|^{1/k}$; then 
\[
\detlb A=|\det B|^{1/k}\le \frac 1k\|B\|_*\le \enorm B\le\enorm A.
\]

For the second inequality $\enorm A = O(\log m)\cdot \detlb A$,
the idea is, roughly speaking, to compare $\det BB^T$
and  the nuclear norm of $B$ for a (rectangular) matrix $B$ whose
singular values are all nearly the same, say within a factor of $2$, since 
then  the arithmetic-geometric inequality is
nearly an equality. Obtaining a suitable $B$ and relating
$\det BB^T$ to the determinant of a square submatrix of $A$
needs some work, and  it relies on Lemma~\ref{l:binetcauchy}.

First let $P_0$ and $Q_0$ be diagonal unit-trace matrices with
$\enorm A=\|P_0^{1/2}AQ_0^{1/2}\|$ as in Theorem~\ref{t:e8dual}.
For brevity, let us write $\tilde A:=P_0^{1/2}AQ_0^{1/2}$, and
let $\sigma_1\ge\sigma_2\ge\cdots\ge \sigma_r > 0$ 
be the nonzero singular values of~$\tilde A$.

By a standard bucketing argument (see, e.g., \cite[Lemma~7]{almosttight}),
there is some $t>0$ such that if we set $K:=\{i\in[m]: t\le\sigma_i<2t\}$,
then 
\[
\sum_{i\in K}\sigma_i\ge \Omega(\tfrac1{\log r})\sum_{i=1}^m\sigma_i.
\]
Let us set $k:=|K|$.

Next, we define a suitable $k\times n$ matrix with singular values
$\sigma_i$, $i\in K$. Let $\tilde A=U\Sigma V^T$ 
be the singular-value
decomposition of $\tilde A$, with $U$ and $V$ orthogonal and 
$\Sigma$ having $\sigma_1,\ldots,\sigma_r$ on the main diagonal.

Let $\Pi_K$ be the $k\times m$ matrix corresponding to the projection on
the coordinates indexed by $K$;
that is, $\Pi_K$ has  $1$s in positions $(1,i_1), \ldots, (k, i_k)$,
where $i_1 < \ldots < i_k$ are the elements of $K$. The matrix $\Pi_K\Sigma=\Pi_KU^T \tilde A V=U_K^T\tilde AV$
has singular values $\sigma_i$, $i\in K$, and so does the matrix
$U_K^T\tilde A$, since right multiplication by the orthogonal matrix $V^T$
does not change the singular values.

This $k\times m$ matrix $U_K^T\tilde A$ 
is going to be the matrix $B$ alluded to in the 
sketch of the proof idea above. We have
\[
|\det BB^T|^{1/2k}=\Bigl(\prod_{i\in K}\sigma_i\Bigr)^{1/k}\ge
\frac{1}{2k}\sum_{i\in K}\sigma_i=\Omega\bigl(\tfrac 1{k\log r}\bigr)\enorm A.
\]

It remains to relate $\det BB^T$ to the determinant of a square submatrix
of $A$, and this is where Lemma~\ref{l:binetcauchy} is applied---actually 
applied twice, once for columns, and once for rows.
 
First we set $C:=U_K^T P_0^{1/2}A$; then $B=CQ_0^{1/2}$. Applying
Lemma~\ref{l:binetcauchy} with $C$ in the role of $A$ and $Q_0$ in the role
of $W$, we obtain a $k$-element index set $J\subseteq [n]$ such that
\[
|\det C_J|^{1/k}\ge \sqrt{k/e}\cdot |\det BB^T|^{1/2k}.
\]
Next, we set $D:= P_0^{1/2}A_J$, and we claim that $\det D^TD\ge (\det
C_J)^2$. Indeed, we have $C_J = U_K^TD$, and, since $U$ is an
orthogonal transformation, $(U^TD)^T(U^TD) = D^T D$. Then, by the
Binet--Cauchy formula,
\begin{align*}
  \det D^T D = \det (U^TD)^T (U^TD) &= \sum_{L}{(\det U_L^TD)^2} \\
  &\geq (\det U^T_KD)^2 = (\det C_J)^2.
\end{align*}

The next (and last) step is analogous. We have $D^T=A_J^TP_0^{1/2}$,
and so we apply Lemma~\ref{l:binetcauchy} with $A_J^T$ in the role of $A$
and $P_0$ in the role of $W$, obtaining a $k$-element subset $I\subseteq[m]$
with $|\det A_{I,J}|^{1/k}\ge \sqrt{k/e}\cdot|\det D^TD|^{1/2k}$ (where $A_{I,J}$
is the submatrix of $A$ with rows indexed by $I$ and columns by $J$).

Following the chain of inequalities backwards, we  have
\begin{eqnarray*}
\detlb A&\ge &
|\det A_{I,J}|^{1/k}\ge \sqrt{k/e}\cdot|\det D^TD|^{1/2k}\ge
 \sqrt{k/e}\cdot |\det C_J|^{1/k}\\
&\ge &(k/e)|\det BB^T|^{1/2k}=
\Omega\bigl(\tfrac 1{\log r}\bigr)\enorm A,
\end{eqnarray*}
and the theorem is proved.
\end{proof}

In Section~\ref{s:ints} we will see that inequality \eqref{e:NTineq1} is
asymptotically tight.  Let us now mention a simple but
perhaps useful observation, which gives a somewhat weaker result.

There are examples of set systems $\FF_1,\FF_2$ on an $n$-point
set $X$ such that $|\FF_1|,|\FF_2|=O(n)$,
$\herdisc\FF_1$ and $\herdisc \FF_2$ are
bounded by a constant (actually by $1$), and $\herdisc(\FF_1\cup\FF_2)=
\Omega(\log n)$  \cite{palvo-wedges,newman2012beck}.
Therefore, no
quantity obeying the triangle inequality (possibly up to a constant),
such as the $\gamma_2$ norm, 
can approximate $\herdisc$ with a factor better than~$\log n$.

\subsection{The hereditary discrepancy is at most \boldmath$\sqrt{\log m}$ times
$\gamma_2$}

\cut{First, for the reader's convenience, we quickly sketch the proof
of inequality \eqref{e:NTineq2} from \cite{NT}, asserting that
$\herdisc A\le O(\sqrt{\log m}\,)\cdot\enorm A$. }

In the proof of inequality~\eqref{e:NTineq2} we use a remarkable
result of Banaszczyk, which we state next.

\begin{theorem}[\cite{banasz98}]\label{t:bana}
  Let $c_1,\ldots,c_n$ be vectors in the Euclidean unit ball
  $B^m\subset\R^m$ and let $K\subseteq\R^m$ be a convex body with Gaussian measure
  \[(2\pi)^{-m/2}\int_Ke^{-\|x\|^2/2}\dd x\ge\frac 12.\] Then there is a vector
  $x=(x_1,\ldots,x_n)\in\{-1,1\}^n$ of signs such that $\sum_{j=1}^n
  x_jc_j\in D_0\cdot K$, where $D_0$ is an absolute constant. 
\end{theorem}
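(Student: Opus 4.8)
Theorem~\ref{t:bana} is a deep result of Banaszczyk~\cite{banasz98}, and the route I would take follows his strategy: choose the signs $x_1,\dots,x_n$ inductively, one at a time, tracking not a single convex target for each partial sum but a whole family of convex bodies whose Gaussian measure stays above a fixed threshold; the entire argument then reduces to a single \emph{one-step geometric inequality} for the Gaussian measure $\gamma_m$, applied $n$ times while the target body is enlarged by only the constant factor $D_0$ in total.

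Two easy preliminaries come first. Since $K$ is a bounded convex body with $\gamma_m(K)\ge\frac12$, the origin must lie in $\operatorname{int}K$: a supporting hyperplane of $K$ at $0$ would confine $K$ to a closed halfspace through the origin, which has Gaussian measure exactly $\frac12$, and a bounded body cannot fill such a halfspace. Hence $0$ stays an interior point of every dilate $D_0K$ and of the auxiliary bodies produced along the way, which is what lets the inductive ``current partial sum'' always have a feasible continuation. Also, thinking of the vectors as processed in the order $c_n,c_{n-1},\dots,c_1$, I would start from the target body $W_n:=D_0K$ and peel off one $c_j$ at a time.

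The heart of the proof is the one-step lemma: there is a universal constant $\ell$ such that for every convex body $L\subseteq\R^m$ with $\gamma_m(L)\ge\frac12$ and every $u\in B^m$ one can build a convex body $L'\subseteq\R^m$ with $\gamma_m(L')\ge\frac12$ from which a step by $+u$ or by $-u$ always lands back inside $\ell L$. Chaining this lemma from $W_n=D_0K$ down to $W_0$, and then walking back up from the origin (which lies in each $W_k$), produces signs $x_1,\dots,x_n$ with $\sum_j x_jc_j\in W_n=D_0K$. To prove the lemma I would lean on the convexity properties of the Gaussian measure: log-concavity of $\gamma_m$ under Minkowski averages (Pr\'ekopa--Leindler), the Ehrhard-type inequality giving concavity of $\Phi^{-1}\circ\gamma_m$ on convex bodies, and Gaussian isoperimetry in the form $\gamma_m(A+tB^m)\ge\Phi\bigl(\Phi^{-1}(\gamma_m(A))+t\bigr)$. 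The idea is that although each translate $L+u$, $L-u$ may have small Gaussian measure, after $L$ has been dilated by the constant $\ell$ a suitable slicing and recombination of the two translates yields a convex body of measure still at least $\frac12$.

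The main obstacle is exactly this one-step inequality with a \emph{dimension-free} constant. Naive versions fail: for a non-symmetric convex body the single erosion $(L-u)\cap(L+u)$ can be empty, and comparing the potential's value at $\pm u$ with its Gaussian average along the line through $u$ is false once the body is far from symmetric about the current point; so one genuinely needs Banaszczyk's sharp quantitative form, which is designed to survive $n$ iterations. Pinning down the constant $\ell$ (hence $D_0$) and arranging the chain $W_n\supseteq\cdots\supseteq W_0$ so that it neither collapses to the empty set nor drifts away from the origin is the delicate part and is essentially the whole content of the theorem; the final passage from the chain to a $\{-1,1\}$-vector with $\sum_j x_jc_j\in D_0K$ is the routine chaining sketched above.
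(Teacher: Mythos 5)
First, note that the paper does not prove Theorem~\ref{t:bana} at all: it is quoted verbatim from Banaszczyk~\cite{banasz98} and used as a black box in the proof of Theorem~\ref{t:ub}. So there is no in-paper argument to compare yours against; the only fair benchmark is Banaszczyk's original proof.

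Measured against that, your proposal correctly identifies the architecture of the argument---a backward induction that replaces the current target body $W_k$ by an auxiliary convex body $W_{k-1}$ of Gaussian measure at least $\frac12$ from which a step by $+c_k$ or $-c_k$ lands in (a dilate of) $W_k$, followed by a forward walk starting at $0\in W_0$---and your observation that every bounded convex body of Gaussian measure at least $\frac12$ contains the origin is correct and is indeed what lets the walk start. But the proposal is an outline, not a proof: all of the mathematical content is concentrated in the ``one-step lemma,'' which you state without proving, and you concede yourself that it ``is essentially the whole content of the theorem.'' Naming Pr\'ekopa--Leindler, Ehrhard's inequality, and Gaussian isoperimetry as candidate tools does not substitute for the actual construction of $L'$ from $L$ and $u$: in Banaszczyk's paper this is an explicit set operation $K\ast u$ together with a genuinely delicate verification that $\gamma_m(K\ast u)\ge\frac12$ whenever $\gamma_m(K)\ge\frac12$ and $\|u\|_2\le1$, arranged so that the dilation constant is paid once rather than accumulating over the $n$ iterations. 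As written, the key inequality is assumed rather than established, so the proposal has a genuine gap---although, to be fair, the paper itself deliberately leaves the same gap by citing the result rather than reproving it.
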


We prove the following theorem:

\begin{theorem}\label{t:ub}
  For any $m\times n$ matrix $A$, 
  \[
  \disc A = O(\sqrt{\log m})\cdot \gamma_2(A).
  \]
\end{theorem}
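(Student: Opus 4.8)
The plan is to prove $\disc A = O(\sqrt{\log m})\cdot\gamma_2(A)$ by combining the geometric picture of $\gamma_2$ with Banaszczyk's theorem. Write $t=\gamma_2(A)$ and take an optimal factorization $A=BC$ where $B:\ell_2\to\ell_\infty^m$ and $C:\ell_1^n\to\ell_2$ satisfy $\|B\|_{2\to\infty}\|C\|_{1\to 2}=t$; by rescaling I may assume $\|B\|_{2\to\infty}\le\sqrt{t}$ and $\|C\|_{1\to 2}\le\sqrt{t}$ (or, more simply, normalize so that $\|C\|_{1\to2}\le 1$ and $\|B\|_{2\to\infty}\le t$). Then the columns $c_1,\dots,c_n$ of $C$ are vectors of Euclidean norm at most $1$, i.e.\ they lie in the unit ball $B^{n'}$ where $n'=\rank A$ is the relevant dimension (this is what lets me apply Banaszczyk in a space of dimension at most $\rank A$, though for the statement as given using $m$ suffices).

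The key step is to pick the right convex body $K$ in Banaszczyk's theorem. I want $K$ to be a slab-like body so that $B\cdot K$ has small $\ell_\infty$ norm. Concretely, let $K=\{y: |\langle b_i^T, y\rangle|\le \kappa\sqrt{\log m}\text{ for all rows }b_i\text{ of }B\}$, i.e.\ the intersection of $m$ symmetric slabs, one for each row of $B$, each row having Euclidean norm $\|b_i\|_2\le\|B\|_{2\to\infty}$. I then need to check that for a suitable absolute constant $\kappa$ this $K$ has Gaussian measure at least $\tfrac12$: a standard union bound over the $m$ slabs, using that for a standard Gaussian $g$ and a fixed unit vector $u$, $\Pr[|\langle g,u\rangle|>s]\le 2e^{-s^2/2}$, gives Gaussian measure $\ge 1-2m e^{-\kappa^2\log m/2}\ge\tfrac12$ once $\kappa$ is a large enough constant. (Here I rescale the slabs by $\|b_i\|_2$; since $\|b_i\|_2\le\|B\|_{2\to\infty}$ the half-width in terms of the standardized direction is $\ge\kappa\sqrt{\log m}/\|B\|_{2\to\infty}$, but it is cleanest to first absorb $B$ and take $K$ in the range space — I will instead apply Banaszczyk to the vectors $c_j$ and the body $K=\{y:\|By\|_\infty\le\kappa\|B\|_{2\to\infty}\sqrt{\log m}\}$ directly, which is exactly the intersection of the $m$ slabs $|\langle b_i,y\rangle|\le\kappa\|B\|_{2\to\infty}\sqrt{\log m}$, each containing the slab of half-width $\kappa\sqrt{\log m}$ in the normalized direction $b_i/\|b_i\|_2$, so the union bound applies verbatim).

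Having verified the Gaussian-measure hypothesis, Banaszczyk yields signs $x\in\{-1,1\}^n$ with $\sum_j x_j c_j = Cx \in D_0 K$, hence $\|B(Cx)\|_\infty \le D_0\kappa\|B\|_{2\to\infty}\sqrt{\log m}$. Since $A=BC$, this says $\|Ax\|_\infty\le D_0\kappa\,\|B\|_{2\to\infty}\|C\|_{1\to 2}\sqrt{\log m}=O(\sqrt{\log m})\cdot\gamma_2(A)$ after undoing the normalization, which gives $\disc A = O(\sqrt{\log m})\cdot\gamma_2(A)$. Finally, inequality \eqref{e:NTineq2} follows because hereditary discrepancy is a maximum over column-submatrices $A_J$, and $\gamma_2(A_J)\le\gamma_2(A)$ by monotonicity \eqref{e:monotone} while $A_J$ still has $m$ rows; applying the displayed bound to each $A_J$ and taking the max gives $\herdisc A\le O(\sqrt{\log m})\cdot\gamma_2(A)$.

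I expect the only real subtlety to be the Gaussian-measure estimate for the slab body $K$ and the bookkeeping of which norm ($\|B\|_{2\to\infty}$ versus $\|C\|_{1\to2}$) carries the factor — the union bound over $m$ one-dimensional Gaussian tail estimates is routine but must be set up so that each slab's half-width, measured in the standardized direction $b_i/\|b_i\|_2$, is at least $\kappa\sqrt{\log m}$; everything else is a direct application of Banaszczyk's theorem and the geometric definition of $\gamma_2$.
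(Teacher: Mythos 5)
Your proof is correct and is essentially the same as the paper's: normalize the optimal factorization $A=BC$ so that $\|C\|_{1\to 2}\le 1$, apply Banaszczyk's theorem to the columns of $C$ with the slab body $K=\{y:\|By\|_\infty\le O(\sqrt{\log m})\gamma_2(A)\}$, and pass to $\herdisc$ via the monotonicity \eqref{e:monotone}. The only (harmless) difference is that you bound the Gaussian measure of $K$ by a union bound over the $m$ slabs, whereas the paper uses \v{S}id\'ak's lemma to take the product of the one-dimensional slab measures; both give the required constant-probability bound with threshold $\Theta(\sqrt{\log m})$.
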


While Theorem~\ref{t:ub} at first appears weaker than
inequality~\eqref{e:NTineq2}, it in fact implies it, due to the
monotonicity of $\gamma_2$. Indeed, by \eqref{e:monotone}, we have
\[
\herdisc A = \max_{J\subseteq[n]}\disc A_J \leq O(\sqrt{\log
  m}\,)\cdot \max_{J\subseteq [n]}\gamma_2(A_J) \leq O(\sqrt{\log m})\gamma_2(A).
\]

\begin{proof}[Proof of Theorem~\ref{t:ub}]
  Let $A = B_0C_0$ be a factorization of $A$ achieving $\gamma_2(A)$,
  such that $\|C_0\|_{1\to 2} = 1$ and $\|B_0\|_{2\to\infty} =
  \gamma_2(A)$. Without loss of generality, we can assume that $B_0$
  is an $m\times m$ matrix and $C_0$ is an $m\times n$ matrix. Let
  $b_1, \ldots, b_m\in\R^m$ be the rows of $B_0$ and $c_1, \ldots, c_n
  \in \R^m$ be the columns of $C_0$. By our choice of $B_0$ and $C_0$,
  $\|b_i\|_2 \leq \gamma_2(A)$ for all $1 \leq i \leq m$, and
  $\|c_j\|_2 \leq 1$ for all $1 \leq j \leq n$. Define the convex body
  $K := \{x: \|B_0x\|_\infty \leq D\gamma_2(A)\}$ for a scalar $D$ to
  be determined later. $K$ is the intersection of the $m$ centrally
  symmetric slabs $\{x: |b_i^T x| \leq D\gamma_2(A)\}$, $i = 1,
  \ldots, m$. By \v{S}idak's lemma (see~\cite{Ball01-survey} for a
  simple proof), the Gaussian measure of $K$ is at least the product
  of the measures of the slabs, i.e.
  \[
  (2\pi)^{-m/2}\int_Ke^{-\|x\|^2/2}\dd x \geq \prod_{i =
    1}^m{\sqrt{2\pi}\int_{-\beta_i}^{\beta_i}e^{-y^2/2}\dd y},
  \]
  where $\beta_i := \frac{D\gamma_2(A)}{\|b_i\|_2} \geq D$ is the
  half-width of the $i$-th slab. By standard
  Gaussian concentration results, we have
  $\sqrt{2\pi}\int_{-\beta_i}^{\beta_i}e^{-y^2/2}\dd y \geq 1 -
  e^{-\beta_i^2/2} \geq 1 - e^{-D^2/2}$, and, therefore,
  \[
  (2\pi)^{-m/2}\int_Ke^{-\|x\|^2/2}\dd x  \geq (1-e^{-D^2/2})^m.
  \]
  Letting $D$ be a suitable constant multiple of $\sqrt{\log m}$, the
  above inequality implies that the Gaussian measure of $K$ is at
  least $1/2$. We can then apply Theorem~\ref{t:bana} and conclude
  that there exists a vector of signs $x = (x_1, \ldots, x_n) \in
  \{-1, 1\}^n$ so that
  \[
  \sum_j{x_j c_j} \in D_0\cdot K \Leftrightarrow \|Ax\|_\infty =
  \Bigl\|\sum_j{x_j Bc_j}\Bigr\|_\infty \leq D_0D\cdot\gamma_2(A).
  \]
  Since $D = O(\sqrt{\log m}\,)$, this completes the proof.
\end{proof}

An argument similar to the proof above was used by Larsen in his work
on oblivious data structures in the group model~\cite{larsen14}.

\cut{Now let $A$ be a given $m\times n$ matrix, let $E$ be an ellipsoid
witnessing $\enorm A$, and for convenience, let us assume
w.l.o.g.\ that $\enorm A=1$, i.e., that $C^m=[-1,1]^m$ is the smallest
cube containing~$E$.
Let $T\:\R^m\to\R^m$ be a linear map
such that $T(E)=B^m$, as is illustrated in the following picture:
\immfig{banapf}
To show that $\disc A\le D$ for some $D$
means finding signs $x_1,\ldots,x_n$
such that $\sum_{j=1}^n x_ja_j\in D\cdot C^m$, where $a_j$ is the
$j$th column of $A$ and $D\cdot C^m$ is the unit cube blown up $D$-times.
This in turn is equivalent to $\sum_{j=1}^n x_jT(a_j)\in D\cdot K$,
where $K=T(C^m)$. Banaszczyk's theorem guarantees the latter to be possible
provided that $\gamma(K')\ge \frac12$, where $K'=(D/D_0)K$. 
Our $K'$ is the intersection of $m$ centrally symmetric slabs, each of width 
$2D/D_0$, and  standard facts about the Gaussian measure (\v{S}id\'ak's lemma)
guarantee that $\gamma(K')\ge (1-e^{-(D/D_0)^2/2})^m$. Letting $D$ be a suitable
constant multiple of $\sqrt{\log m}$ yields $\gamma(K')\ge\frac12$
and concludes the proof.}

Next, we show that $\sqrt{\log m}$ in inequality \eqref{e:NTineq2} cannot be
replaced by any asymptotically smaller factor.

\begin{theorem}\label{t:biggerherdisc}
For all $m$, there are $m\times n$ matrices $A$, with $n=\Theta(\log m)$,
such that 
\[
\disc A\ge \Omega(\sqrt{\log m}\,)\cdot\enorm A.\
\]
\end{theorem}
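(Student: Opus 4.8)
The plan is to exhibit a matrix $A$ whose $\gamma_2$ norm is $O(1)$ but whose discrepancy is $\Omega(\sqrt{\log m})$, with only $n = \Theta(\log m)$ columns. The natural candidate is a Hadamard-type object. Concretely, I would take $n$ to be a power of $2$, let $H$ be the $n\times n$ Hadamard matrix (so $H$ has $\pm1$ entries and $HH^T = nI$), and let $A$ be the $m \times n$ matrix whose rows are all $2^n$ sign vectors in $\{-1,1\}^n$ — equivalently, stack all rows of the form $(\pm 1,\dots,\pm 1)$. Then $m = 2^n$, so $n = \log_2 m = \Theta(\log m)$, as required. (One can also use a large random sample of rows, or the rows of a Hadamard matrix of size $m$ restricted to $n$ columns; the all-sign-vectors choice makes the discrepancy lower bound cleanest.)

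The first step is the upper bound $\gamma_2(A) = O(1)$, or at least $O(\sqrt{n})$ won't do — I need $\gamma_2(A) = O(1)$ so that the ratio is $\Omega(\sqrt{\log m})$. Using the geometric definition, I must find a $0$-centered ellipsoid $E \subseteq \R^m$ containing every column of $A$ with $\|E\|_\infty = O(1)$. Each column of $A$ has the property that its coordinates range over a balanced set of $\pm 1$'s indexed by sign vectors; the columns are pairwise "orthogonal on average." The right ellipsoid is the image of the Euclidean ball of radius $O(1)$ under the map $x \mapsto \frac{1}{\sqrt n}\, A x$ — indeed, by the factorization viewpoint, write $A = B_0 C_0$ with $C_0 = \frac{1}{\sqrt n} I_n$ (so $\|C_0\|_{1\to 2} = \frac{1}{\sqrt n}$... that's too small) — better: take $C_0 = I_n$, $B_0 = A$, and bound $\|A\|_{2\to\infty}$, the largest Euclidean norm of a row of $A$, which is $\sqrt n$. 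That only gives $\gamma_2(A) \le \sqrt n$, which is useless. So the real content is that the columns of $A$, while each of Euclidean norm $\sqrt m$, fit inside a much rounder ellipsoid: since $\frac1m A^T A = I_n$ (the columns are orthonormal after scaling by $\sqrt m$), the ellipsoid $E = \{A u : \|u\|_2 \le 1/\sqrt m\}$ contains all columns of $A$, and $\|E\|_\infty = \max_i \|(\text{row }i \text{ of }A)\|_2 / \sqrt m = \sqrt n/\sqrt m = o(1)$. Hmm, that makes $\gamma_2(A) = o(1)$, which is wrong since $\gamma_2$ of a $\pm1$ matrix is at least $1$. The error is that $E$ must contain the columns, and each column has $\|c_j\|_\infty = 1$, so $\|E\|_\infty \ge 1$ automatically. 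Let me recompute: a column of $A$ is $c_j = A e_j$, and to have $c_j \in E$ I need $e_j \in \{u : \|u\|_2 \le 1/\sqrt m\}$, i.e. $1 \le 1/\sqrt m$, false. The correct ellipsoid uses the dual matrix: take $D = I_m$ rescaled so that all columns lie in $E(D)$; by Lemma~\ref{l:dual-e8} we want $\max_i d_{ii}$ small subject to $c_j^T D^{-1} c_j \le 1$ for all $j$. With $D = \frac{m}{n} \cdot \frac1m A A^T$-type choices one can check $\gamma_2(A) = O(1)$ — in fact $\gamma_2$ of the full sign matrix is well known to be $\Theta(1)$ (it is $1$ for a Hadamard matrix since $\frac{1}{\sqrt n} H$ is orthogonal). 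I would cite or quickly verify $\gamma_2(H) = \sqrt n / \sqrt n$... The cleanest route: use the $n \times n$ Hadamard matrix itself scaled, observe $\gamma_2(H) = \sqrt{n}$ actually (columns have norm $\sqrt n$, smallest enclosing ellipsoid with small $\ell_\infty$ norm is the ball of radius $1$ so $\|E\|_\infty = 1$... no). I will settle this by the factorization $H = H \cdot I$, $\|H\|_{2\to\infty} = \sqrt n$, versus $H = I \cdot H$; but $\gamma_2(H) = \min\sqrt{\|B\|_{2\to\infty}^2}$... For the orthogonal matrix $Q = \frac{1}{\sqrt n}H$ one has $\gamma_2(Q) = 1$ (take $B = C = Q^{1/2}$-ish; concretely $Q = Q\cdot I$ gives $\|Q\|_{2\to\infty}\cdot 1 = 1$ since rows of $Q$ have norm $1$, and $I$ has columns of norm $1$, wait $\|I\|_{1\to 2}=1$, so $\gamma_2(Q)\le 1$, and $\ge 1$ always for a matrix with a $\pm1/\sqrt n$ entry... need entries bounded — yes $\gamma_2(Q)\ge \max|q_{ij}|\cdot$something). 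So with $A = \frac{1}{\sqrt n} H$ we get $\gamma_2(A) = O(1)$ but the entries are tiny, killing discrepancy. The resolution is to take $A$ = Hadamard matrix $H$ with $\pm 1$ entries but only look at $\Theta(\log m)$ of its columns inside an $m\times m$ Hadamard matrix; then $\gamma_2(A) \le \gamma_2(\frac{1}{\sqrt m} H_{m}) \cdot \sqrt{\text{(scaling)}}$... I am going in circles, so:

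\textbf{Revised plan.} Take $A$ to be an $m \times n$ submatrix of an $m\times m$ Hadamard matrix $H_m$, keeping all $m$ rows but only $n = \log_2 m$ columns; rescale so $A$ has $\pm 1$ entries. Since $A$ is a submatrix of $H_m$ and $\gamma_2(H_m) = \gamma_2(\sqrt m \cdot \frac{1}{\sqrt m}H_m) = \sqrt m \cdot 1 = \sqrt m$ — still not $O(1)$. The honest statement is $\gamma_2$ of any $\pm1$ $n\times n$ matrix is at most $\sqrt n$ and for Hadamard it equals $\sqrt n$. So to get the ratio $\sqrt{\log m}$ we actually want $\gamma_2(A)$ and $\disc A$ to differ by $\sqrt n = \sqrt{\log m}$, not $\gamma_2(A) = O(1)$. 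So: the first step is $\gamma_2(A) \le \sqrt n = O(\sqrt{\log m})$ via the trivial factorization $A = A \cdot I$ with $\|A\|_{2\to\infty} = \sqrt n$ and $\|I\|_{1\to2} = 1$ — works for any $\pm1$ matrix with $n$ columns. The second step, the crux, is the discrepancy lower bound $\disc A \ge \Omega(n) = \Omega(\log m)$, giving ratio $\Omega(n)/\sqrt n \cdot$... that's $\sqrt n = \sqrt{\log m}$, done. To prove $\disc A = \Omega(n)$ when $A$ = all $2^n$ sign rows: for any coloring $x \in \{-1,1\}^n$, the row equal to $x$ itself is in $A$, and that row dotted with $x$ is $\sum_j x_j^2 = n$. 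Hence $\|Ax\|_\infty \ge n$ for every $x$, so $\disc A \ge n = \log_2 m$. Combined with $\gamma_2(A) \le \sqrt n$, this yields $\disc A \ge \sqrt n \cdot \gamma_2(A) = \Omega(\sqrt{\log m})\cdot\gamma_2(A)$, with $m = 2^n$, i.e. $n = \Theta(\log m)$, exactly as claimed.

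\textbf{Summary of steps, and the main obstacle.} (1) Fix $n$, set $m = 2^n$, and let $A$ be the $m\times n$ matrix whose rows are all vectors in $\{-1,1\}^n$. (2) Upper bound: $\gamma_2(A) \le \|A\|_{2\to\infty}\cdot\|I_n\|_{1\to2} = \sqrt n$, using the trivial factorization $A = A\cdot I_n$ and the fact that every row of $A$ has Euclidean norm $\sqrt n$. (3) Lower bound on discrepancy: for any $x\in\{-1,1\}^n$, since $x$ itself appears as a row of $A$, the corresponding coordinate of $Ax$ equals $\langle x, x\rangle = n$, so $\disc A = \min_x\|Ax\|_\infty \ge n$. (4) Combine: $\disc A \ge n = \sqrt n \cdot \sqrt n \ge \sqrt n \cdot \gamma_2(A) = \Omega(\sqrt{\log m})\cdot\gamma_2(A)$, and $n = \log_2 m = \Theta(\log m)$. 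There is essentially no obstacle here — the construction is elementary; the only point requiring a little care is making sure $\gamma_2(A)$ is not much smaller than $\sqrt n$ is irrelevant (we only need the upper bound on $\gamma_2$ and the lower bound on $\disc$), and that the inequality $\herdisc A\le O(\sqrt{\log m})\gamma_2(A)$ from Theorem~\ref{t:ub} is genuinely tight on this example, which it is since $\disc A \le \herdisc A$ and the two bounds match up to constants. One should also double-check that a cheaper example with fewer rows does not already work, but since we want $n = \Theta(\log m)$ precisely, the exponential blow-up $m = 2^n$ is exactly right and necessary.
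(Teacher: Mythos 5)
Your final argument (the ``Revised plan'' and ``Summary of steps'') is correct and is essentially the paper's own first proof, modulo a cosmetic change: the paper takes $A$ to be the $0/1$ incidence matrix of all subsets of $[n]$ (discrepancy $n/2$), while you take the $\pm1$ matrix of all sign vectors (discrepancy $n$), and both use the trivial factorization $A=AI_n$ to get $\gamma_2(A)\le\sqrt n$. The long detour through Hadamard matrices before the revised plan is unnecessary, but the argument you ultimately land on is sound; the paper additionally offers a second, more geometric proof via Dvoretzky almost-spherical sections of the cube, which your proposal does not touch.
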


\begin{proof} A very simple example is the incidence matrix $A$ of the
  system of all subsets of $[n]$, with $m=2^n$, whose discrepancy is
  $n/2=\Theta(\log m)$.  Indeed, the characteristic vectors of all
  sets have Euclidean norm at most $\sqrt n$, and hence, using the
  trivial factorization $A = AI$, the $\gamma_2$ norm is at most
  $\sqrt n=O(\sqrt{\log m}\,)$.

Here is another proof, which perhaps provides more insight into
the geometric reason behind the theorem.
Let us consider the unit cube $C^m:=[-1,1]^m$ in $\R^m$.
By the quantitative Dvoretzky theorem, there is a linear subspace 
$F\subset\R^m$ of dimension $k=\Theta(\log m)$ such that the slice
$S:=F\cap C^m$ is $2$-almost spherical; that is, if $B_F$ denotes the
largest Euclidean ball in $F$ centered at $0$ contained in $S$, then
$S\subseteq 2B_F$ (see, e.g., \cite[Lect.~2]{Ball-ln}). 
 Let $r$ be the radius of~$B_F$.

Let us choose a system $a_1,\ldots,a_k$ of orthogonal vectors in $B_F$
of length $r$. These are the columns of the
matrix~$A$.

We have $\enorm A\le 1$, since  $B_F$ is a (degenerate) ellipsoid containing
the $a_i$ and contained in~$C^m$. (We are using the geometric
definition of $\gamma_2$ here.)

Every linear combination $\sum_{i=1}^k x_ia_i$, where $x_i\in\{-1,1\}$, has Euclidean norm $r\sqrt k$, and hence it does not belong to the cube
$D\cdot C^m$ for any $D<\frac 12\sqrt k$. So $\disc A\ge \frac12\sqrt k=
\Omega(\sqrt{\log m}\,)$.
\end{proof}

\section{The $\gamma_2$ norm for intervals}\label{s:ints}

In this section we deal with a particular example: the system
$\II_n$ of all initial segments $\{1,2,\ldots,i\}$, $i=1,2,\ldots,n$,
of $\{1,2,\ldots,n\}$. Its incidence matrix is $T_n$, 
the $n\times n$ matrix with $0$s above the main
diagonal and $1$s everywhere else. 

It is well known, and easy to see,
that $\herdisc T_n=1$. We will prove that $\enorm{T_n}$ is of order 
$\log n$. This shows that the $\gamma_2$ norm can be
$\log n$ times larger than the hereditary discrepancy, and thus
the inequality \eqref{e:NTineq1} is asymptotically tight.
Moreover, this example is one of the key ingredients in the proof of the
lower bound on the $d$-dimensional Tusn\'ady
problem. 

\begin{prop}\label{p:ints}
We have $\enorm{T_n}=\Theta(\log n)$.
\end{prop}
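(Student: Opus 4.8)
The plan is to establish the two bounds $\enorm{T_n} = O(\log n)$ and $\enorm{T_n} = \Omega(\log n)$ separately, using the geometric definition of $\gamma_2$ for the upper bound and the dual trace-norm characterization (Theorem~\ref{t:e8dual}) for the lower bound. For the \textbf{upper bound}, I would exhibit an explicit $0$-centered ellipsoid $E \subseteq \R^n$ containing all columns of $T_n$ with $\|E\|_\infty = O(\log n)$; equivalently, using the factorization viewpoint, I would write $T_n = BC$ with $\|B\|_{2\to\infty}\|C\|_{1\to 2} = O(\log n)$. The natural construction is a dyadic one: the $j$-th column of $T_n$ is the indicator of $\{j, j+1, \ldots, n\}$ (or of $\{1,\ldots,i\}$ depending on orientation), and one can decompose the lower-triangular structure into $O(\log n)$ levels, where level $\ell$ partitions $[n]$ into dyadic blocks of length $2^\ell$. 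Concretely, write $T_n = \sum_{\ell=0}^{\lceil\log_2 n\rceil} T^{(\ell)}$ where each $T^{(\ell)}$ is a $\{0,1\}$-matrix that is block-diagonal with blocks that are ``staircases of height one'' (each row has at most one $1$ within its dyadic block, once the coarser levels are subtracted off). Each such $T^{(\ell)}$ has $\enorm{T^{(\ell)}} = O(1)$ — by Lemma~\ref{l:disjsupp} it suffices to bound one block, and a single staircase block is handled by a trivial ellipsoid (its columns have disjoint supports up to the block structure, or one invokes Lemma~\ref{l:union}) — and then the triangle inequality (Proposition~\ref{p:mnorm}) gives $\enorm{T_n} \le \sum_\ell \enorm{T^{(\ell)}} = O(\log n)$.

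For the \textbf{lower bound} $\enorm{T_n} = \Omega(\log n)$, I would use Theorem~\ref{t:e8dual} with carefully chosen diagonal weight matrices $P, Q$ concentrated on a geometric (dyadic) scale, so that $\|P^{1/2} T_n Q^{1/2}\|_* = \Omega(\log n)$. A clean way: pick indices $i_k = 2^k$ for $k = 0, 1, \ldots, \lceil \log_2 n\rceil$, and let $P = Q$ be the uniform distribution on these $\Theta(\log n)$ coordinates (so each diagonal entry is $\approx 1/\log n$). The submatrix of $T_n$ on these rows and columns is again lower triangular with $1$s on and below the diagonal — a scaled copy of $T_{\Theta(\log n)}$ — and $P^{1/2} T_n Q^{1/2}$ restricted there is $\frac{1}{\log n}$ times that triangular matrix. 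Since the trace norm (sum of singular values) of the $p\times p$ all-ones-lower-triangular matrix $T_p$ is known to be $\Theta(p)$ — its singular values are well understood, with $\|T_p\|_* \ge \Tr T_p \cdot c$ for an absolute $c$, or more simply $\|T_p\|_* \ge |\langle T_p, M\rangle|$ for a suitable unit-operator-norm test matrix — we get $\|P^{1/2}T_nQ^{1/2}\|_* \ge \frac{1}{\log n}\cdot \Theta(\log n) = \Omega(1)$; this only gives a constant, so I need $P,Q$ to \emph{not} be uniform on a logarithmic set but rather to spread mass across all $\Theta(\log n)$ dyadic scales in a balanced way. The right choice is $p_{ii} = q_{ii} \propto 1/i$ (harmonic weights, normalized so $\Tr P = \Tr Q = 1$, which forces the normalization constant to be $\Theta(1/\log n)$). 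Then $P^{1/2} T_n Q^{1/2}$ has entry $(i,j)$ equal to $\frac{1}{\log n}\cdot\frac{1}{\sqrt{ij}}$ for $i \ge j$, and one recognizes this (up to the $1/\log n$ factor) as a matrix closely related to a Hilbert-type / Cesàro-type operator whose trace norm is $\Theta(\log^2 n)$; dividing by $\log n$ yields the desired $\Omega(\log n)$.

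The \textbf{main obstacle} is the lower bound's final estimate: showing that the weighted triangular matrix has trace norm of the right order. I expect the cleanest route is \emph{not} to compute singular values exactly but to lower-bound the nuclear norm by a single well-chosen linear functional, i.e.\ use $\|M\|_* = \max_{\|N\|_{\mathrm{op}}\le 1}\langle M, N\rangle \ge \langle M, N_0\rangle$ for an explicit $N_0$ of operator norm $1$ — for instance $N_0$ built from the discrete Hilbert transform or from $\pm1$ patterns aligned with the dyadic scales — which turns the problem into an elementary (if slightly fiddly) double sum of the form $\sum_{i\ge j} \frac{(N_0)_{ij}}{\sqrt{ij}}$. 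An alternative that sidesteps singular-value computations entirely: combine Theorem~\ref{t:detlb-E8} (which gives $\enorm{T_n} \ge \detlb T_n$) with a direct determinant computation — but $\det$ of every square submatrix of $T_n$ is in $\{-1,0,1\}$, so $\detlb T_n = 1$ and this gives nothing; hence the dual/trace-norm route is genuinely necessary, and the summation estimate is the crux. I would also double-check the matching of orientation of $T_n$ (the excerpt uses $1$s on and below the diagonal) so that the dyadic decomposition in the upper bound and the harmonic weights in the lower bound are set up consistently.
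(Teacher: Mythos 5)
Your upper bound is essentially the paper's: the dyadic decomposition $T_n = \sum_\ell T^{(\ell)}$ into $O(\log n)$ block-diagonal pieces with $\enorm{T^{(\ell)}} = 1$ each, combined with the triangle inequality, is exactly the paper's explicit argument in Section~\ref{s:TnUB}. That part is fine.

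The lower bound has a genuine gap, and it stems from a factual error. You assert that $\|T_p\|_* = \Theta(p)$, citing the bound $\|T_p\|_*\ge c\,\Tr T_p = cp$; but this is only a lower bound, and the true order is $\|T_p\|_* = \Theta(p\log p)$. (The singular values of $T_n$ are $\sigma_j = \bigl(2\sin\frac{(2j-1)\pi}{4n+2}\bigr)^{-1}$; with $\sin x\le x$ this gives $\sigma_j\gtrsim n/j$, so the sum is $\Theta(n\log n)$.) This misestimate is precisely what led you to discard the uniform choice $P=Q=\frac1n I_n$ as giving only $\Omega(1)$, and to reach for harmonic weights. In fact the uniform choice already gives $\enorm{T_n}\ge\frac1n\|T_n\|_*=\Omega(\log n)$, and this is the paper's proof (with the singular values computed exactly via the tridiagonal matrix $(T_nT_n^T)^{-1}$; a Fourier/circulant alternative is also given). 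Your harmonic-weight detour, besides being unnecessary, leaves the crux unproven: you claim that the lower-triangular matrix $M$ with $M_{ij}=1/\sqrt{ij}$ for $i\ge j$ has $\|M\|_*=\Theta(\log^2 n)$, but you don't establish it, and the natural route (pinching to dyadic diagonal blocks, each comparable to $2^{-k}T_{2^k}$ and so of trace norm $\Theta(k)$) presupposes $\|T_p\|_*=\Theta(p\log p)$, which is circular. Also note that $\|M\|_{\mathrm{op}}$ is not $O(1)$ — taking $x_j\propto 1/(j\sqrt{\log n})$ shows $\|M\|_{\mathrm{op}}=\Omega(\log n)$ — so the quick ``Frobenius squared over operator norm'' bound only gives $\Omega(\log n)$ for $\|M\|_*$, which is a factor $\log n$ short of what your normalization needs. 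The fix is simply to compute $\|T_n\|_*$ and use uniform $P,Q$, as in the paper.
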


The upper bound is easy but we discuss it a little in
Section~\ref{s:TnUB}. The lower bound can be proved by combining
results from~\cite{ForsterSS01}
and~\cite{LinialMSS07-signmatrices}. Forster et al.~consider the $n\times n$
sign matrix $\tilde{T}_n$ with entries equal to $1$ above the main
diagonal and $-1$ everywhere else. They show that \cut{(in the terminology
of~\cite{LinialMSS07-signmatrices})} the margin complexity of
$\tilde{T}_n$ is $\Omega(\log n)$; since Linial et al.~proved
in~\cite{LinialMSS07-signmatrices} that the margin complexity of any
matrix is a lower bound on its $\gamma_2$ norm, it follows that
$\gamma_2(\tilde{T}_n) = \Omega(\log n)$. Using the equality $T_n = \frac12 J_n -
\frac12 T_n$, where $J_n$ is the $n$ by $n$ all-ones
matrix, and the triangle inequality for $\gamma_2$, we get $\gamma_2(T_n)
\geq \frac12 \gamma_2(\tilde{T}_n) - \frac12 = \Omega(\log n)$ as
well. Below we give a  more direct proof of the lower bound
using the dual characterization of $\gamma_2$ from
Theorem~\ref{t:e8dual}. 

\subsection{Lower bound on $\enorm{T_n}$ }\label{s:singvals}

\begin{proof}[Proof of the lower bound in Proposition~\ref{p:ints}]
The nuclear norm $\|T_n\|_*$  can be computed exactly
(we are indebted to Alan Edelman and Gil Strang for this fact);
namely, the singular values of $T_n$ are
\[
\frac1{2\sin \frac{(2j-1)\pi}{4n+2}},\ \ \ j=1,2,\ldots,n.
\]
Using the inequality $\sin x\le x$ for $x\ge 0$, we get
\[\enorm{T_n}\ge \frac 1n\|T_n\|_*\ge\frac {2n+1}{\pi n}\sum_{j=1}^n\frac 1{2j-1}=\Omega(\log n),\] as needed.

The singular values of $T_n$ can be obtained from the eigenvalues of
the matrix $S_n:=(T_nT_n^T)^{-1}$ which, as is not difficult to check,
has the following simple tridiagonal form:
\[
\begin{pmatrix}
2& -1& 0&  0&0& \ldots& 0&0&0\\
-1&2&-1&0&0&\ldots&0&0&0\\
0&-1&2&-1&0&\ldots&0&0&0\\
\vdots&\vdots&\vdots&\vdots&\vdots&\vdots&\vdots&\vdots&\vdots\\
0&0&0&0&0&\ldots&-1&2&-1\\
0&0&0&0&0&\ldots&0&-1&1
\end{pmatrix}
\]
(the $1$ in the lower right corner is exceptional; the rest
of the main diagonal are $2$s). By general properties of eigenvalues
and singular values, if $\lambda_1,\ldots,\lambda_n$ are the
eigenvalues of $S_n$, then the singular values
of $T_n$ are $\lambda_1^{-1/2},\ldots, \lambda_n^{-1/2}$.
The eigenvalues of $S_n$ are computed, as a part of more general theory, 
in Strang and MacNamara \cite[Sec.~9]{StraMacNama}; the calculation
is not hard to verify since they also give the eigenvectors
explicitly.

One can also calculate the characteristic polynomial $p_n(x)$ of $S_n$:
it satisfies the recurrence $p_{n+1} = (2-x)p_n - p_{n-1}$
with initial conditions $p_1 = 1-x$ and $p_0 = 1$,
from which one can check that $p_n(x) =
U_n\bigl(\frac{2-x}{2}\bigr) - U_{n-1}\bigl(\frac{2-x}{2}\bigr)$,
where $U_n$ is the degree-$n$ Chebyshev polynomial of the second
kind. The claimed roots of $p_n$ can then be verified using
the trigonometric representation of $U_n$.
\end{proof}

\heading{Lower bound by Fourier analysis.}
The lower bound in Proposition~\ref{p:ints} can also be proved by
relating $T_n$ to a circulant matrix, whose singular values can be
estimated using Fourier analysis. Observe that if we put four copies
of $T_n$ together in the following way
\[
\begin{pmatrix}
    T_n & T_n^T\\
    T_n^T & T_n
  \end{pmatrix},
\]
we obtain a circulant matrix, which we denote by $\conv{n+1,2n}$;
for example, for $n=3$, we have
\[
C_{4,6}=\begin{pmatrix}
    1&0&0&1&1&1\\
    1&1&0&0&1&1\\
    1&1&1&0&0&1\\
    1&1&1&1&0&0\\
    0&1&1&1&1&0\\
    0&0&1&1&1&1
  \end{pmatrix}.
\]
We have $\|\conv{n+1,2n}\|_* \leq 4\|T_n\|_*$ by the triangle inequality
for the nuclear norm (and since $\|T_n^T\|_*=
\|T_n\|_*$ and adding zero rows or columns does not change $\|.\|_*$).
Thus, it suffices to prove $\|\conv{n+1,2n}\|_*=\Omega(n\log n)$.

Let $c$ be the first column of $\conv{n+1,2n}$, i.e.~a vector of $n+1$
ones followed by $n-1$ zeros, and let us use the shorthand $C :=
\conv{n+1,2n}$.  Let further $\omega = e^{-i 2\pi/n}$, where $i = \sqrt{-1}$
is the imaginary unit.  It is well known that the eigenvalues of a
circulant matrix with first column $c$ are the Fourier coefficients
$\hat{c}_0, \ldots, \hat{c}_{n-1}$ of $c$:
\[
\hat{c}_j = \sum_{k = 0}^{s-1}{\omega^{jk}} =
\frac{\omega^{js}- 1}{\omega^{j}-1}.  
\]
Since $C$ is a normal matrix (because $C^TC = CC^T$), its singular
values are equal to the absolute values of its eigenvalues. Therefore,
$\|C\|_* = \sum_{j = 0}^{n-1}{|\hat{c}_j|}$, so we need to bound this
sum from below by $\Omega(n\log n)$. The sum can be estimated
analogously to the well-known estimate of the $L_1$ norm of the
Dirichlet kernel, giving the desired bound.

\subsection{An asymptotic upper bound and optimal ellipsoids}\label{s:TnUB}

There are several ways of showing $\enorm{T_n}=O(\log n)$.  One of
them is using $\herdisc T_n=1$ and the inequality \eqref{e:NTineq1}
relating $\gamma_2$ to $\herdisc$.  Here is another, explicit argument
using the triangle inequality. As the next picture indicates,
\immfig{trisum} the lower triangular matrix $T_n$ can be expressed as
$T_n=A_1+\cdots+A_t$, $t=O(\log n)$. (The shaded regions contain $1$s
and the white ones $0$s; the picture is for $n=8$.)  This
decomposition corresponds to the decomposition of intervals into
canonical (binary) ones, which is a standard trick in discrepancy
theory.

We have $\enorm{A_i}=1$ for each $i$: an all-ones matrix
has $\gamma_2$ norm $1$ (since it can be factored as the outer
product of the all-ones vector with itself),
and each $A_i$ can be obtained
from all-ones matrices by the block-diagonal construction 
as in Lemma~\ref{l:disjsupp}
and
by adding zero rows and columns. Hence $\enorm {T_n}\le
\sum_{i=1}^t\enorm{A_i}=O(\log n)$.

The upper bound obtained from this argument is actually
$\lfloor\log_2n\rfloor+1$. Using the semidefinite programming
formulation in~\cite{LinialMSS07-signmatrices} and the SDP solvers
SDPT3 and SeDuMi (for verification), with an interface through Matlab
and the CVX system, we have calculated the values of $\enorm{T_n}$ and
the corresponding optimal primal and dual solutions numerically, for
$n$ up to $2^7=128$.

\labfigw{optEllv2}{9.5cm}{Bounds on $\enorm{T_n}$: $\lceil\log_2
  n\rceil + 1$ (top curve), Fredman's factorization (second curve),
  the actual value computed by an SDP solver (third curve), and the
  lower bound $\frac 1n\|T_n\|_*$. The $x$-axis shows $\log_2 n$.}


The resulting values of $\enorm{T_n}$ are shown in
Fig.~\ref{f:optEllv2}, together with the $\lceil\log_2n\rceil+1$ upper
bound and the lower bound of $\frac 1n\|T_n\|_*$ as in
Section~\ref{s:singvals}.  One can see that while $\frac 1n\|T_n\|_*$
is quite a good approximation, it is not tight, and also that the
upper bound $\lceil\log_2n\rceil+1$ overestimates the actual value
almost four times. We have also plotted the value of
$\|B\|_{2\to\infty}\|C\|_{1\to 2}$ for a factorization $T_n = BC$ due
to Fredman~\cite{Fredman82}; asymptotically, the value achieved by
his method is $\log_\lambda n + O(\log \log n)$ for $\lambda = 3 +
2\sqrt{2}$.  Fredman's factorization uses matrices with entries in
$\{-1, 0, 1\}$ and it is asymptotically optimal over such
factorizations with respect to $\max\{\|B\|_{2\to\infty}, \|C\|_{1\to
  2}\}$.

It would be interesting to find the exact value of $\enorm{T_n}$
theoretically and to understand what the optimal ellipsoids look
like. Fig.~\ref{f:surf50mth} shows a 3-dimensional plot of the entries
of the dual matrix $D$ of an optimal ellipsoid for $T_{50}$; the two
horizontal axes correspond to the rows and columns of $D$, and the
vertical axis shows the magnitude of the entries. Similarly, in
Fig.~\ref{f:plot-dual50} we have
plotted the diagonal entries of $P$ in an optimal solution $(P,Q)$ to
the dual program in Theorem~\ref{t:e8dual};
the horizontal axis corresponds to an index $i$ and the vertical axis
to the value $P_{ii}$. The diagonal entries of the optimal $Q$ appear
to be identical to those of $P$ after a rearrangement: $Q_{ii} =
P_{n-i + 1, n-i + 1}$. For both plots the values are defined for
integer indexes only, and we have used interpolation
to produce smooth graphs. It seems that, as $n\to\infty$, the
matrices of the optimal ellipsoids should converge (in a suitable
sense) to some nice function, and so should the optimal dual
solutions, but we do not yet have a guess what these functions might
be---they may very well be known in some area of mathematics.

\ifbigpic
\labfigw{surf50mth}{10cm}{The dual matrix of an optimal ellipsoid
for $T_{50}$.}
\labfigw{plot-dual50}{10cm}{Diagonal entries of an optimal dual solution
  for $T_{50}$.}
\fi

\section{General theorems about discrepancy}\label{s:disc-thms}

\heading{Union of set systems. } 
Using the inequality in Lemma~\ref{l:union} and inequalities 
\eqref{e:NTineq1},\eqref{e:NTineq2}, we obtain the following
result, which is a somewhat sharper version of a theorem proved
in \cite{almosttight} using the determinant lower bound:

\begin{theorem}[Union of set systems] \label{t:un}
Let $\FF_1,\ldots,\FF_t$ be set systems on
an $n$-point ground set $V$, and let
$\FF=\FF_1\cup\cdots\cup\FF_t$.
Then
\[
\herdisc\FF \le O\Bigl(\sqrt{\log|\FF|}\Bigr)
\biggl(\sum_{i=1}^t(\log|\FF_i|)^2(\herdisc \FF_i)^2\biggr)^{1/2}.
\]
\end{theorem}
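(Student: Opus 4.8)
The plan is to transfer the $\gamma_2$ bound of Lemma~\ref{l:union} through the two inequalities \eqref{e:NTineq1} and \eqref{e:NTineq2} relating $\gamma_2$ and hereditary discrepancy. First I would set up notation: let $A_i$ be the incidence matrix of $\FF_i$ and let $A$ be the incidence matrix of $\FF=\FF_1\cup\cdots\cup\FF_t$; note that each row of $A$ is a row of some $A_i$. (The Lemma is phrased for matrices sharing columns, so I would apply it to the transposes, using Lemma~\ref{l:transp} to the effect that $\gamma_2$ is transposition-invariant; equivalently one may just restate Lemma~\ref{l:union} for matrices sharing rows.) This gives $\gamma_2(A)^2 \le \sum_{i=1}^t \gamma_2(A_i)^2$.

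Next I would invoke \eqref{e:NTineq2}, i.e. $\herdisc A \le C\sqrt{\log m}\cdot\gamma_2(A)$ where $m=|\FF|$ is the number of rows of $A$. Combining with the previous display,
\[
\herdisc\FF \;\le\; C\sqrt{\log|\FF|}\;\Bigl(\sum_{i=1}^t \gamma_2(A_i)^2\Bigr)^{1/2}.
\]
Finally I would bound each $\gamma_2(A_i)$ from above using \eqref{e:NTineq1} in the form $\gamma_2(A_i) \le C'\log|\FF_i|\cdot\herdisc\FF_i$ (note $\gamma_2(A_i)\le C'\log(\operatorname{rank} A_i)\cdot\detlb A_i$ from Theorem~\ref{t:detlb-E8}, and $\operatorname{rank} A_i\le|\FF_i|$, and $\detlb A_i\le 2\herdisc\FF_i$). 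Substituting term by term into the sum yields
\[
\herdisc\FF \;\le\; C C'\sqrt{\log|\FF|}\;\Bigl(\sum_{i=1}^t (\log|\FF_i|)^2(\herdisc\FF_i)^2\Bigr)^{1/2},
\]
which is the claimed inequality after absorbing $CC'$ into the $O(\cdot)$.

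There is essentially no real obstacle here — the content is entirely in Lemma~\ref{l:union} and in the two transference inequalities, all of which are available. The only points requiring a little care are (i) making sure the ``shared rows'' vs.\ ``shared columns'' version of Lemma~\ref{l:union} is applied correctly, which is handled by transposition-invariance of $\gamma_2$ and the fact that $\herdisc$ is also transposition-invariant in the relevant sense (or by restating the lemma); and (ii) checking that it is legitimate to pull the per-system factor $\log|\FF_i|$ inside the square root, which is immediate since $\gamma_2(A_i)^2 \le (C'\log|\FF_i|)^2(\herdisc\FF_i)^2$ holds for each $i$ separately before summing. A brief remark on tightness (that the two logarithmic factors are essentially necessary, via examples such as $\II_n$ combined with wedges systems) could be appended, but the proof of the stated bound is just the three-line chain above.
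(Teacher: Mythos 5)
Your proof is correct and is precisely the argument the paper intends, which it only sketches as ``using Lemma~\ref{l:union} and inequalities \eqref{e:NTineq1}, \eqref{e:NTineq2}'': iterate the disjoint-column lemma on the transposed incidence matrices, then transfer through \eqref{e:NTineq2} on the outside and \eqref{e:NTineq1} termwise inside the sum. One tiny remark: $\herdisc$ is \emph{not} transposition-invariant in general, but that does not matter here since you only need $\gamma_2(A)=\gamma_2(A^T)$ to apply Lemma~\ref{l:union}, and the $\herdisc$ quantities you invoke are always those of the original (untransposed) incidence matrices.
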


We note that if the set systems $\FF_1$ and $\FF_2$ have disjoint
ground sets, then $\herdisc(\FF_1\cup\FF_2)=\max(\herdisc\FF_1,
\herdisc\FF_2)$, which can be regarded as a counterpart of
Lemma~\ref{l:disjsupp}.

\heading{Building sets from disjoint pieces. }
In a similar vein, the triangle inequality for $\gamma_2$
together with \eqref{e:NTineq1},\eqref{e:NTineq2} immediately yield the
following consequence:

\begin{theorem}
\label{t:disjpieces} Let $\FF_1,\ldots,\FF_t$ be set systems on
an $n$-point ground set $V$, and let
$\FF$ be a set system such that for each $F\in\FF$ there
are pairwise disjoint sets $F_1\in\FF_1$,\ldots, $F_t\in\FF_t$
so that $F=F_1\cup\cdots\cup F_t$. Then
\[
\herdisc\FF \le 
O\Bigl(\sqrt{\log|\FF|}\Bigr)\sum_{i=1}^t(\log|\FF_i|)\herdisc \FF_i.
\]
\end{theorem}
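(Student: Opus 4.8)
The plan is to pass to incidence matrices and let the already-proved facts about $\gamma_2$ do all the work. Write $A$ for the incidence matrix of $\FF$, with one row $\mathbf 1_F$ for each $F\in\FF$, and write $M_i$ for the incidence matrix of $\FF_i$, $i=1,\dots,t$. For every $F\in\FF$ fix the promised decomposition $F=F_1\cup\dots\cup F_t$ into \emph{pairwise disjoint} pieces $F_i\in\FF_i$. The point where disjointness enters is exactly this: because the $F_i$ are pairwise disjoint, the characteristic vectors add, $\mathbf 1_F=\sum_{i=1}^t\mathbf 1_{F_i}$, so that $A=A^{(1)}+\dots+A^{(t)}$, where $A^{(i)}$ is the matrix indexed by $\FF$ whose row corresponding to $F$ is $\mathbf 1_{F_i}$.

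The first step is to check $\gamma_2(A^{(i)})\le\gamma_2(M_i)=\gamma_2(\FF_i)$. Every row of $A^{(i)}$ is a row of $M_i$, possibly with repetitions; taking a factorization $M_i=BC$ achieving $\gamma_2(M_i)$ and letting $B'$ be the matrix whose rows are the corresponding rows of $B$ (repeated as needed) gives $A^{(i)}=B'C$ with $\|B'\|_{2\to\infty}\le\|B\|_{2\to\infty}$, hence $\gamma_2(A^{(i)})\le\|B'\|_{2\to\infty}\|C\|_{1\to2}\le\gamma_2(M_i)$. This is the same monotonicity phenomenon recorded in \eqref{e:monotone}. Next, applying the triangle inequality for $\gamma_2$ (Proposition~\ref{p:mnorm}) $t-1$ times yields $\gamma_2(A)\le\sum_{i=1}^t\gamma_2(A^{(i)})\le\sum_{i=1}^t\gamma_2(\FF_i)$.

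It remains to translate between $\gamma_2$ and $\herdisc$. Since $M_i$ has $|\FF_i|$ rows, \eqref{e:NTineq1} gives $\gamma_2(\FF_i)\le C\log|\FF_i|\cdot\herdisc\FF_i$, and since $A$ has $|\FF|$ rows, \eqref{e:NTineq2} gives $\herdisc\FF=\herdisc A\le C\sqrt{\log|\FF|}\cdot\gamma_2(A)$. Chaining these three inequalities,
\[
\herdisc\FF\le C\sqrt{\log|\FF|}\sum_{i=1}^t C\log|\FF_i|\,\herdisc\FF_i
=O\Bigl(\sqrt{\log|\FF|}\,\Bigr)\sum_{i=1}^t(\log|\FF_i|)\,\herdisc\FF_i,
\]
which is the claimed bound. (If $\FF$ or some $\FF_i$ contains repeated sets, the corresponding incidence matrix simply has fewer rows, so the logarithmic factors only shrink and the bound still holds.) There is no serious obstacle here — the whole argument is bookkeeping on top of Proposition~\ref{p:mnorm} and \eqref{e:NTineq1}, \eqref{e:NTineq2}; the only two points needing a moment's care are the monotonicity estimate $\gamma_2(A^{(i)})\le\gamma_2(\FF_i)$ for the row-repeated matrix $A^{(i)}$, and the use of pairwise disjointness to turn ``$F$ is a union of the $F_i$'' into the \emph{additive} identity $\mathbf 1_F=\sum_i\mathbf 1_{F_i}$, which is what makes the triangle inequality for $\gamma_2$ (rather than some union operation on matrices) the right tool.
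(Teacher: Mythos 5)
Your proof is correct and follows exactly the route the paper intends (the paper states the theorem "immediately" follows from the $\gamma_2$ triangle inequality together with \eqref{e:NTineq1} and \eqref{e:NTineq2}); you have just filled in the bookkeeping. The two points you flag as needing care — that disjointness turns the union into the additive identity $\mathbf 1_F=\sum_i\mathbf 1_{F_i}$, and that $\gamma_2$ is monotone under selecting rows with repetition via the factorization argument — are exactly the right observations and are handled correctly.
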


In Section~\ref{s:tusnady} below, we will obtain an example showing
that if each of the systems $\FF_i$ in the theorem has hereditary discrepancy
at most $D$, the system $\FF$ may have discrepancy about $tD$, up
to a logarithmic factor, and thus in this sense, the theorem
is not far from worst-case optimal.

%

\heading{Product set systems. } Let $\FF$ be a set system on a ground
set $V$, and $\GG$ a set system on a ground set $W$. Following
Doerr, Srivastav, and Wehr \cite{Doerr-al-productdisc} (and probably
many other sources), we define the product
$\FF\times\GG$ as the set system $\{F\times G:F\in\FF, G\in\GG\}$
on $V\times W$.

Since the incidence matrix of $\FF\times\GG$ is the Kronecker product
of the incidence matrices of $\FF$ and $\GG$, from Theorem~\ref{t:krone}
and the usual inequalities \eqref{e:NTineq1},\eqref{e:NTineq2},
we get that the hereditary discrepancy is approximately multiplicative:

\begin{theorem}\label{t:mult}
Let $\FF_1,\ldots,\FF_t$ be set systems, let $m_i=|\FF_i|>1$ for all $i$,
let $\FF=\FF_1\times\cdots\times\FF_t$,
and let $D:=\prod_{i=1}^t\herdisc\FF_i$. Then
\[
\frac{D}
{C^t \log |\FF| \prod_{i=1}^t\sqrt{\log m_i}}
\le \herdisc\FF \le 
D\cdot C^t\sqrt{\log |\FF|}\prod_{i=1}^t\log m_i
\]
with a suitable absolute constant~$C$.
\end{theorem}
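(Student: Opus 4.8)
The plan is to reduce the statement to two facts already established: the multiplicativity of $\gamma_2$ under the Kronecker product (Theorem~\ref{t:krone}), and the two-sided comparison between $\gamma_2$ and $\herdisc$ given by \eqref{e:NTineq1} and \eqref{e:NTineq2}. First I would let $A_i$ be the incidence matrix of $\FF_i$, an $m_i\times n_i$ matrix, and recall (as noted just before the statement) that the incidence matrix $A$ of $\FF=\FF_1\times\cdots\times\FF_t$ is the Kronecker product $A=A_1\otimes\cdots\otimes A_t$; in particular $A$ has exactly $\prod_{i=1}^t m_i=|\FF|$ rows. Theorem~\ref{t:krone} (applied $t-1$ times) then gives $\gamma_2(A)=\prod_{i=1}^t\gamma_2(A_i)$.

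Next I would pass each factor through the inequalities relating $\gamma_2$ and hereditary discrepancy, applied to the $m_i$-row matrix $A_i$: inequality \eqref{e:NTineq1} yields $\gamma_2(A_i)\le C\log m_i\cdot\herdisc\FF_i$, while \eqref{e:NTineq2} yields $\gamma_2(A_i)\ge \herdisc\FF_i/(C\sqrt{\log m_i})$. Multiplying these estimates over $i=1,\ldots,t$ and invoking multiplicativity of $\gamma_2$, one obtains, with $D=\prod_{i=1}^t\herdisc\FF_i$,
\[
\frac{D}{C^t\prod_{i=1}^t\sqrt{\log m_i}}\ \le\ \gamma_2(A)\ \le\ C^t\prod_{i=1}^t\log m_i\cdot D .
\]
Finally I would apply \eqref{e:NTineq1} and \eqref{e:NTineq2} once more, this time to the full $|\FF|$-row matrix $A$ itself: the lower bound $\herdisc\FF\ge\gamma_2(A)/(C\log|\FF|)$ combined with the left-hand estimate above gives $\herdisc\FF\ge D/(C^{t+1}\log|\FF|\prod_i\sqrt{\log m_i})$, and the upper bound $\herdisc\FF\le C\sqrt{\log|\FF|}\cdot\gamma_2(A)$ combined with the right-hand estimate gives $\herdisc\FF\le D\cdot C^{t+1}\sqrt{\log|\FF|}\prod_i\log m_i$. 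Since $t\ge 1$, one can absorb the stray factor by writing $C^{t+1}\le (C^2)^t$, which puts both bounds in the form claimed with a single absolute constant.

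There is no substantial obstacle here—the argument is essentially bookkeeping once the Kronecker-product identity for incidence matrices and Theorem~\ref{t:krone} are in hand. The only points needing care are (a) keeping straight which row count enters \eqref{e:NTineq1}–\eqref{e:NTineq2}, namely $m_i$ for the individual factors and $|\FF|=\prod_i m_i$ for the product, and (b) tracking the powers of the absolute constant so that the final inequalities display exactly a $C^t$ dependence; using the hypothesis $m_i>1$ (hence $\log m_i>0$) and $t\ge 1$ makes this routine.
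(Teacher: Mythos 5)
Your proof is correct and follows the same route the paper indicates: use the Kronecker-product identity for the incidence matrix of $\FF$, invoke Theorem~\ref{t:krone} to get $\gamma_2(A)=\prod_i\gamma_2(A_i)$, and then pass each factor (and finally the product matrix $A$ itself) through inequalities \eqref{e:NTineq1} and \eqref{e:NTineq2}. Your bookkeeping of which row count enters each inequality, and the absorption of the extra factor of $C$ into $C^t$ for $t\ge 1$, are both exactly right.
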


In the proof of the bounds for Tusn\'ady's problem in Section~\ref{s:tusnady}
we will see that the upper bound is not far from being tight.
Here we give a simple example showing that the lower bound is
near-tight as well.

Let $m=2^k$ with $k$ even and let $\PP=2^{[k]}$ be the system of all subsets
of the $k$-element set $[k]$. Then $|\PP|=m$ and $\herdisc\PP=k/2$.
The lower bound in the theorem for the hereditary discrepancy of the 
$t$-fold product $\PP^t$, assuming $t$ constant, is of order
$D/\log^{t/2+1}m$, where $D=\herdisc(\PP)^t$. On the other hand,
it is well known that any system of $M$ sets on $n$ points has
discrepancy $O(\sqrt{n\log M}\,)$ (this is witnessed by a random coloring;
see, e.g., \cite{s-tlpm-87,Chazelle-book,m-gd}), which in our case,
with $n=k^t$ and $M=m^t$, shows that $\herdisc(\PP^t)$
is at most of order $k^{t/2+1/2}\approx D/(\log m)^{t/2-1/2}$,
which differs from the lower bound only by \cut{the (usual?)} a
factor of $\log^{3/2}m$, independent of~$t$.

\section{On Tusn\'ady's problem}
\label{s:tusnady}

\begin{proof}[Proof of Theorem~\ref{t:tusnady}] The proof
was already sketched in the introduction, so here we
just present it slightly more formally.
Let $\AA_d\subseteq\RR_d$ be the set of all \emph{anchored}
axis-parallel boxes, of the form $[0,b_1]\times\cdots\times [0,b_d]$.
Clearly $\disc(n,\AA_d)\le\disc(n,\RR_d)$, and since every box
$R\in\RR_d$ can be expressed as a signed combination of at most $2^d$
anchored boxes, we have $\disc(n,\RR_d)\le 2^d\disc(n,\AA_d)$.

Let us consider the $d$-dimensional
grid $[n]^d\subset\R^d$ (with $n^d$ points), and let
$\GG_{d,n}=\AA_d([n]^d)$ be the subsets induced on it by anchored boxes.
It suffices to prove that $\herdisc \GG_{d,n}=\Omega(\log^{d-1}n)$,
and for this, in view of inequality \eqref{e:NTineq1},
it is enough to show that $\enorm{\GG_{d,n}}=\Omega(\log^d n)$.

Now $\GG_{d,n}$ is (isomorphic to) the $d$-fold product $\II_n^d$
of the system of initial segments in $\{1,2,\ldots,n\}$,
and so $\enorm{\GG_{d,n}}=\enorm{T_n}^d=\Theta(\log^dn)$
(Theorem~\ref{t:krone} and Proposition~\ref{p:ints}).

This finishes the proof of the lower bound. To prove the upper bound
$\disc(n,\RR_d)=O(\log^{d+1/2}n)$, we consider an arbitrary
$n$-point set $P\subset\R^d$. Since the set system $\AA_d(P)$
is not changed by a monotone transformation of each of the coordinates,
we may assume $P\subseteq [n]^d$. Hence
\[
\disc(\AA_d(P))\le\herdisc \GG_{d,n}
\le O(\enorm{\GG_{d,n}}\sqrt{\log n^d}\,)=
O(\log^{d+1/2}n).
\]
\end{proof}

\heading{Near-optimality of the bounds in Theorems~\ref{t:disjpieces}
and~\ref{t:mult}.}
In  Theorem~\ref{t:mult} (discrepancy for the product
of set systems), if we set $\FF_i=\II_m$ for all $i=1,2,\ldots,t$,
then the product of $\herdisc \FF_i$ is $D=1$, while 
the hereditary discrepancy of the product is $\Omega(\log^{t-1}m)$
assuming $t$ constant. The upper bound in Theorem~\ref{t:mult}
is $O(\log^{t+1/2}m)$.

For Theorem~\ref{t:disjpieces} (sets made of disjoint pieces),
we take $\FF$ to be the set system $\GG_{d,n}$ induced on the
grid $[n]^d$ by anchored axis-parallel boxes, with hereditary discrepancy
at least $\Omega(\log^{d-1}n)$. 

To define the systems $\FF_i$, we use \emph{canonical binary boxes}.
First let us define a \emph{(binary) canonical interval} in $[n]$
as a set  of the form $I=[a2^i,(a+1)2^i)\cap [n]$,
with $i$ and $a$ nonnegative integers.
Let us call $2^i$ the \emph{size} of such a canonical interval.
As is well known, and easy to see, every initial interval
$J=\{1,2,\ldots,j\}\subseteq[n]$ can be expressed as a disjoint
union of canonical intervals, with at most one canonical intervals
for every size (and consequently, there are $O(\log n)$
canonical intervals in the union).

Next, a \emph{canonical box} in $[n]^d$ is a product 
$B=I_1\times\cdots\times I_d$
of canonical intervals. The
\emph{size} of $B$ is the $d$-tuple $(2^{i_1},\ldots,2^{i_d})$, where
$2^{i_j}$ is the size of $I_j$. Clearly, every 
set in $\GG_{d,n}$  is a disjoint union of $O(\log^d n)$
canonical boxes, at most one for every possible size.

Let $T$ be the set of all sizes of canonical boxes, $|T|=\Theta(\log^d n)$,
and for every  size $s\in T$, let $\BB_s$ be the
system of all canonical boxes of size $s$, plus the empty set. By the above,
each set of $\GG_{d,n}$ is a disjoint union $\bigcup_{s\in T} B_s$
for some $B_s\in\BB_s$, and so the $\BB_s$ can play the role
of the $\FF_i$ in Theorem~\ref{t:disjpieces}, with $t=|T|$.

We have $D=\herdisc\BB_s=1$ for every $s$ (since the canonical boxes
of a given size are pairwise disjoint), and so 
$\herdisc\GG_{d,n}=\Omega(t^{1-1/d})D$ for every constant $d$.
Hence if, in the setting of Theorem~\ref{t:disjpieces},
 $D=\max\herdisc\FF_i$, we cannot bound $\herdisc\FF$ by
$O(t^{1-\delta}(\log |\FF|)^c D)$ for any fixed $c$ and $\delta>0$ (unlike for 
the union of set systems in Theorem~\ref{t:un}, 
where the bound is roughly $\sqrt t\cdot D$,
up to a logarithmic factor).

\cut{\heading{Discrepancy of equal-size cubes}

\newcommand{\cubes}[1]{\mathcal{Q}_{#1}}
\newcommand{\cubesdsn}{\cubes{d,s,n}}

Theorem~\ref{t:krone} and Lemma~\ref{lm:nuclear-intervals}  let us
analyze the discrepancy of another interesting subset of $\RR_d$: the
system $\cubes{d,s}(P)$ of sets induced on $P \subset \N^d$ by
axis-parallel \emph{cubes} with side-length $s$. We give nearly
matching upper and lower bounds on the discrepancy of $\cubes{d,s}(P)$
as a function of $s$. The assumption that $P$ is a subset of the
lattice $\N^d$ is a way to normalize the cube side length $s$ in
relation to the shortest distance between points in $P$.

\begin{theorem}
  For every fixed $d \geq 2$, every $n$, and every
  integer $s \leq \frac{3}{4}n$, there exists a point set $P \subseteq
  [n]^d$ with 
  \[
  \disc \cubes{d,s}(P) = \Omega\left(\frac{\log^d s}{\log n}\right).
  \]
  Moreover, for every $n$-points set $P \subset \N^d$ and every $s
  \leq n$, $\disc \cubes{d,s}(P) = O(\log^d s \sqrt{\log n})$. 
\end{theorem}
\begin{proof}
  Let us denote $\cubes{d,s}([n]^d)$ 
  by $\cubesdsn$. For the lower bound, it suffices to show that
  $\herdisc \cubesdsn = \Omega\left(\frac{\log^d s}{\log n}\right)$,
  and, in view of \eqref{e:NTineq1}, it is enough to prove that
  $\enorm{\cubesdsn} = \Omega(\log^d s)$. Consider the circulant
  matrix $\conv{s,n}$ from Lemma~\ref{lm:nuclear-intervals}, and let
  $A_{s,n}$ be the incidence matrix of the set system $\cubes{1,s,n}$
  indexed by size $s$ intervals on $[n]$. Each row in $\conv{s,n}$ is
  the sum of at most two rows from $A_{s,n}$. Therefore, by the
  triangle inequality for $\enorm{\cdot}$ (Proposition~\ref{p:mnorm}),
  and the fact that $\enorm{\cdot}$ is non-increasing under removal of
  rows (which follows from the definition and Lemma~\ref{l:transp}),
  $\enorm{A_{s,n}} \geq \frac{1}{2}\enorm{\conv{s,n}}$. Using the dual
  characterization of the ellipsoid norm in Theorem~\ref{t:e8dual}, we
  can set $P = Q = \frac{1}{n}I_n$, and bound
  $\enorm{\conv{s,n}}$ from below by $\frac{1}{n}\|\conv{s,n}\|_*$. These two
  bounds and the lower bound on $\|\conv{s,n}\|_*$ in Lemma~\ref{lm:nuclear-intervals} imply
  \[
  \enorm{\cubes{1,s,n}} = \enorm{A_{s,n}} \geq
    \frac{1}{2}\enorm{\conv{s,n}} = \Omega(\log  s).  
  \]
  The system $\cubesdsn$ is isomorphic to the $d$-fold product
  $\cubes{1,s,n}^d$, and, by Theorem~\ref{t:krone}, $\enorm{\cubesdsn}
  = \enorm{A_{s,n}}^d = \Omega(\log^d s)$. This completes the proof of
  the lower bound.

  For the upper bound, we can assume without loss of generality that
  $P \subseteq [n]^d$. Then, by \eqref{e:NTineq2}, it is enough to
  prove that $\enorm{\cubesdsn} = O(\log^d s)$, which is in turn
  implied by Theorem~\ref{t:krone} and the bound
  $\enorm{\cubes{1,s,n}} = O(\log s)$. Let us
  divide $[n]$ into $k := \lceil \frac{n}{s} \rceil$ segments of length
  at most $s$ each: $\{1, \ldots, s\}$, $\{s+1, \ldots, 2s\}$, and so
  on. On each segment $\{(i-1)s + 1, \ldots, \min\{is, n\}\}$, $1\leq i \leq
  k$, we define a set
  system ${\cal F}_i$ which is the union of all initial segments
  $\{(i-1)s+1, \ldots, j\}$ and final segments $\{j, \ldots,
  \min\{is,n\}\}$ over $(i-1)s+1 \leq j \leq \min\{is, n\}$. Each $\FF_i$ is the union of two set systems, each
  isomorphic to $\II_s$. Then, by Lemma~\ref{l:union}
  and Proposition~\ref{p:ints}, for all $i$, $\enorm{\FF_i} = O(\log
  s)$. Each interval in $\cubes{1,s,n}$ is the union of at most one
  final segment from $\FF_{i-1}$ and one initial segment from
  $\FF_{i}$, for some $1 \leq i \leq k$. So, by
  Proposition~\ref{p:mnorm}, $\enorm{\cubes{1,s,n}} = O(\log s)$, and
  this finishes the proof of the upper bound. 
  
\end{proof}}

\subsection{Discrepancy of boxes in high dimension} \label{s:hibox}

Chazelle and Lvov~\cite{chl-tbhd,chaz-lvov-boxes} investigated
the hereditary discrepancy of the set system $\CC_d:=\RR_d(\{0,1\}^d)$, 
the set system induced by
axis-parallel boxes on the $d$-dimensional Boolean cube
 $\{0,1\}^d$. In other words, the sets in $\CC_d$
are subcubes of $\{0,1\}^d$.
Unlike for Tusn\'ady's problem where $d$
was considered fixed, here one is interested in the asymptotic behavior 
as $d\to\infty$.

Chazelle and Lvov proved $\herdisc \CC_d=\Omega(2^{cd})$ for an 
absolute constant $c \approx 0.0477$, which was later improved
to $c=0.0625$ in \cite{NT-HAP} (in relation to the hereditary discrepancy of
homogeneous arithmetic progressions). Here we obtain an optimal value
of the constant~$c$:


\begin{theorem}\label{t:boxes-highdim}
  The system $\CC_d$ of subcubes of the $d$-dimensional Boolean cube 
satisfies
  \[
  \herdisc \CC_d = 2^{c_0 d + o(d)},
  \]
  where $c_0 = \log_2(2/\sqrt 3)\approx 0.2075$. The same bound holds
for the system $\AA_d(\{0,1\}^d)$ of all subsets of the cube 
induced by anchored
boxes.
\end{theorem}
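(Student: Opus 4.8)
\textbf{Proof proposal for Theorem~\ref{t:boxes-highdim}.}
The plan is to compute both bounds through the $\gamma_2$ norm, exactly as for Tusn\'ady's problem, but now tracking the dependence on $d$ since $d\to\infty$. First I would reduce from all boxes to anchored boxes: every subcube of $\{0,1\}^d$ is, coordinatewise, of the form $\prod_{j}I_j$ where each $I_j\subseteq\{0,1\}$, and $\{0,1\}=\{0,1\}$ has only the subsets $\emptyset,\{0\},\{1\},\{0,1\}$, each of which is a signed combination of the two anchored intervals $[0,0]$ and $[0,1]$ (i.e.\ $\{1\}=[0,1]\setminus[0,0]$). Hence every box-induced set is a $\pm1$ combination of at most $2^d$ anchored-box sets, so $\herdisc\CC_d$ and $\herdisc\AA_d(\{0,1\}^d)$ agree up to a factor $2^d=2^{o(d)}\cdot$nothing---wait, $2^d$ is \emph{not} $2^{o(d)}$, so I must be more careful: instead I would observe that $\CC_d$ and $\AA_d(\{0,1\}^d)$ have incidence matrices that are Kronecker products of $2\times 2$ matrices, compute $\gamma_2$ of each directly via Theorem~\ref{t:krone}, and only then invoke \eqref{e:NTineq1}, \eqref{e:NTineq2}.

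Concretely: the incidence matrix of $\AA_1(\{0,1\})$ (anchored intervals in the $1$-point-per-coordinate ``cube'' $\{0,1\}$) is the $2\times 2$ matrix $M_\AA=\begin{pmatrix}1&0\\1&1\end{pmatrix}=T_2$, and the incidence matrix of $\RR_1(\{0,1\})$, listing all four subsets of $\{0,1\}$, is $M_\CC=\begin{pmatrix}0&0&1&1\\0&1&0&1\end{pmatrix}$ (rows indexed by the two points, columns by $\emptyset,\{0\},\{1\},\{0,1\}$). The incidence matrix of $\AA_d(\{0,1\}^d)$ is then $M_\AA^{\otimes d}$ and that of $\CC_d$ is $M_\CC^{\otimes d}$, so by Theorem~\ref{t:krone}
\[
\enorm{\AA_d(\{0,1\}^d)}=\enorm{M_\AA}^d,\qquad \enorm{\CC_d}=\enorm{M_\CC}^d.
\]
The core computation is therefore to evaluate the two constants $\enorm{M_\AA}$ and $\enorm{M_\CC}$ exactly, and to check they both equal $2/\sqrt3$. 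For $M_\CC$ one can guess the optimal $0$-centered ellipse in $\R^2$ containing the four points $(0,0),(0,1),(1,0),(1,1)$ (equivalently $\pm$ shifts; note these are not centrally symmetric, which is the subtle point) with smallest $\ell_\infty$ norm, and match it against the dual bound from Theorem~\ref{t:e8dual} with a well-chosen diagonal $P,Q$; a symmetric $2\times 2$ semidefinite program is small enough to solve in closed form, and $2/\sqrt3$ should drop out. For $M_\AA=T_2$ the same small SDP applies. Given $\enorm{M_\AA}=\enorm{M_\CC}=2/\sqrt3$, we get $\enorm{\CC_d}=(2/\sqrt3)^d=2^{c_0 d}$ with $c_0=\log_2(2/\sqrt3)$, and likewise for $\AA_d(\{0,1\}^d)$.

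Finally I would feed this into the two master inequalities. The lower bound $\herdisc\CC_d\ge \enorm{\CC_d}/(C\log m)$ from \eqref{e:NTineq1}, with $m\le 3^d$ rows (or using $\log\rank$, even better), gives $\herdisc\CC_d\ge 2^{c_0 d}/\mathrm{poly}(d)=2^{c_0 d-o(d)}$. The upper bound $\herdisc\CC_d\le C\sqrt{\log m}\cdot\enorm{\CC_d}\le \mathrm{poly}(d)\cdot 2^{c_0 d}=2^{c_0 d+o(d)}$ from \eqref{e:NTineq2} closes the gap, since polynomial-in-$d$ factors are absorbed into $2^{o(d)}$. The identical argument with $M_\AA^{\otimes d}$ handles the anchored-box system. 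The main obstacle I anticipate is the exact evaluation $\enorm{M_\CC}=2/\sqrt3$: unlike $T_n$ this matrix has only $2$ rows so the ellipsoid is a planar ellipse, but the four points are in ``general position'' rather than symmetric, so one must be careful to optimize over all $0$-centered ellipses (not axis-aligned ones) and verify optimality via the trace-norm dual; getting the constant exactly right, rather than up to a factor, is the delicate part and is what pins down $c_0$ precisely.
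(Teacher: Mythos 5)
Your proposal follows the paper's proof essentially step for step: reduce via \eqref{e:NTineq1}, \eqref{e:NTineq2} to computing $\gamma_2$ (noting the $\log m$ factors are absorbed into $2^{o(d)}$), use Kronecker multiplicativity (Theorem~\ref{t:krone}) to reduce to $d=1$, and evaluate the $\gamma_2$ norm of the resulting small matrix exactly. The only step you leave open---the exact evaluation $\enorm{T_2}=\enorm{\CC_1}=2/\sqrt3$, which you correctly flag as the delicate part---is carried out explicitly in the paper: the upper bound comes from exhibiting the ellipse $\{x\in\R^2:x_1^2+x_2^2-x_1x_2\le1\}$, which contains all of $(0,1),(1,0),(1,1)$ on its boundary and has $\ell_\infty$ norm $2/\sqrt3$, and the matching lower bound comes from Theorem~\ref{t:e8dual} with $P=\diag(\tfrac13,\tfrac23)$, $Q=\diag(\tfrac23,\tfrac13)$, whose singular values for $P^{1/2}T_2Q^{1/2}$ are $\tfrac1{\sqrt3}\pm\tfrac13$, summing to $2/\sqrt3$. (One small bookkeeping remark: your $M_\CC$ is set up with rows as points and columns as sets, i.e.\ the transpose of the paper's incidence-matrix convention, and includes a redundant zero column for $\emptyset$; this is harmless because $\gamma_2$ is transposition-invariant and unchanged by zero columns, but when you plug into \eqref{e:NTineq1}, \eqref{e:NTineq2} you must use $m=3^d$, the number of sets, as you indeed do.)
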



\begin{proof} The number of sets in $\CC_d$ is $3^d$, and so
in view of inequalities
\eqref{e:NTineq1} and \eqref{e:NTineq2} it suffices  to prove
$\enorm{\CC_d} = \enorm{\AA_d(\{0,1\}^d)}=2^{c_0 d}$. 

The system $\CC_d$ is the $d$-fold product $\CC_1^d$, and so
by Theorem~\ref{t:krone}, $\enorm{\CC_d}=\enorm{\CC_1}^d$.
The incidence matrix of $\CC_1$ is
\[
A =
\begin{pmatrix}
  1 & 1\\
  1 & 0\\
  0 & 1
\end{pmatrix}.
\]
To get an upper bound on $\enorm{A}$, we exhibit an appropriate
ellipsoid; it is more convenient to do it for $A^T$, since this
is a planar problem. The optimal ellipse containing
the rows of $A$ is  $\{x\in\R^2: x_1^2 +x_2^2 -
x_1x_2 \leq 1\}$; here are a picture and the dual matrix:
\[
\raisebox{-1.5cm}{\includegraphics{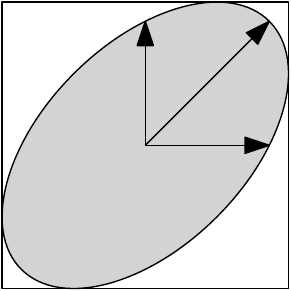}}\ \ \ \ \ \ \ \ \ \ \ \  D = \begin{pmatrix}
  \frac43 &\frac 13\\[1mm]
  \frac13 &\frac43
\end{pmatrix}.
\]
Hence $\enorm{A}\le 2/\sqrt 3$. The same ellipse also works for
the incidence matrix of the system $\AA_1(\{0,1\})$, which is the familiar
lower triangular matrix~$T_2$.

There are several ways of bounding $\enorm {T_2}\le\enorm A$
from below. For example, we can use Theorem~\ref{t:e8dual}
with 
\[
P=\begin{pmatrix}\frac 13&0\\[1mm]0&\frac 23\end{pmatrix},\ \ \ 
Q=\begin{pmatrix}\frac 23&0\\[1mm]0&\frac 13\end{pmatrix}.
\]
With some effort (or a computer algebra system) one can check
that the singular values of $P^{1/2}T_2Q^{1/2}$ are
$\frac1{\sqrt 3}\pm \frac 13$, and hence the nuclear norm
is $2/\sqrt 3$ as needed.

Alternatively, one can also check the optimality of the ellipse above
by elementary geometry, or exhibit an optimal solution of the dual
semidefinite program for $\enorm{T_2}$.
%
%
%
\end{proof}

\section{On combinatorial \boldmath$L_p$-discrepancy}
\label{s:Lpbound}

\heading{$L_p$-discrepancy in the continuous setting.}
Roth's beautiful argument \cite{r-id-54} for the lower bound 
$D(n,\RR_d)=\Omega(\log^{(d-1)/2}n)$ actually bounds
the discrepancy of an \emph{average} anchored axis-parallel box.
More precisely, Roth introduced the $p=2$ case of the following
notion of \emph{$L_p$-discrepancy} of an $n$-point set $P
\subset [0,1]^d$ with respect to anchored boxes, defined by
\[
D_p(P,\AA_d) := \left(\int_{[0,1]^d}{\Bigl||P\cap A(x)|-\lambda^d(A(x))\Bigr|^p dx}\right)^{1/p},
\]
where $A(x) := [0, x_1] \times \ldots \times [0,x_d]$. 
This kind of discrepancy has also been investigated extensively since
then, and its importance, e.g.~for the theory of numerical integration,
is comparable to the original ``worst--case'' discrepancy $D(P,\RR_d)$.

While the asymptotic behavior of $D(n,\RR_d)$ remains a mystery,
it turns out that the $L_p$-discrepancy $D_p(n,\AA_d)$
is of order $\log^{(d-1)/2}n$ for every fixed $p$ and $d$,
matching Roth's lower bound.
This was shown by Davenport \cite{d-nid-56} for $d=p=2$,
by Roth \cite{r-idIV-80} for $p=2$ and all $d$, and by
Chen \cite{c-id-81} (for all $p$).

\heading{Combinatorial $L_p$-discrepancy. }
A similar kind of average discrepancy can also be considered in the
combinatorial setting, as was done, e.g., in \cite{s-idbsm-96,m-beckfilp}.
Namely, for a set system $\FF$ on the ground set $[n]$ we set
\[
\disc_p\FF:=\min_{x \in \{-1, 1\}^n}\biggl(
\frac1{|\FF|}\sum_{F\in\FF}|x(F)|^p\biggr)^{1/p},
\]
with $x(F)=\sum_{j\in F} x_j$.
More generally, for a nonnegative weight function $w\:\FF\to[0,\infty)$,
not identically $0$, we similarly
define 
\[
\disc_{p,w} \FF := \min_{x \in \{-1, 1\}^n}\biggl( \tfrac1{w(\FF)}
\sum_{F\in\FF}w(F)|x(F)|^p\biggr)^{1/p}.
\]

In this section we provide some general results concerning the
combinatorial $L_p$-discrepancy, and we establish a lower
bound for anchored boxes (an $L_p$-version of Tusn\'ady's problem).

For a point set $P\subset[0,1]^d$, we let $\AA_d(P)$ be the system
of all intersections of $P$ with anchored boxes as before,
and let $\overline w=\overline w_P\: \AA_d(P)\to [0,1]$ be the
weight function given by
$\overline w(F):=\lambda^d\{x \in [0,1]^d:A(x)\cap P=F\}$; that is, the weight
of a subset of $P$ is the Lebesgue measure of the set of all corners $x$
whose corresponding anchored boxes $A(x)$ intersect $P$ in~$F$.

\begin{theorem}\label{t:l2-disc-boxes}
For every fixed $d\ge 2$ and infinitely many values of $n$,
there is an $n$-point set $P\subset\R^d$ such that
\[
\disc_{2,\overline w}\AA_d(P)=\Omega(\log^{d-1}n ).
\]
\end{theorem}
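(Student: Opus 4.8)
The plan is to turn the weighted $L_2$-discrepancy into a quadratic form and then feed it into the $\detlb$/$\gamma_2$ machinery applied to a Kronecker power of $T_n$. First I would reformulate: for any finite $P\subset[0,1]^d$ we have $w(\AA_d(P))=\lambda^d([0,1]^d)=1$, and, writing $x(F)=\sum_{p\in F}x_p$ and $A(y)=[0,y_1]\times\cdots\times[0,y_d]$,
\[
\disc_{2,\overline w}\AA_d(P)^2=\min_{x\in\{-1,1\}^P}\int_{[0,1]^d}x\bigl(A(y)\cap P\bigr)^2\,\dd y=\min_{x\in\{-1,1\}^P}x^TM_Px,
\]
where $M_P$ is the positive semidefinite $P\times P$ matrix $(M_P)_{p,q}=\int_{[0,1]^d}\mathbf 1[p\le y]\,\mathbf 1[q\le y]\,\dd y=\prod_{i=1}^d\bigl(1-\max(p_i,q_i)\bigr)$; equivalently $M_P=B_P^TB_P$, where $B_P$ is the natural ``incidence operator'' sending a coloring $x$ to the function $y\mapsto x(A(y)\cap P)$. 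So the theorem asks for a point set $P$, $n=|P|$, on which \emph{no} $\pm1$ coloring makes $x^TM_Px$ smaller than $\Omega(\log^{2(d-1)}n)$.

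Next, the choice of $P$. If we place $n$ points so that their vectors of coordinate ranks form a prescribed permutation array $S\subseteq[n]^d$ (whose projection to each coordinate is a bijection onto $[n]$) and so that the $i$-th coordinates are evenly spaced, then $M_P=(n+1)^{-d}\bigl[(T_n^TT_n)^{\otimes d}\bigr]_S$, the principal submatrix of $(T_n^TT_n)^{\otimes d}$ on the rows and columns indexed by $S$. Hence it suffices to produce $S$ with $\min_{x\in\{-1,1\}^n}x^T\bigl[(T_n^TT_n)^{\otimes d}\bigr]_S\,x=\Omega\bigl((n+1)^d\log^{2(d-1)}n\bigr)$. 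The full grid ($S=[n]^d$) is useless here: an alternating coloring gives $\|T_n^{\otimes d}x\|_2^2=\Theta(n^d)$ and hence only $\disc_{2,\overline w}=\Theta(1)$, so $S$ must be placed on the ``heavy'' part of $(T_n^TT_n)^{\otimes d}$. The quantitative input that makes this possible is $\enorm{T_n}=\Theta(\log n)$ (Proposition~\ref{p:ints}), whence $\enorm{T_n^{\otimes d}}=\Theta(\log^d n)$ by Theorem~\ref{t:krone} and therefore $\detlb(T_n^{\otimes d})=\Omega(\log^{d-1}n)$ by Theorem~\ref{t:detlb-E8}; $S$ should be chosen (up to permutation-array completion) along the column set of a square submatrix realizing this determinant bound. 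Working with the entire Kronecker power---rather than with a single hard box, which is all $\detlb$ literally provides---is what is needed: by the product structure the hard configuration recurs, with full strength, at every dyadic scale, so that a \emph{weighted average} over boxes, not merely one large box, can reach $\log^{2(d-1)}n$.

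The hard part is the last step: deducing a lower bound on $x^TM_Px$ valid for \emph{every} $\pm1$ coloring simultaneously. The Lov\'asz--Spencer--Vesztergombi determinant bound is an $\ell_\infty$ statement about the \emph{worst} box, while a purely spectral bound $x^TM_Px\ge\lambda_{\min}(M_P)\,n$ loses a polynomial-in-$n$ factor, since $\lambda_{\min}$ of these matrices is far too small. To bridge the gap I would decompose $T_n=A_1+\cdots+A_t$, $t=O(\log n)$, into its canonical-interval pieces as in Section~\ref{s:TnUB}; this induces a decomposition of $(T_n^TT_n)^{\otimes d}$ into $O(\log^d n)$ ``scale blocks'' indexed by $d$-tuples of dyadic levels, which behave almost orthogonally. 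I would then bound the contribution of each scale block to $x^TM_Px$ using the integrality of $x$---a zonotope-volume argument in the spirit of the determinant bound, but tracking the Euclidean norm of $B_Px$ rather than its $\ell_\infty$ norm---and sum over the $\Theta(\log^{d-1}n)$ relevant scale tuples. The subtlety, and the reason a naive summation would yield only the weaker exponent $\log^{(d-1)/2}n$ familiar from Roth-type orthogonality, is that one needs the per-scale contributions to be of order $\log^{d-1}n$ rather than $\Omega(1)$; securing this is exactly what forces the careful, $\detlb$-guided choice of the permutation array $S$, so that the hard sub-configuration sits inside every scale block.
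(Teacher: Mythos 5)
Your setup matches the paper's strategy in broad strokes: like the paper, you rewrite the weighted $L_2$-discrepancy as a quadratic form, embed the problem in a grid (so that the relevant matrix is a Kronecker power built from $T_n$), correctly identify that the \emph{full} grid gives only $O(1)$ (via a checkerboard coloring), and recognize that the quantitative engine is $\enorm{T_n^{\otimes d}}=\Theta(\log^d n)$ together with the $\detlb$--$\gamma_2$ relation of Theorem~\ref{t:detlb-E8}. The paper indeed passes to a suitable sub-point-set of the grid, so on these points you are in good shape.

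The genuine gap is the step you yourself flag as ``the hard part.'' You want to prove, for \emph{every} $\pm1$ coloring, a lower bound on the quadratic form $x^T M_P x$ by decomposing $T_n$ into dyadic pieces $A_1,\dots,A_t$, claiming the induced scale blocks of $(T_n^TT_n)^{\otimes d}$ ``behave almost orthogonally,'' and running a ``zonotope-volume argument \ldots\ tracking the Euclidean norm.'' None of this is established, and two specific claims are problematic. First, $T_n^TT_n=\sum_{i,j}A_i^TA_j$ has nonvanishing cross terms $A_i^TA_j$ for $i\ne j$ (the $A_i$ partition the \emph{support} of $T_n$, not its column space), so there is no obvious sense in which the scale blocks decouple; a Roth-style orthogonality argument for $L_2$-discrepancy naturally gives a per-scale contribution of $\Omega(1)$, which with $\Theta(\log^{d-1}n)$ dominant scale tuples yields $D_2=\Omega(\log^{(d-1)/2}n)$ --- exactly the Roth exponent you are trying to beat --- and you provide no mechanism for upgrading each per-scale contribution to $\log^{d-1}n$. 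Second, the ``zonotope-volume argument in the spirit of the determinant bound, but for $\ell_2$'' is precisely the content of a nontrivial lemma (Lemma~\ref{l:l2-detlb}, the $L_2$-version of Lov\'asz--Spencer--Vesztergombi, stating $\herdisc_2\ge c\cdot\detlb_2$ with $\detlb_2 A=\max_J\sqrt{|J|/m}\,|\det A_J^TA_J|^{1/2|J|}$); you cannot treat it as a routine adaptation. The paper's route is sharper: Lemma~\ref{l:ell2-disc-lb} observes that the intermediate matrix $D=P^{1/2}A_J$ produced by the proof of Theorem~\ref{t:detlb-E8} is already a $\detlb_2$-witness --- the $\sqrt{|J|/m}$ factor in $\detlb_2$ absorbs exactly the $\sqrt{k/e}$ from the Binet--Cauchy lemma --- and then the only point-set-specific input is that for $T_n^{\otimes d}$ the dual maximizers $P,Q$ in Theorem~\ref{t:e8dual} can be taken uniform, so the row weights match $\overline w$ after rescaling the grid. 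This is a single global spectral/bucketing argument, with no scale-by-scale decomposition. Finally, your restriction to permutation arrays (and ``permutation-array completion'') is unmotivated and potentially harmful: the determinant-realizing column set of $T_n^{\otimes d}$ has no reason to project bijectively onto $[n]$ in each coordinate, and forcing it to may destroy the determinant bound; the theorem only asks for \emph{some} $n$-point set, and any subset $J$ of the grid works after the coordinate-rank compression.
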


Thus, the combinatorial $L_2$-discrepancy for axis-parallel anchored
boxes has the same lower bound as the worst-case discrepancy, 
and it is roughly the square of the $L_2$-discrepancy in the
Lebesgue-measure case. (Admittedly, the analogy between the
$L_2$-discrepancy in the Lebesgue-measure and combinatorial cases
is far from perfect.)

We start working towards the proof of the theorem.
First we extend the definition of $L_p$-discrepancy to matrices
in a natural way:
for an $m\times n$ matrix $A$ we set
\[
\disc_p A:=\min_{x\in\{-1,1\}^n}m^{-1/p}\|Ax\|_p.
\]
The hereditary analog, $\herdisc_p A$, is naturally defined
as $\max_{J\subseteq [n]}\disc_p A_J$.

Now let us consider a weight function $w\:[m]\to[0,\infty)$ on the rows of
$A$. It is useful to observe that the corresponding
weighted $L_p$-discrepancy of $A$ can be written using the unweighted
discrepancy of $A$ suitably modified---namely, the $i$th row
needs to be multiplied by $w(i)^{1/p}$, assuming $w$ normalized so
that $\sum_{i=1}^m w(i)=m$. Then, with this normalization of $w$
and with $W:=\diag(w)$ being the $m\times m$ matrix with the $w(i)$
on the diagonal, we can write
\[
\disc_{p,w} A=\disc_p W^{1/p}A.
\]

Let us consider the following $L_2$-version of the determinant lower bound:
\[
\detlb_2 A := \max_{J:\emptyset\ne J\subseteq[n]}
 \sqrt{|J|/m}\cdot |\det A_J^T A_J|^{1/2|J|}.
\]
The following is proved 
in the journal version of~\cite{NTZ-diffprivacy} by an easy modification
of the argument of Lov\'asz et al.~\cite{lsv-dssm-86}:

\begin{lemma}\label{l:l2-detlb}
There exists a constant $c > 0$ such that for every $m\times n$ matrix $A$,
\[
  \herdisc_2 \geq c\detlb_2 A.
\]
\end{lemma}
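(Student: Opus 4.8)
The plan is to imitate the proof of the determinant lower bound $\herdisc A\ge\frac12\detlb A$ of Lov\'asz, Spencer, and Vesztergombi~\cite{lsv-dssm-86}, carrying out each step with the Euclidean norm $\|\cdot\|_2$ in place of $\|\cdot\|_\infty$. In that proof, for a $k\times k$ submatrix $B$ the quantity $|\det B|$ is, up to the factor $2^k$, the volume of the parallelepiped $B([-1,1]^k)$; here the analogue, for a column set $J$ with $|J|=k$, is the $k$-dimensional parallelepiped $A_J([-1,1]^J)$ spanned by the columns of $A_J$, whose $k$-dimensional volume equals $2^k\sqrt{\det A_J^T A_J}$. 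I expect the normalizing factor $\sqrt{|J|/m}$ appearing in $\detlb_2$ to be exactly what is needed to absorb the volume of a $k$-dimensional Euclidean ball, which is what makes the constant $c$ absolute.

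First I would record the $\ell_2$ form of the classical rounding (linear discrepancy) lemma: if $\herdisc_2 A\le D$ and $J\subseteq[n]$, then for every $w\in[-1,1]^J$ there is $x\in\{-1,1\}^J$ with $\|A_J(x-w)\|_2\le 2D\sqrt m$. The proof is the usual one: for $w$ with a finite binary expansion one eliminates the binary digits of $w$ one at a time, starting from the least significant; eliminating a digit amounts to shifting the coordinates of $w$ that carry it by a $\pm1$-coloring $\chi$ of that coordinate set $S\subseteq J$, chosen so that $\|A_S\chi\|_2\le\disc_2(A_S)\sqrt m\le D\sqrt m$ (this is where hereditary discrepancy enters, since $S\subseteq J$), and the total shift applied to $A_J w$ is bounded, via the triangle inequality for $\|\cdot\|_2$, by $\sum_{i\ge1}2^{-i}D\sqrt m< D\sqrt m$; rescaling from the cube $[0,1]^J$ to $[-1,1]^J$ costs a factor $2$, giving the stated bound. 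The general $w$ then follows from the dyadic case by a routine density argument, since the $2^k$ candidate points $A_J x$ are fixed and $A_J([-1,1]^J)$ is compact.

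Now fix a nonempty $J\subseteq[n]$ attaining the maximum in $\detlb_2 A$, put $k:=|J|$ and $D:=\herdisc_2 A$, and assume $A_J$ has rank $k$ (otherwise $\det A_J^T A_J=0$ and the inequality is trivial). Let $V\subseteq\R^m$ be the $k$-dimensional column space of $A_J$, which contains all the points $A_J x$, $x\in\{-1,1\}^J$. By the rounding lemma these $2^k$ points form a $(2D\sqrt m)$-net, in the Euclidean metric, of the parallelepiped $A_J([-1,1]^J)\subseteq V$, so $A_J([-1,1]^J)$ is covered by $2^k$ Euclidean balls of radius $2D\sqrt m$ lying in $V$. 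Comparing $k$-dimensional volumes inside $V$,
\[
2^k\sqrt{\det A_J^T A_J}=\mathrm{vol}_k\bigl(A_J([-1,1]^J)\bigr)\le 2^k\,\omega_k\,(2D\sqrt m)^k,
\]
where $\omega_k$ is the volume of the unit Euclidean ball in $\R^k$. Cancelling $2^k$, taking $k$-th roots, and multiplying by $\sqrt{k/m}$,
\[
\sqrt{k/m}\,|\det A_J^T A_J|^{1/2k}\le \omega_k^{1/k}\sqrt k\cdot 2D=O(D),
\]
since $\omega_k^{1/k}\sqrt k=\Theta(1)$. As $J$ attains $\detlb_2 A$, this reads $\detlb_2 A\le C\,\herdisc_2 A$ for an absolute constant $C$, i.e.\ the lemma with $c=1/C$.

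I do not expect a serious obstacle here --- the argument is a faithful transcription of the Lov\'asz--Spencer--Vesztergombi proof, which is why the paper calls it an easy modification. The two places that need care are the density step in the rounding lemma, and the arithmetic of the final display: the factor $\sqrt{|J|/m}$ hard-wired into $\detlb_2$ must cancel both the $\omega_k^{1/k}$ of order $k^{-1/2}$ (the volume of a $k$-dimensional ball) and the $\sqrt m$ coming from the $m^{-1/2}$ normalization in $\disc_2$, so that the resulting constant does not depend on $k$, $m$, or $n$.
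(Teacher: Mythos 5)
Your proof is correct and follows exactly the route the paper intends: the paper does not reproduce a proof of this lemma but simply cites the journal version of~\cite{NTZ-diffprivacy} and describes it as ``an easy modification of the argument of Lov\'asz et al.,'' which is precisely your $\ell_2$-transcription of the LSV rounding-plus-volume argument. The rounding step, the passage from an $\ell_2$-net in $\R^m$ to $k$-dimensional balls inside the column space $V$, and the cancellation of $\omega_k^{1/k}\sqrt{k}=\Theta(1)$ against the $\sqrt{|J|/m}$ normalization are all handled correctly.
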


We use this lemma, together with a modification of our proof of
inequality \eqref{e:NTineq1}, to establish the following:

\begin{lemma}\label{l:ell2-disc-lb}
Let $A$ be an $m\times n$ matrix, let $w\:[m]\to[0,\infty)$
be a nonnegative weight function on the rows normalized so that
$\sum_{i=1}^m w(i)=m$, and let $P=\frac 1m \diag(w)$.
Then for every nonnegative diagonal matrix $Q$ with unit trace we have
\[
  \|P^{1/2}AQ^{1/2}\|_* = O(\log m) \herdisc_{2,w} A.
\]
\end{lemma}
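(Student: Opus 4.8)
The plan is to run the argument for the second inequality of Theorem~\ref{t:detlb-E8} (the bound $\enorm A\le O(\log r)\cdot\detlb A$) in an asymmetric form: since the ``row weight'' $P=\tfrac1m\diag(w)$ is now fixed while only the ``column weight'' $Q$ is free, Lemma~\ref{l:binetcauchy} will be applied only once (to the columns) rather than twice, and the final conclusion is read off through the $L_2$ determinant bound $\detlb_2$ together with Lemma~\ref{l:l2-detlb} in place of $\detlb$ and the Lov\'asz--Spencer--Vesztergombi inequality.

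\emph{Reduction to an unweighted statement.} Write $W:=\diag(w)$, so that $\Tr W=m$ and $P^{1/2}=\tfrac1{\sqrt m}W^{1/2}$, and put $B:=W^{1/2}A$ and $M:=BQ^{1/2}=W^{1/2}AQ^{1/2}$. Then $\|P^{1/2}AQ^{1/2}\|_*=\tfrac1{\sqrt m}\|M\|_*$, and, using the row-rescaling identity $\disc_{p,w}(\cdot)=\disc_p W^{1/p}(\cdot)$ recorded before Lemma~\ref{l:l2-detlb}, for every $J\subseteq[n]$ we have $\disc_{2,w}A_J=\disc_2(W^{1/2}A_J)=\disc_2\bigl((W^{1/2}A)_J\bigr)$, hence $\herdisc_{2,w}A=\herdisc_2 B$. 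So it suffices to prove
\[
\|M\|_* = O(\sqrt m\,\log r)\cdot\detlb_2 B,\qquad r:=\rank M;
\]
combined with Lemma~\ref{l:l2-detlb} (giving $\detlb_2 B=O(\herdisc_2 B)$) and with $r\le m$, this yields $\tfrac1{\sqrt m}\|M\|_* = O(\log m)\herdisc_2 B=O(\log m)\herdisc_{2,w}A$, as required.

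\emph{The main estimate.} Assume $M\ne 0$ (otherwise the bound is trivial); let $\sigma_1\ge\cdots\ge\sigma_r>0$ be the nonzero singular values of $M$ and $M=U\Sigma V^T$ a singular value decomposition. By the standard bucketing argument (as in \cite[Lemma~7]{almosttight}) there is a scale $t>0$ such that $K:=\{i:t\le\sigma_i<2t\}$ has size $k:=|K|\ge 1$ and $\sum_{i\in K}\sigma_i=\Omega\bigl(\tfrac1{\log r}\bigr)\|M\|_*$; note $k\le r\le n$. Let $U_K$ be the $m\times k$ matrix of columns of $U$ indexed by $K$, and set $N:=U_K^T M$. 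Since $U_K^T U$ is the $k\times m$ coordinate projection onto $K$, since $\Sigma$ is diagonal, and since right multiplication by the orthogonal $V^T$ preserves singular values, $N$ has exactly the singular values $\sigma_i$, $i\in K$; therefore, by the near-tightness of AM--GM within a factor of $2$,
\[
|\det NN^T|^{1/2k}=\Bigl(\prod_{i\in K}\sigma_i\Bigr)^{1/k}\ \ge\ t\ >\ \frac1{2k}\sum_{i\in K}\sigma_i\ =\ \Omega\Bigl(\tfrac1{k\log r}\Bigr)\|M\|_*.
\]
Now set $C:=U_K^T B$, a $k\times n$ matrix, so that $N=CQ^{1/2}$ and $CQC^T=NN^T$. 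Applying Lemma~\ref{l:binetcauchy} with $C$ in the role of $A$ and the unit-trace nonnegative diagonal $Q$ in the role of $W$ produces a $k$-element set $J\subseteq[n]$ with $|\det C_J|^{1/k}\ge\sqrt{k/e}\cdot|\det NN^T|^{1/2k}$. Finally, $C_J=U_K^T B_J$ and $U$ is orthogonal, so Binet--Cauchy applied to the $m\times k$ matrix $U^T B_J$ gives $\det B_J^T B_J=\sum_{L}(\det U_L^T B_J)^2\ge(\det U_K^T B_J)^2=(\det C_J)^2$, with $L$ ranging over $k$-element subsets of $[m]$. Chaining everything,
\begin{align*}
\detlb_2 B
&\ge \sqrt{k/m}\cdot|\det B_J^T B_J|^{1/2k}\ \ge\ \sqrt{k/m}\cdot|\det C_J|^{1/k}\ \ge\ \sqrt{k/m}\cdot\sqrt{k/e}\cdot|\det NN^T|^{1/2k}\\
&\ge \sqrt{k/m}\cdot\sqrt{k/e}\cdot\Omega\Bigl(\tfrac1{k\log r}\Bigr)\|M\|_*\ =\ \Omega\Bigl(\tfrac1{\sqrt m\,\log r}\Bigr)\|M\|_*,
\end{align*}
where the two factors $\sqrt k$ cancel the $\tfrac1k$ from AM--GM; this is the desired estimate.

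\emph{On the difficulty.} There is no genuine obstacle, since the real content lives in Theorem~\ref{t:detlb-E8} and Lemma~\ref{l:l2-detlb}. The points that need care are the bookkeeping of the $\sqrt m$ factors---the $\tfrac1{\sqrt m}$ coming from $P^{1/2}=\tfrac1{\sqrt m}W^{1/2}$ must exactly cancel the $\sqrt{|J|/m}$ weight built into the definition of $\detlb_2$, which is why the powers of $k$ disappear---and the observation that, unlike in Theorem~\ref{t:detlb-E8}, only the column side requires Lemma~\ref{l:binetcauchy}, because the row weights $w$ have been absorbed into $B$ and hence already occur inside $\det B_J^T B_J$.
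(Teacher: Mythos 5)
Your proof is correct and follows essentially the same route as the paper's: the paper simply cites the intermediate estimate (bucketing plus a single application of Lemma~\ref{l:binetcauchy} on the column side) already isolated inside the proof of Theorem~\ref{t:detlb-E8}, whereas you rederive it, and both arguments then absorb the row weights into $\tilde A = W^{1/2}A$, pass to $\detlb_2\tilde A$, and invoke Lemma~\ref{l:l2-detlb}. Your accounting of the $\sqrt{m}$ and $\sqrt{k}$ factors, and the identification $\herdisc_{2,w}A=\herdisc_2 W^{1/2}A$, are also handled correctly.
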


\begin{proof}
In the proof of Theorem~\ref{t:detlb-E8}, we showed that if $Q$ is a
non-negative diagonal unit-trace matrix, then there
exists a submatrix $D = P^{1/2}A_J$ of $P^{1/2}A$ such that 
\[
|\det D^T D|^{1/2k} = \Omega\left(\frac{1}{\sqrt{k}\log
    m}\right)\|P^{1/2}AQ^{1/2}\|_*,
\]
where $k:=|J|$.
Setting $W := mP$ and $\tilde{A} := W^{1/2}A$,
the matrix $\sqrt m\cdot D$ is a witness for 
$\detlb_2 \tilde A=\Omega(1/\log m)\cdot
\|P^{1/2}AQ^{1/2}\|_*$. The lemma then follows
by applying Lemma~\ref{l:l2-detlb} to the matrix~$\tilde A$.
\end{proof}

\begin{proof}[Proof of Theorem~\ref{t:l2-disc-boxes}]
In the proof of Theorem~\ref{t:tusnady} we have shown that
$\enorm{\GG_{d,n}}=\Omega(\log^d n)$, where $\GG_{d,n}=\AA_d([n]^d)$
is the set system induced by anchored boxes on the grid $[n]^d$.
Unwrapping the proof shows that the diagonal matrices $P$ and $Q$ witnessing
the lower bound on $\enorm{\GG_{d,n}}$ via Theorem~\ref{t:e8dual}
can actually be taken uniform,
i.e., $P=Q=\frac1NI_N$, $N=n^d$. 

Therefore,
applying Lemma~\ref{l:ell2-disc-lb} with $A$ the incidence matrix of 
$\GG_{d,n}$, $P=\frac1NI_N$, and $w\equiv 1$
the uniform weight function, we obtain $\herdisc_2 A=\Omega(\log^{d-1}n)$.
The theorem then follows from the definition of $\herdisc_2 A$
(one can check that the weights of the subsets are given by $\overline w$
as in the theorem after appropriately scaling and shifting the grid $[n]^d$).  
\end{proof}


\section{Simple proofs of known discrepancy bounds}\label{s:sbounds}

The properties of the $\gamma_2$ norm allow for surprisingly
easy proofs of some known bounds in discrepancy theory; 
we have already seen this in the case of the upper bound for
Tusn\'ady's problem. Here we add some more examples, where
we obtain slightly suboptimal results.

For convenience, we first summarize the required properties.

\begin{enumerate}
\item[(A)] (Herdisc and $\gamma_2$) 
$\frac {\enorm A} {O(\log m)}  
\le \herdisc A \le \enorm A \cdot O(\sqrt{\log m})$; these are
 inequalities \eqref{e:NTineq1}, \eqref{e:NTineq2}.
\item[(B)] (Degree bound) If each point in a set system $\FF$ is in at most
$t$ sets, then $\enorm{\FF}\le\sqrt t$. (This is because
the columns of the incidence matrix $A$ are contained in the
ball of radius $\sqrt t$, or, equivalently, by the trivial
factorization $A = IA$.)
\item[(B$'$)] (Size bound) If all sets of $\FF$ have size at most $t$,
then $\enorm{\FF}\le\sqrt t$. (This is (B) and Lemma~\ref{l:transp},
or, equivalently, by the factorization $A = AI$.)
\item[(C)] (Union) If $\FF=\FF_1\cup\cdots\cup\FF_t$,
then $\enorm\FF\le \bigl(\sum_{i=1}^t\enorm{\FF_i}^2\bigr)^{1/2}$ 
(Lemma~\ref{l:union}).
\item[(D)] (Disjoint supports) If set systems $\FF_1$ and $\FF_2$
have disjoint ground sets, then $\enorm{\FF_1\cup\FF_2}=\max(\enorm{\FF_1},
\enorm{\FF_2})$ (Lemma~\ref{l:disjsupp}).
\item[(E)] (Sets from disjoint pieces)
If every set  $F\in \FF$ can be written as
a disjoint union $F_1\cup\cdots\cup F_t$, $F_i\in\FF_i$,
then $\enorm\FF\le \sum_{i=1}^t\enorm{\FF_i}$.
\item[(F)] (Product) $\enorm{\FF_1\times\FF_2}=
\enorm{\FF_1}\times\enorm{\FF_2}$.
\end{enumerate}

\heading{A bound in terms of the maximum degree. }
If $\FF$ has maximum degree $t$, i.e., no point is in more than
$t$ sets, then we get $\disc\FF=O(\sqrt{t\log m})$ by (A) and (B),
which recovers the current best bound
for this problem, due to Banaszczyk
\cite{banasz98}. However, this example is
not quite fair, since  inequality \eqref{e:NTineq2}
used in (A) relies on a more general form of Banaszczyk's estimate.

\heading{The $k$-permutation problem. }
Given a permutation $\pi$ of $\{1,2,\ldots,n\}$,
we consider the system $\PP_\pi$ of all initial segments along
$\pi$, i.e., the sets $\{\pi(1),\ldots,\pi(i)\}$,
$i=1,2,\ldots,n$. The \emph{$k$-permutation problem}
asks for the maximum discrepancy of $\PP:=\PP_{\pi_1}\cup\cdots
\cup\PP_{\pi_k}$, where $\pi_1,\ldots,\pi_k$ are $k$ permutations
of $\{1,2,\ldots,n\}$. For $k\ge 3$, the best known upper bound
is  $O(\sqrt k\cdot\log n)$ \cite{s-idbsm-96}, and it is sharp
for $k\ge 3$ fixed~\cite{newman2012beck}.

As is well known,  $\herdisc\PP_\pi\le 1$ for every $\pi$,
and so (A) and (C)  give $\disc\PP=O(\sqrt k\cdot\log^{3/2}n)$.

\heading{Arithmetic progressions. }
Let $\AP$ be the system of all \emph{arithmetic progressions} on the set
$\{1,2,\ldots,n\}$. The discrepancy of $\AP$ was considered 
in a classical paper of  Roth \cite{KFR}, who proved an
$\Omega(n^{1/4})$ lower bound. A matching upper bound of
$O(n^{1/4})$ was obtained in \cite{ms-dap-96},
after previous weaker results by several authors.

First we present a quick way of obtaining the slightly worse upper bound
of $O(n^{1/4}\log n)$.
Let $\AP_d\subseteq\AP$ consist of all arithmetic
progressions with difference exactly $d$.
Obviously $\herdisc\AP_d\le 1$ for all $d$, and so
$\enorm{\AP_d}=O(\log n)$ by (A).  
Let us set $\AP_{\ge s}:=\bigcup_{d\ge s}\AP_d$.
Since all sets in $\AP_{\ge s}$ have size at most $n/s$, we 
have $\enorm{\AP_{\ge s}}=O(\sqrt{n/s}\,)$ by (B$'$). So, for every $s$,
$\enorm{\AP}^2\le \sum_{d=1}^{s-1}\enorm{\AP_d}^2+
\enorm{\AP_{\ge s}}^2=O(s\log^2 n+n/s)$ according to (C).
 Minimizing with $s:=\sqrt n/\log n$ gives $\enorm\AP=O(n^{1/4}\sqrt{\log n})$,
and thus $\disc\AP=O(n^{1/4}\log n)$ by~(A).

A more careful analysis, combining the ideas above with
the canonical intervals trick, shows an asymptotically
optimal bound for $\enorm{\AP}$, which in turn implies
 $\herdisc\AP =O(n^{1/4}\sqrt{\log n}\,)$, a better but still
suboptimal bound.

\begin{prop}\label{p:AP}
  $\enorm{\AP} =\Theta(n^{1/4})$.
\end{prop}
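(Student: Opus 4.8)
The plan is to establish the two bounds $\enorm{\AP} = O(n^{1/4})$ and $\enorm{\AP} = \Omega(n^{1/4})$ separately, with the upper bound being the substantive new ingredient (the lower bound should follow from the known $\Omega(n^{1/4})$ discrepancy bound of Roth together with property~(A)). For the upper bound, I would refine the argument sketched just before the proposition. As there, split $\AP = \bigl(\bigcup_{d<s}\AP_d\bigr)\cup\AP_{\ge s}$. The large-difference part is handled by the size bound~(B$'$): every arithmetic progression with difference $d \ge s$ has at most $n/s$ elements, so $\enorm{\AP_{\ge s}} = O(\sqrt{n/s})$. The loss in the crude argument came from bounding $\enorm{\AP_d}$ by $O(\log n)$ via~(A) and then summing $s$ such terms. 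The fix is the \emph{canonical intervals trick}: for each fixed difference $d$, the progressions in $\AP_d$ living on a fixed residue class mod $d$ form (an isomorphic copy of) the system $\II_{\lceil n/d\rceil}$ of initial segments, and by~(D) the different residue classes sit on disjoint ground sets, so $\enorm{\AP_d} = \enorm{T_{\lceil n/d\rceil}} = \Theta(\log(n/d))$ by Proposition~\ref{p:ints} and Lemma~\ref{l:disjsupp}. Then, using~(C) across $d = 1,\dots,s-1$,
\[
\enorm{\AP}^2 \le \sum_{d=1}^{s-1}\enorm{\AP_d}^2 + \enorm{\AP_{\ge s}}^2
= O\!\Bigl(\sum_{d=1}^{s-1}\log^2(n/d) + n/s\Bigr) = O(s\log^2 s + n/s).
\]
Choosing $s := \sqrt{n}/\log n$ does not immediately give $O(\sqrt n)$; the honest optimization of $s\log^2 s + n/s$ gives only $O(\sqrt n\,\log^{?}n)$, so this naive split is still lossy.

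The real improvement comes from decomposing \emph{each} $\AP_d$ into canonical pieces rather than treating it atomically. Concretely, for a dyadic scale $\ell = 2^j$, let $\AP_{d,\ell}$ be the canonical-interval system consisting, on each residue class mod $d$, of the $\Theta(n/(d\ell))$ disjoint blocks of $\ell$ consecutive terms of that progression. Every set of $\AP_d$ is a disjoint union of $O(\log(n/d))$ canonical blocks, one per dyadic scale, so by~(E), $\enorm{\AP_d} = O(\log(n/d))$ recovers the earlier bound; but the point is to reorganize the \emph{global} union over all $d$ by grouping canonical blocks of comparable size. Let $\BB_L$ be the system of all canonical blocks (over all differences $d$, all residue classes, all scales) whose number of terms is exactly $L = 2^j$. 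Each set of $\AP$ decomposes into $O(\log n)$ pieces, at most one from each $\BB_L$, so by~(E) it suffices to bound $\sum_L \enorm{\BB_L}$. For a fixed block-length $L$, the blocks in $\BB_L$ all have size $L$, giving $\enorm{\BB_L} = O(\sqrt L)$ by~(B$'$); but for large $L$ there are few distinct differences $d \lesssim n/L$ contributing, and within a fixed $d$ the blocks of length $L$ on a fixed residue class are pairwise disjoint — so using (D) to glue disjoint ground sets and the Lemma~\ref{l:union}-type bound~(C) across the (few) differences, one gets $\enorm{\BB_L} = O(\min\{\sqrt L,\ \log n \cdot \text{(something summable)}\})$. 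The two regimes cross over at $L \approx \sqrt n$, and summing $\sum_{L=2^j \le \sqrt n} \enorm{\BB_L}$ should telescope to $O(\sqrt[4]{n})$ — the dyadic sum of $\sqrt L$ up to $\sqrt n$ is dominated by its largest term $n^{1/4}$, and the tail $L > \sqrt n$ contributes a geometrically decaying (in $\log n$, polynomially decaying in $L^{-1/2}\cdot n^{?}$) amount.

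\textbf{Main obstacle.} The delicate point is the bookkeeping in the last paragraph: exactly which sub-collections of canonical blocks can be certified to have disjoint ground sets (so that~(D) applies with no loss) versus which must be combined via~(C) (paying a $\sqrt{\#\text{pieces}}$ factor) versus~(E) (paying a linear $\#\text{pieces}$ factor). The naive application of~(E) across all $\Theta(\log n)$ scales for each of $\Theta(n)$ differences, followed by~(C), loses polylog factors and lands at $n^{1/4}\sqrt{\log n}$ — which is the "better but still suboptimal" bound the text mentions is obtainable by "a more careful analysis." Pushing all the way to $\Theta(n^{1/4})$ requires that at the dominant scale $L \approx \sqrt n$ one uses disjointness, not just the triangle inequality, to collect the $\Theta(\sqrt n)$ residue-class blocks of a single difference into one system of $\gamma_2$-norm $O(1)$ rather than $O(\sqrt{\log n})$; symmetrically, at that scale only $O(\sqrt n)$ differences survive, and these must be combined so that their contributions add in $\ell_2$ (via~(C)) rather than in $\ell_1$. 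Verifying that this careful split saturates the $\Omega(n^{1/4})$ lower bound — i.e.\ that no further $\sqrt{\log n}$ leaks out — is where the proof earns its keep; for the lower bound itself, once the tight upper bound is not needed, $\enorm{\AP} \ge \herdisc{\AP}/O(\log n)$ is too weak, so one instead invokes Theorem~\ref{t:e8dual} directly with a suitable diagonal $P,Q$ (e.g.\ uniform, restricted to a good arithmetic-progression subsystem) to extract a nuclear-norm bound of $\Omega(n^{1/4})$, paralleling Roth's $L^2$ argument.
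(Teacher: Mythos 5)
Your upper-bound strategy is a genuinely different route from the paper's, and it does work. The paper decomposes the ambient set $[n]$ into canonical intervals $I$ and bounds $\enorm{\MM_I}$, the $\gamma_2$ norm of the system of maximal progressions inside $I$, by $\sqrt 2\,|I|^{1/4}$ via a small-difference/large-difference split; at a given level $\MM_i$, sets all have \emph{span} at most $2^i$ but varying cardinality. You instead fix the \emph{cardinality} $L=2^j$ of the canonical blocks and let the span vary with $d$. The two decompositions are dual in flavor, and yours has the mild advantage of avoiding the somewhat nonstandard object ``maximal APs in $I$.'' Your lower-bound sketch---uniform diagonal $P,Q$ in Theorem~\ref{t:e8dual} applied to a matrix coming from Roth's/Lov\'asz's $L^2$ eigenvalue argument---is essentially what the paper does.

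However, you leave a gap where none is needed, and you yourself nearly close it in the ``main obstacle'' paragraph. The correct intermediate bound is not $\enorm{\BB_L}=O(\min\{\sqrt L,\,\log n\cdot(\text{something})\})$ but the clean
\[
\enorm{\BB_L}\ \le\ \min\bigl\{\sqrt L,\ O(\sqrt{n/L}\,)\bigr\},
\]
with no logarithms. For a fixed difference $d$, every two canonical blocks of length $L$ with difference $d$ are disjoint as subsets of $[n]$: within a single residue class modulo $d$ the blocks tile it, and distinct residue classes are themselves disjoint. Hence the incidence matrix of $\BB_{L,d}$ has at most one $1$ per column, so $\enorm{\BB_{L,d}}\le 1$ by (B) with $t=1$. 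A length-$L$ block with difference $d$ occupies span $(L-1)d+1\le n$, so only $d\le n/(L-1)$ contribute, and (C) across these $O(n/L)$ systems gives $\enorm{\BB_L}=O(\sqrt{n/L}\,)$; combined with $\sqrt L$ from (B$'$), the dyadic sum $\sum_j\min\{2^{j/2},\sqrt n\,2^{-j/2}\}$ is geometric on both sides of the crossover $L\approx\sqrt n$ and totals $O(n^{1/4})$. One small correction: a general AP decomposes into at most \emph{two} canonical blocks per scale, not one, but this is only a constant factor in (E). In short, the $\sqrt{\log n}$ you worried about leaking never appears.
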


\begin{proof}
  The lower bound $\enorm{\AP} = \Omega(n^{1/4})$ is implied by the
 Lov\'asz' proof of Roth's $1/4$-theorem  using eigenvalues;
  see \cite{bs-dt-95} or \cite[Sec.~1.5]{Chazelle-book}. That proof provides
 a square matrix $\tilde{A}$ 
in which each row is the sum of the indicator vectors of at
  most two disjoint arithmetic progressions in $[n]$, and such that
the smallest singular value $\sigma_{\min}$
of $\tilde{A}$ is of order $\Omega(n^{1/4})$.

  By the triangle inequality (and since removing rows does not increase
$\gamma_2$), we have $\enorm{\AP} \geq \frac12 \enorm{\tilde{A}}$.
Then
$\enorm{\tilde A}\ge \frac1n \|\tilde{A}\|_* \geq \sigma_{\min}=
\Omega(n^{1/4})$, which proves the lower bound.

Next, we do the upper bound.
For an interval $I\subseteq[n]$, let $\MM_I$ be the set of
  all inclusion-maximal arithmetic progressions in $I$.  We claim that
  \begin{equation}
    \label{e:maxAP-bound}
    \enorm{\MM_I}\leq \sqrt{2}|I|^{1/4}.
  \end{equation}

Before proving \eqref{e:maxAP-bound}, let us see why it
  implies $\enorm{\AP} = O(n^{1/4})$. We
 recall that a binary canonical interval of size
  $2^i$ is an interval of the form $I = [a2^i,(a+1)2^i)\cap [n]$,
  where $a$ and $i$ are natural numbers. Let $\MM_i$ be the union of
  the set systems $\MM_I$ over all canonical intervals $I$ of size
  $2^i$. Since $\MM_i$ is a union of set systems with 
disjoint supports, by (D) and~\eqref{e:maxAP-bound},
 $\enorm{\MM_i} \leq 2^{\frac{i}{4} + \frac12}$.

Every arithmetic progression in $[n]$
can be written as the disjoint union of arithmetic
  progressions from $\MM_0, \ldots, \MM_{k}$, $k = \lfloor \log_2 n
  \rfloor$, so that at most two maximal arithmetic progressions from
  each $\MM_i$ are taken. Property (E) then gives
 $ \enorm{\AP} \leq \sum_{i = 0}^k{2\cdot 2^{\frac{i}{4} + \frac12}} =
  O(n^{1/4})$. 

It remains to prove \eqref{e:maxAP-bound}. Let
  us split $\MM_I$ as $\MM_I'\cup\MM''_I$,
 where the arithmetic progressions in $\MM_I'$ have
  difference at most $|I|^{1/2}$, and those in $\MM_I''$
  have difference larger than $|I|^{1/2}$. 

Given a difference $d$, each  $c \in I$ belongs to exactly one maximal 
arithmetic progression with difference $d$, 
because such an arithmetic progression is entirely
 determined by the congruence class of $c \bmod d$. Therefore, each
  integer in $I$ belongs to at most $|I|^{1/2}$ arithmetic
  progressions in $\MM_I'$,  and, by (B), $\enorm{\MM_I'} \leq |I|^{1/4}$. 

On  the other hand, every arithmetic progression in $\MM_I''$ has size
 at most $|I|^{1/2}$, and so, by (B$'$), $\enorm{\MM_I''} \leq |I|^{1/4}$
  as well. Since $\MM_I = \MM_I' \cup \MM_I''$, we have $\enorm{\MM_I}
  \leq \sqrt{2}|I|^{1/4}$ as desired.
\end{proof}

\heading{Multidimensional arithmetic progressions.}  Doerr, Srivastav, and
Wehr \cite{Doerr-al-productdisc} considered the discrepancy of the
system $\AP^d$ of \emph{$d$-dimensional
arithmetic progressions} in $[n]^d$, which are $d$-fold Cartesian products of
arithmetic progressions. They showed that $\disc \AP^d=\Theta(n^{d/4})$. 

Their upper bound was done by a simple product coloring argument,
which does not apply to hereditary discrepancy (since the restriction
of $\AP^d$ to a subset of $[n]^d$ no longer has the structure of
multidimensional arithmetic progressions). By
Proposition~\ref{p:AP} and (F) we have
$\enorm{\AP^d} = \Theta(n^{d/4})$, and we thus obtain the
(probably suboptimal) upper 
bound $\herdisc \AP^d=O(n^{d/4}\sqrt{\log n}\,)$.

The lower bound in \cite{Doerr-al-productdisc} uses a nontrivial
Fourier-analytic argument. Here we observe that it also follows
from Lov\'asz' lower bound proof for $\disc\AP$ mentioned above,
and a product argument. Indeed,  the $d$-fold Kronecker 
product $\tilde{A}^{\otimes d}$ of the matrix
$\tilde{A}$ as in the proof of Proposition~\ref{p:AP}
has the smallest singular value $\sigma_{\min}^d = \Omega(n^{d/4})$
for every fixed $d$, and each
of its rows is the indicator vector of the disjoint union of at most
$2^d$ sets of $\AP^d$. So $\disc \AP^d
\geq 2^{-d} \disc \tilde{A}^{\otimes d} = \Omega(n^{d/4})$,
where the final equality is by the well-known fact that the
smallest singular value is a lower bound on the discrepancy of 
a square matrix (see  \cite[Sec.~4.2]{m-gd} or
\cite[Sec.~1.5]{Chazelle-book}).

%

\subsection*{Acknowledgments}

We would like to thank Alan Edelman and Gil Strang for invaluable
advice concerning the singular values of the matrix in
Proposition~\ref{p:ints}, and Van Vu for recommending the right
experts for this question. We would also like to thank Noga Alon and
Assaf Naor for pointing out that the geometric quantity
in~\cite{NT,e8-tusnady} is equivalent to the $\gamma_2$ norm. We also
thank Imre B\'ar\'any and Vojt\v{e}ch T\accent23uma for useful
discussions.

\bibliographystyle{alpha}
\bibliography{cg}

\end{document}